\newtheorem{theorem}{Theorem}[subsection]
\newtheorem{corollary}[theorem]{Corollary}
\newtheorem{lemma}[theorem]{Lemma}
\newtheorem{proposition}[theorem]{Proposition}
\newtheorem{definition}[theorem]{Definition}
\numberwithin{equation}{section}
\theoremstyle{definition}
\newtheorem{remark}[theorem]{Remark}
\newtheorem{problem}[theorem]{Problem}
\newcommand{\F}{\mathrm{F}}
\newcommand{\LL}{\mathrm{L}}
\newcommand{\Cu}{\mathrm{Cu}}
\newcommand{\CCu}{\mathbf{Cu}}
\newcommand{\Lsc}{\mathrm{Lsc}}
\renewcommand{\epsilon}{\varepsilon}
\renewcommand{\leq}{\leqslant}
\renewcommand{\geq}{\geqslant}
\newcommand{\N}{\mathbb{N}}
\newcommand{\Q}{\mathbb{Q}}
\newcommand{\R}{\mathbb{R}}
\begin{document}
\author{Leonel Robert}

\address{Department of Mathematics\\ 
University of Louisiana at Lafayette\\
Lafayette, USA.}
\email{lrobert@louisiana.edu}

\keywords{Cuntz semigroup, 2-quasitraces, refinement and interpolation properties, projectionless C*-algebra}
\subjclass[2000]{}

\title[The cone of functionals on the Cuntz semigroup]{The cone of functionals on the Cuntz semigroup}

\thanks{I was supported by the Danish National Research Foundation (DNRF) through the Center for Symmetry and Deformation while conducting this research.}

\begin{abstract}
The functionals on an ordered semigroup $S$ in the category $\CCu$--a category to which the Cuntz semigroup of a C*-algebra naturally belongs--are  investigated. After appending a new axiom to the category $\CCu$, it is shown that  the ``realification" $S_\R$ of $S$ has the same  functionals as $S$ and, moreover, is recovered functorially from the cone of functionals of $S$.  Furthermore, if $S$ has a weak Riesz decomposition property, then  $S_\R$ has refinement and interpolation
properties which imply that the cone of functionals on $S$ is a
complete distributive lattice. These results apply to the Cuntz semigroup of a C*-algebra. At the level of C*-algebras, the operation of realification is matched by tensoring with a certain stably projectionless C*-algebra. 
\end{abstract}

\maketitle

\section{Introduction}
From its introduction in \cite{cuntz}, the Cuntz semigroup of a C*-algebra has been understood as a natural carrier of the dimension functions 
of the C*-algebra: they correspond to functionals on the Cuntz semigroup. 
In \cite{cei}, Coward, Elliott and Ivanescu define the category
$\CCu$ and show that the Cuntz semigroup  of a C*-algebra is an object in this category. The idea comes to mind to study functionals on ordered semigroups in the axiomatic setting of $\CCu$ 
and attempt to recover (and push further!) known results in the C*-algebraic context.  
Such a study was done partly in \cite{ers} and \cite{radius} and is continued here.
 
Our starting point is an ordered semigroup  $S$ in the category $\CCu$. However, in order to make
progress on questions regarding the functionals on $S$, we need to assume that  $S$ also has the almost algebraic order property (see axiom O5 in Subsection \ref{kk21} below).
For the Cuntz semigroup of a C*-algebra, this property was proven in \cite[Lemma 7.2]{rordam-winter} and
it was  also used repeatedly in the arguments of \cite{robert-rordam}. The  results of this paper stress further its importance  (see Remark \ref{messedup} below). 


Assume that $S$ is in the category $\CCu$ and  has almost algebraic order. 
Denote by $\F(S)$ the cone of functionals
on $S$ (topologized as in \cite{ers}). Each $s\in S$ induces a function on $\F(S)$:
$\hat s(\lambda):=\lambda(s)$ for all $\lambda\in \F(S)$.
Two natural questions that can be asked are 
\begin{enumerate}[(i)]
\item
what can we say about $s,t\in S$ if $\hat s=\hat t$?, 

\item
what can we say about the range of the map $s\mapsto \hat s$? 
\end{enumerate}
The first question is answered in Proposition \ref{stcomparison} below. 
Regarding the second question, we consider a set larger than the range of the map $s\mapsto \hat s$; namely, the 
closure (under sequential suprema) of the $\R^+$-linear span of  the range of $s\mapsto \hat s$. This set, denoted by $S_\R$, 
may also be characterized as the ``realification" of $S$ and is the main focus of the results of this paper.
It will be shown  that $S_\R$ can be recovered functorially from $\F(S)$ as a suitable dual of $\F(S)$. 
If we assume further that $S$ has a weak  decomposition property (\`a la Riesz),  
then $S_\R$ satisfies a refinement property  which in turn implies that $\F(S)$ is a complete lattice. 

Our results are applicable to C*-algebras. 
At the level of C*-algebras, the operation of ``realification"
is matched by tensoring with the stably projectionless C*-algebra $\mathcal R$ studied in \cite{jacelon}
and \cite{remarks}. That is,
$\Cu(A)_\R\cong \Cu(A\otimes \mathcal R)$, where $\Cu(A)$ denotes the Cuntz semigroup of the C*-algebra $A$. Since $\F(\Cu(A))$ is in bijection with the lower semicontinuous 2-quasitraces on $A$, it follows that the Cuntz semigroup of an $\mathcal R$-absorbing C*-algebra is determined by its cone of  lower semicontinuous  2-quasitraces.
$\Cu(A)$ has the weak Riesz decomposition property mentioned above. Thus, the lower semicontinuous 2-quasitraces on $A$ form a complete lattice.
This extends Blackadar and Handelmann's \cite[Theorem II.4.4]{blackadar-handelmann} that the bounded 2-quasitraces of a unital C*-algebra $A$ form a lattice.

In Section \ref{prelims} we prove some  preliminary results on ordered semigroups and we answer question (i) above. In Section \ref{SR} we define $S_\R$ and show that it is recovered
functorially  as a dual space of  $\F(S)$. In Section \ref{riesz} we prove refinement and interpolation
properties for $S_\R$ and derive from these that $\F(S)$ is a complete lattice. The last section
contains the results relating to the Cuntz semigroups of C*-algebras. In the last paragraphs we give further  evidence of the relevance of the properties of almost algebraic order and weak Riesz decomposition  by showing that Glimm's halving property for non-type I
simple C*-algebras is recovered, in the context of ordered semigroups, using these properties.

\proof[Acknowledgements]
This research was conducted while I was a member of the Center for Symmetry and Deformation
at the University of Copenhagen. I am grateful to the Center, and in particular to Mikael R\o rdam, for their hospitality and support. The case that $A$ is commutative of the isomorphism $\Cu(A)_\R\cong \Cu(A\otimes \mathcal R)$ can be derived using the methods of \cite{tikuisis}. I am grateful to Aaron Tikuisis for pointing this out as evidence of the validity of the general result.  

\section{Preliminaries on ordered semigroups}\label{prelims}
We call ordered semigroup a monoid endowed with a translation invariant order relation. We always assume that the semigroup is abelian
and positive, i.e., $0$ is the smallest element of the ordered semigroup.
By ordered semigroup map we understand one that preserves the order, the addition operation, and the 0 element.

\subsection{The category $\CCu$.}\label{kk21}
Given elements in an ordered set $s$ and $t$, we say that  
$s$ is sequentially compactly contained in $t$, and denote it by
$s\ll t$, if for any increasing sequence $(t_n)$
such that $t\leq \sup_n t_n$ we have $s\leq t_{n_0}$ for some $n_0\in \N$. (We will often drop the reference to sequences and simply say that  $s$ is compactly contained in $t$.)

The objects of the category $\CCu$---introduced in \cite{cei}--are ordered semigroups  satisfying a number of axioms. The ordered semigroup $S$ is an object of $\CCu$ if 
\begin{enumerate}
\item[\textbf{O1.}]
Every increasing sequence has a supremum.

\item[\textbf{O2.}]
For every $s\in S$ there exists a sequence $(s_n)$ such that $s_n\ll s_{n+1}$ for all $n$ and $s=\sup_n s_n$.

\item[\textbf{O3.}]
If $s_i\ll t_i$, for $i=1,2$, then $s_1+s_2\ll t_1+t_2$.

\item[\textbf{O4.}]
If $(s_n)$ and $(t_n)$ are increasing sequences then $\sup_n(s_n+t_n)=\sup_n s_n+\sup_n t_n$.
\end{enumerate}
The primary example of an ordered semigroup in the category $\CCu$ is the Cuntz semigroup of a 
C*-algebra. That such an object satisfies the axioms O1-O4 is proven in \cite[Theorem 1]{cei}.

We will also consider the property of almost algebraic order:
\begin{enumerate}
\item[\textbf{O5.}]
If $s'\ll s\leq t$ then there exists $r$ such that $s'+r\leq t\leq s+r$.
\end{enumerate}
It is proven in \cite[Lemma 7.2]{rordam-winter} that the Cuntz semigroup of a C*-algebra satisfies O5.

A sequence $(s_n)$ such that $s_n\ll s_{n+1}$ for all $n$ is called rapidly increasing. 
Thus, O2 may be restated as saying that every element is the supremum of a rapidly increasing sequence.

A subset $S'\subseteq S$ is called dense if every element of $S$ is the supremum of a rapidly increasing sequence of elements
in $S'$. If a  C*-algebra is separable, then its Cuntz semigroup  has a countable dense subset (see Proposition \ref{Cuaxioms} below).

\subsection{Functionals.}
We call an ordered semigroup map $\lambda\colon S\to [0,\infty]$ a functional on $S$ if 
it preserves the suprema of increasing sequences.
The collection of all functionals on $S$ forms a cone that we denote by $\F(S)$
(addition and scalar multiplication are defined pointwise). 

\begin{lemma}\label{regularization}
Let $S$ be an ordered semigroup in the category $\CCu$. Let $\lambda\colon S\to [0,\infty]$ be additive and order preserving. Then $\tilde\lambda(s):=\sup_{s'\ll s} \lambda(s')$ is a functional on $S$. (We call $\tilde\lambda$ the supremum preserving regularization of $\lambda$.)
\end{lemma}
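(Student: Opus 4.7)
The plan is to verify the four defining properties of a functional for $\tilde\lambda$: it sends $0$ to $0$, preserves order, is additive, and preserves suprema of increasing sequences. The first two are essentially free. Clearly $\tilde\lambda(0)=\lambda(0)=0$. For order preservation, note that if $s\leq t$ then any $s'\ll s$ is also $\ll t$ (since any increasing sequence above $t$ lies above $s$), so $\tilde\lambda(s)\leq \tilde\lambda(t)$ by taking the supremum of $\lambda(s')$ over a smaller indexing set.

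For additivity, I would split into two inequalities. The inequality $\tilde\lambda(s)+\tilde\lambda(t)\leq \tilde\lambda(s+t)$ is immediate from axiom O3: given $s'\ll s$ and $t'\ll t$, O3 gives $s'+t'\ll s+t$, and since $\lambda$ is additive we have $\lambda(s')+\lambda(t')=\lambda(s'+t')\leq \tilde\lambda(s+t)$; now take supremum over $s',t'$. For the reverse inequality, I would invoke O2 and O4: write $s=\sup_n s_n$ and $t=\sup_n t_n$ with both sequences rapidly increasing, so that by O4, $s+t=\sup_n(s_n+t_n)$. Any $r\ll s+t$ then satisfies $r\leq s_{n_0}+t_{n_0}$ for some $n_0$, whence
\[
\lambda(r)\leq \lambda(s_{n_0})+\lambda(t_{n_0})\leq \tilde\lambda(s)+\tilde\lambda(t),
\]
where the last inequality uses $s_{n_0}\ll s_{n_0+1}\leq s$ (so $s_{n_0}\ll s$) and the analogous fact for $t$. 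Taking the supremum over $r\ll s+t$ gives $\tilde\lambda(s+t)\leq \tilde\lambda(s)+\tilde\lambda(t)$.

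The main step is showing that $\tilde\lambda$ preserves suprema of increasing sequences; this is where the axioms really do the work. Let $(s_n)$ be increasing with supremum $s$. The inequality $\sup_n\tilde\lambda(s_n)\leq \tilde\lambda(s)$ follows at once from order preservation. For the reverse, the key ingredient is the interpolation property of $\ll$, which follows from O2: given $s'\ll s$, apply O2 to $s$ to produce a rapidly increasing sequence $(u_k)$ with supremum $s$; then $s'\leq u_{k_0}$ for some $k_0$, and combined with $u_{k_0}\ll u_{k_0+1}$ this gives $s'\ll u_{k_0+1}\ll s$. Thus we can pick $s''$ with $s'\ll s''\ll s$. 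Since $s''\ll s=\sup_n s_n$, we have $s''\leq s_{n_0}$ for some $n_0$, hence $s'\ll s_{n_0}$ and
\[
\lambda(s')\leq \tilde\lambda(s_{n_0})\leq \sup_n\tilde\lambda(s_n).
\]
Taking the supremum over $s'\ll s$ yields $\tilde\lambda(s)\leq \sup_n\tilde\lambda(s_n)$, completing the proof. The subtlety worth flagging is that one cannot simply use $s'\ll s$ and $s_n\uparrow s$ to conclude $s'\ll s_{n_0}$ directly; the intermediate interpolant $s''$ (produced via O2) is essential.
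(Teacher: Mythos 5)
Your proof is correct. The paper does not actually write out an argument for this lemma---it defers to \cite[Lemma 4.7]{ers}---and your verification is the standard one that that reference uses: additivity via O3 for one inequality and O2/O4 for the other, and supremum-preservation via interpolating $s'\ll s''\ll s$ using a rapidly increasing sequence from O2. You also correctly isolate the two points the paper itself flags or that are easy to fumble: the order-preservation hypothesis on $\lambda$ is genuinely needed (in your step $\lambda(r)\leq\lambda(s_{n_0}+t_{n_0})$), and the interpolant $s''$ cannot be skipped.
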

\begin{remark}
The above lemma is  proven in \cite[Lemma 4.7]{ers}.
Notice, however,  that the hypothesis that $\lambda$ is order preserving is not included in the statement
of \cite[Lemma 4.7]{ers}, although it is tacitly assumed in the proof.
\end{remark}

Let us now show that the 
pointwise order in $\F(S)$ coincides with the algebraic order if $S$ is in the category $\CCu$ and has almost algebraic order. 
\begin{proposition}
Let $S$ be an ordered semigroup satisfying the axioms \emph{O1-O5}.
Let $\alpha$ and $\beta$ be functionals on $S$.
Then $\alpha(s)\leq \beta(s)$ for all $s\in S$ if and only if  there exists a functional $\gamma$
such that $\alpha+\gamma=\beta$.
\end{proposition}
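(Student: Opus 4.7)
The reverse implication is immediate from positivity: $\alpha(s) \leq \alpha(s) + \gamma(s) = \beta(s)$. For the forward direction, my plan is to construct $\gamma$ as a kind of regularized difference of $\beta$ and $\alpha$. Concretely, I would set
\[
\gamma(s) := \sup\{\beta(s') - \alpha(s') : s' \ll s, \ \alpha(s') < \infty\},
\]
with the convention that $\beta(s') - \alpha(s') = +\infty$ when $\beta(s') = \infty$. This is a well-defined $[0,\infty]$-valued, order-preserving map sending $0$ to $0$; its preservation of suprema of rapidly increasing sequences $s_n \uparrow s$ follows from the standard fact that every $s' \ll s$ satisfies $s' \ll s_N$ for some $N$.

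The first step is to verify $\alpha(s) + \gamma(s) = \beta(s)$. Off the order-ideal $J := \{s : \alpha(s) < \infty\}$ both sides are $\infty$ and the identity is automatic. On $J$ one needs the equality $\gamma(s) = \beta(s) - \alpha(s)$, whose upper bound follows from the monotonicity of $\beta - \alpha$ on $J$ with respect to $\ll$: given $s'' \ll s' \leq s$ in $J$, applying O5 to $s'' \ll s' \leq s$ produces $r$ with $s'' + r \leq s$ and $s \leq s' + r$, and the inequalities $\beta(s'') + \beta(r) \leq \beta(s)$, $\alpha(s) \leq \alpha(s') + \alpha(r)$, together with $\beta(r) \geq \alpha(r)$, combine to give $\beta(s'') - \alpha(s') \leq \beta(s) - \alpha(s)$; after a supremum this becomes $\beta(s') - \alpha(s') \leq \beta(s) - \alpha(s)$. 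The matching lower bound $\gamma(s) \geq \beta(s) - \alpha(s)$ comes from evaluating at a rapidly increasing sequence $s_n \uparrow s$ in $J$.

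The principal obstacle is the additivity of $\gamma$. Superadditivity follows from O3 applied independently to $s' \ll s$ and $t' \ll t$ in $J$, together with additivity of $\beta - \alpha$ on $J$. The delicate direction is subadditivity, where O5 is used in a second, decisive way. Given $u \ll s + t$ with $\alpha(u) < \infty$ and any $v \ll u$, apply O5 to the chain $v \ll u \leq s + t$ to produce $r$ satisfying $v + r \leq s + t \leq u + r$. The resulting inequalities
\[
\beta(v) + \beta(r) \leq \beta(s) + \beta(t), \qquad \alpha(s) + \alpha(t) \leq \alpha(u) + \alpha(r),
\]
combined with $\beta(r) \geq \alpha(r)$, give $\beta(v) - \alpha(u) \leq [\beta(s) - \alpha(s)] + [\beta(t) - \alpha(t)]$ in the finite-arithmetic case; taking the supremum over $v \ll u$ and then over $u$ delivers $\gamma(s+t) \leq \gamma(s) + \gamma(t)$. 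The case where some $\alpha$- or $\beta$-values are infinite is either trivial (when $\gamma(s) + \gamma(t) = \infty$) or reduced to the finite case by first approximating $s, t$ from below by elements $s^* \ll s$, $t^* \ll t$ in $J$ with $u \leq s^* + t^*$, and running the argument with $s^*, t^*$ in place of $s, t$. Thus O5 is the essential new ingredient beyond O1--O4, appearing both in the monotonicity of $\beta - \alpha$ on $J$ and in the construction of the complement $r$ needed for subadditivity.
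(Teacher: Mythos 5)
Your reverse implication and your overall strategy (build $\gamma$ as a regularized difference of $\beta$ and $\alpha$, with O5 as the essential new ingredient) are sound, and several pieces of the verification are correct: the identity $\alpha+\gamma=\beta$, the monotonicity of $\beta-\alpha$ on $J$ via O5, preservation of suprema, and superadditivity. The gap is in subadditivity, precisely in the case you defer to the end. When, say, $\alpha(s)=\infty$ but $\gamma(s)+\gamma(t)<\infty$, you claim one can find $s^*\ll s$ and $t^*\ll t$ \emph{in $J$} with $u\leq s^*+t^*$. From $u\ll s+t$ and O4 you do get $s^*\ll s$, $t^*\ll t$ with $u\leq s^*+t^*$, but nothing forces $\alpha(s^*),\alpha(t^*)<\infty$: arranging that would require decomposing $u$ (which lies in $J$) into pieces dominated by $s$ and by $t$, i.e., a Riesz-type decomposition relative to the ideal $J$. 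That is essentially axiom O6, which is not available here (the paper introduces it only later, in Section \ref{riesz}). Nor does the O5 route close the case directly: applying O5 to $v\ll u\leq s^*+t^*$ yields $r$ with $v+r\leq s^*+t^*\leq u+r$, but if $\alpha(s^*)=\infty$ then $\alpha(r)=\infty$, hence $\beta(r)=\infty$, and the inequality $\beta(v)+\beta(r)\leq\beta(s^*)+\beta(t^*)$ carries no information. The danger is real: if the only element $s'\ll s$ with $\alpha(s')<\infty$ is $0$ (and likewise for $t$), your $\gamma$ gives $\gamma(s)=\gamma(t)=0$, while an element $u\ll s+t$ with $\alpha(u)<\infty<\beta(u)-\alpha(u)$ or merely $\beta(u)>\alpha(u)$ would force $\gamma(s+t)>0$.

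The paper avoids this entirely by putting the dichotomy on $\beta$ at the point $s$ itself rather than taking a supremum over the ideal where $\alpha$ is finite: define $\gamma(s)=\beta(s)-\alpha(s)$ when $\beta(s)<\infty$ and $\gamma(s)=\infty$ otherwise. Additivity is then immediate, because $\beta(s+t)<\infty$ if and only if both $\beta(s)$ and $\beta(t)$ are finite (and then $\alpha(s),\alpha(t)$ are finite as well, since $\alpha\leq\beta$); O5 is needed only to prove that this $\gamma$ is order preserving; and one passes to the supremum-preserving regularization $\tilde\gamma(s)=\sup_{s'\ll s}\gamma(s')$ only at the very end, checking that $\alpha+\tilde\gamma=\beta$ survives regularization. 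Note the crucial difference from your definition: the regularization keeps the elements $s'\ll s$ on which the difference is not defined and assigns them the value $\infty$, whereas you drop them from the supremum; it is exactly this exclusion that breaks additivity. If you replace your $\gamma$ by $\sup_{s'\ll s}\gamma_0(s')$ with $\gamma_0(s')=\beta(s')-\alpha(s')$ for $\beta(s')<\infty$ and $\gamma_0(s')=\infty$ otherwise, your argument reduces to the paper's.
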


\begin{proof}
Define $\gamma\colon S\to [0,\infty]$ by
\[
\gamma(s)=\left\{
\begin{array}{cl}
\beta(s)-\alpha(s) & \hbox{ if }\beta(s)<\infty\\
\infty& \hbox{ otherwise.}
\end{array}\right.
\]
It is easy to check that $\gamma$ is additive. Let us show that it is also order preserving. Let $s,t\in S$
be such that $s\leq t$. If $\beta(t)=\infty$ then $\gamma(t)=\infty$ and clearly  $\gamma(s)\leq \gamma(t)$.
Assume that $\beta(t)<\infty$. Since $\sup_{s'\ll s} \beta(s')=\beta(s)<\infty$, for any given $\epsilon>0$ there exists $s'\ll s$ such that
$\beta(s)\leq \beta(s')+\epsilon$. By O5 there exists $r\in S$ such that
$s'+r\leq t\leq s+r$. Then,
\[
\gamma(t)=\beta(t)-\alpha(t)\geq \beta(s'+r)-\alpha(s+r)\geq
\beta(s')-\alpha(s)\geq \gamma(s)-\epsilon.
\]
Since $\epsilon$ can be arbitrarily small we get that $\gamma(t)\geq \gamma(s)$.

We have $\alpha+\gamma=\beta$. Passing to the supremum preserving regularizations we get
$\alpha+\tilde\gamma=\beta$.
\end{proof}

\begin{remark}\label{messedup}
It is remarked without proof in  \cite{ers}--after the proof of \cite[Lemma 4.7]{ers}--that the above proposition is true for ordered semigroups in the category $\CCu$. It is not  presently clear to me whether this is the case. Observe that in the above proof we have made use  of the axiom O5 (i.e., the property of almost algebraic order). Since \cite[Theorem 4.8]{ers} relies on this fact,  the hypothesis that the ordered semigroups have almost algebraic order must be appended to the statement of \cite[Theorem 4.8]{ers}.
\end{remark}

For the remainder of this section $S$ denotes an ordered semigroup satisfying the axioms  O1-O5 (i.e., in the category $\CCu$ and with almost algebraic order).

The cone $\F(S)$ is endowed with the topology such that a net $(\lambda_i)$ converges to $\lambda$ 
if and only if
\begin{align}\label{topology}
\limsup_i \lambda_i(s')\leq \lambda(s)\leq\liminf_i \lambda_i(s) 
\end{align}
for all $s',s\in S$ such that $s'\ll s$. The addition and the scalar multiplication by positive real numbers 
are jointly continuous operations (see \cite[Proposition 3.6]{ers}). By \cite[Theorem 4.8]{ers}, $\F(S)$ is a compact Hausdorff space.   
If $S$ is the Cuntz semigroup of a C*-algebra, then  $\F(S)$ is isomorphic, as a topological cone,
to the cone of lower semicontinuous 2-quasitraces on the C*-algebra (see \cite[Theorem 4.4]{ers}). 

Let us denote by $\Lsc(\F(S))$ the set of functions $f\colon \F(S)\to [0,\infty]$ that are linear and  lower semicontinuous. 
$\Lsc(\F(S))$ is endowed with the order of pointwise comparison  and the operations of pointwise addition and pointwise scalar
multiplication by positive (non-zero) real numbers. Each element
 $s\in S$ induces a function $\hat s\in \Lsc(\F(S))$ defined by 
$\hat s(\lambda)=\lambda(s) \hbox{ for all }\lambda\in \F(S)$.
The map $s\mapsto \hat s$ is additive and preserves sequential suprema (because functionals are additive and preserve sequential suprema)
but may not preserve the relation of compact containment. However, we do have the following lemma. 
\begin{lemma}\label{hatwayb} 
If $s\ll t\in S$ and $\alpha<\beta\in (0,\infty]$ then $\alpha \hat s\ll \beta \hat t$ (here the relation $\ll$ is taken in $\Lsc(\F(S))$).
\end{lemma}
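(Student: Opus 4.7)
The plan is to argue by contradiction, combining a rescaling trick (which exploits linearity of functions in $\Lsc(\F(S))$) with the compactness of $\F(S)$.

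Suppose, toward a contradiction, that $(g_n)$ is an increasing sequence in $\Lsc(\F(S))$ with $\sup_n g_n \geq \beta \hat t$ pointwise but no $g_n$ dominates $\alpha \hat s$. Then for each $n$ there is $\mu_n \in \F(S)$ with $g_n(\mu_n) < \alpha \mu_n(s)$, which forces $\mu_n(s) > 0$. I would first pick $s' \in S$ with $s \ll s' \leq t$, possible by axiom O2 applied to $t$ together with the relation $s \ll t$. Since $\mu_n$ preserves suprema of increasing sequences, $\mu_n(s) = \sup_{s'' \ll s} \mu_n(s'')$, and hence I may select $s''_n \ll s$ with $\mu_n(s''_n) > g_n(\mu_n)/\alpha$.

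The main case is when these $s''_n$ can be chosen so that $\mu_n(s''_n) \in (0, \infty)$. Set $\tilde\mu_n := \mu_n/\mu_n(s''_n) \in \F(S)$; then $\tilde\mu_n(s''_n) = 1$ and, by linearity of $g_n$, $g_n(\tilde\mu_n) < \alpha$. Compactness of $\F(S)$ furnishes a convergent subnet $\tilde\mu_n \to \tilde\mu$. For each fixed $m$, monotonicity of $(g_n)$ gives $g_m(\tilde\mu_n) \leq g_n(\tilde\mu_n) < \alpha$ once $n \geq m$, so lower semicontinuity of $g_m$ forces $g_m(\tilde\mu) \leq \alpha$, whence $\sup_m g_m(\tilde\mu) \leq \alpha$. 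On the other hand, from $\tilde\mu_n(s) \geq \tilde\mu_n(s''_n) = 1$ and $s \ll s'$, the defining topology of $\F(S)$ gives $\tilde\mu(s') \geq \limsup_n \tilde\mu_n(s) \geq 1$, hence $\tilde\mu(t) \geq \tilde\mu(s') \geq 1$. Combining with the hypothesis, $\sup_m g_m(\tilde\mu) \geq \beta \tilde\mu(t) \geq \beta > \alpha$, contradicting the previous bound.

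The main obstacle I anticipate is the edge case in which no $s''_n \ll s$ with $\mu_n(s''_n) > g_n(\mu_n)/\alpha$ has $\mu_n(s''_n)$ finite. In this situation the natural rescaling factor becomes infinite, so the above normalization fails. One must instead exploit the linearity of $g_n$ to constrain $g_n(\mu_n)$, and run a parallel subnet argument in which $\mu_n(s) = \infty$ transfers through the topology to $\mu(t) = \infty$, forcing $\sup_m g_m(\mu) = \infty$ while LSC and monotonicity simultaneously force $g_m(\mu)$ to be bounded; this produces the required contradiction. Careful handling of this edge case is what justifies the strict inequality $\alpha < \beta$ in the statement.
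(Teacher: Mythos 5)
Your route is genuinely different from the paper's. The paper verifies the inclusion $\overline{\{\lambda\in\F(S)\mid \alpha\hat s(\lambda)>1\}}\subseteq\{\lambda\mid \beta\hat t(\lambda)>1\}$ directly from the definition of the topology on $\F(S)$ and then quotes \cite[Proposition 5.1]{ers}, which converts such a level-set inclusion into compact containment in $\Lsc(\F(S))$. You instead reprove that implication from scratch in this instance, via normalization, compactness of $\F(S)$, and a subnet extraction. Your ``main case'' is correct and complete: after rescaling so that $g_n(\tilde\mu_n)<\alpha$ while $\tilde\mu_n(s)\geq 1$, lower semicontinuity of each fixed $g_m$ along the subnet gives $\sup_m g_m(\tilde\mu)\leq\alpha$, while $\tilde\mu(t)\geq 1$ forces $\sup_m g_m(\tilde\mu)\geq\beta>\alpha$. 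This is a self-contained (if longer) argument, at the price of redoing the work that the citation buys the paper.

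The gap is in the edge case, which is not vacuous: a functional such as $\lambda_I$ (zero on an ideal $I$, infinite off it) takes only the values $0$ and $\infty$, so it can happen that every $s''\ll s$ with $\mu_n(s'')>g_n(\mu_n)/\alpha$ satisfies $\mu_n(s'')=\infty$ (and then $\mu_n(s)=\infty$). Your sketch for this case runs the subnet argument on the unnormalized $\mu_n$ and asserts that ``LSC and monotonicity force $g_m(\mu)$ to be bounded.'' That step fails: monotonicity only yields $g_m(\mu_{n_j})\leq g_{n_j}(\mu_{n_j})$ for $n_j\geq m$, and although each $g_n(\mu_n)$ is finite (being $<\alpha\mu_n(s)$), the sequence $(g_n(\mu_n))_n$ need not be bounded, so $\liminf_j g_{n_j}(\mu_{n_j})$ can equal $+\infty$ and no contradiction results. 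The repair is to normalize here as well, but by $1+g_n(\mu_n)$ rather than by $\mu_n(s''_n)$: setting $\nu_n=\mu_n/(1+g_n(\mu_n))$ gives $g_n(\nu_n)<1$ while $\nu_n(s)=\infty$ persists, so a limit $\nu$ of a subnet satisfies $\nu(t)=\infty$, hence $\sup_m g_m(\nu)\geq\beta\nu(t)=\infty$, against $g_m(\nu)\leq 1$ for every $m$. (One can in fact avoid the case split altogether by dividing by $\alpha\mu_n(s)$ when this is finite and by $1+g_n(\mu_n)$ otherwise.) A small further point: your closing remark is backwards --- the strict inequality $\alpha<\beta$ is what drives the contradiction in the main case, whereas in the edge case the contradiction is $\infty$ against a finite bound and strictness plays no role.
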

\begin{proof}
Suppose that $(\lambda_i)$ is a net in $\F(S)$ such that $\lambda_i\to \lambda$ and $\lambda_i(s)>\frac{1}{\alpha}$ for all $i$. Then 
\[
\lambda(t)\geq \limsup_i \lambda_i(s)\geq \frac{1}{\alpha}>\frac{1}{\beta}.
\] 
This shows that we have the inclusion
\begin{align*}
\overline{ \{\lambda\in \F(S)\mid \alpha \hat s(\lambda)>1\} }
\subseteq 
\{ \lambda\mid \beta\hat t(\lambda)>1 \}.
\end{align*}
By \cite[Proposition 5.1]{ers}, this inclusion implies that $\alpha\hat s\ll \beta \hat t$ in $\Lsc(\F(S))$.
\end{proof}

The following proposition gives an algebraic  characterization
of the comparison of elements of $S$ by functionals (thus answering
question (i) from the introduction).

\begin{proposition}\label{stcomparison}
Let $S$ be an ordered semigroup that satisfies \emph{O1-O5} and let $s,t\in S$.
Then $\hat s\leq \hat t$ if and only if 
for every $\epsilon>0$ and $s'\ll s$ there exist $M,N\in \N$ such that $\frac{M}{N}>1-\epsilon$
and $Ms'\leq Nt$.
\end{proposition}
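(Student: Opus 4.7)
My plan is to prove the two implications separately. The ``if'' direction follows quickly from the definition of a functional together with axiom O2, while the ``only if'' direction is the substantive half and will be established by contrapositive through the construction of a separating functional.

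For the easy direction ($\Leftarrow$), I would fix an arbitrary $\lambda \in \F(S)$ and apply it to each inequality $Ms' \leq Nt$ to obtain $M\lambda(s') \leq N\lambda(t)$. The cases $\lambda(t)=0$ (which forces $\lambda(s')=0$ for every $s' \ll s$, and hence $\lambda(s)=0$ by O2) and $\lambda(t)=\infty$ are immediate; otherwise $\lambda(s') \leq (N/M)\lambda(t) < \lambda(t)/(1-\epsilon)$, and since $\epsilon > 0$ is arbitrary, $\lambda(s') \leq \lambda(t)$. Taking the supremum over $s' \ll s$ and invoking O2 then yields $\lambda(s) \leq \lambda(t)$, i.e., $\hat s \leq \hat t$.

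For the hard direction ($\Rightarrow$), I would argue by contrapositive: suppose there exist $s' \ll s$ and $\epsilon \in (0,1)$ such that $Ms' \leq Nt$ implies $M \leq (1-\epsilon)N$ for all $M, N \in \N$, and construct $\lambda \in \F(S)$ with $\lambda(s) > \lambda(t)$. First I would define an additive, order-preserving pre-functional on the sub-semigroup $\N s' + \N t$ of $S$ by $\mu_0(Ms' + Nt) := M + N(1-\epsilon)$; the standing assumption (together with O5, used to convert inequalities inside $S$ into ``approximate subtractions'') should guarantee that $\mu_0$ is well-defined and monotone with respect to the order of $S$. Next I would extend $\mu_0$ by a Zorn's lemma argument to an additive order-preserving map $\mu\colon S \to [0,\infty]$, still with $\mu(s')=1$ and $\mu(t) \leq 1-\epsilon$; the admissible range of values at each new element is controlled using O5. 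Finally, Lemma~\ref{regularization} produces the supremum-preserving regularization $\tilde\mu \in \F(S)$, and the crucial point is that the hypothesis $s' \ll s$ gives
\[
\tilde\mu(s) = \sup_{r \ll s} \mu(r) \geq \mu(s') = 1,
\]
while $\tilde\mu(t) \leq \mu(t) \leq 1-\epsilon$. Hence $\hat s(\tilde\mu) \geq 1 > 1-\epsilon \geq \hat t(\tilde\mu)$, contradicting $\hat s \leq \hat t$.

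The main obstacle will be the Zorn extension step, where for each new element $y \in S$ one must produce a value of $\mu(y)$ compatible with every inequality of the form $a + ky \leq b + k'y$ with $a, b$ in the current domain. The absence of subtraction in $S$ must be compensated by O5, which for each such inequality supplies witnessing remainders $r \in S$ translating the inequality into a two-sided numerical constraint on $\mu(y)$; showing that the intersection of these constraints is non-empty is where the argument will be most delicate. A secondary subtlety is that $(1-\epsilon)$ is real rather than rational, so the monotonicity of $\mu_0$ itself will likely be verified through rational approximants to $1-\epsilon$ combined once more with O5.
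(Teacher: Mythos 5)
Your ``if'' direction is correct and is essentially the paper's. The gap is in the ``only if'' direction, and it is genuine: the map $\mu_0(Ms'+Nt):=M+N(1-\epsilon)$ need be neither well defined nor order preserving on $\N s'+\N t$. Your standing assumption only constrains relations of the \emph{pure} form $Ms'\leq Nt$, whereas monotonicity of $\mu_0$ requires compatibility with every relation $M_1s'+N_1t\leq M_2s'+N_2t$ that happens to hold in $S$; since $S$ is not cancellative, mixed relations cannot be reduced to pure ones. For instance, a relation such as $2s'+t\leq 3t$ would force $2+(1-\epsilon)\leq 3(1-\epsilon)$, which is false, and nothing in O1--O5 or in your hypothesis excludes such relations a priori. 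The same difficulty reappears, amplified, in the Zorn step: to show that the admissible interval for $\mu(y)$ is nonempty one takes positive integer combinations of two constraints $a_i+m_iy\leq b_i+n_iy$ and arrives at an inequality $A+ky\leq B+ky$ with $A,B$ in the current domain, from which $\mu(A)\leq\mu(B)$ cannot be deduced without cancellation; O5 does not repair this. Extending states on non-cancellative ordered semigroups is precisely the hard content that needs a citation or a real proof, and your sketch leaves it unproved at exactly the point where it is delicate.

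The paper avoids all of this. It first tests $\hat s\leq\hat t$ against the $\{0,\infty\}$-valued functional supported on the ideal generated by $t$, obtaining $s\leq\infty\cdot t$ and hence $s'\leq Ct$, so that $t$ serves as an order unit where it matters. It then fixes rationals $1-\epsilon<P/Q<1$, shows that $P\alpha(s')<Q\alpha(t)$ for \emph{every} additive order-preserving $\alpha\colon S\to[0,\infty]$ with $\alpha(t)=1$ (passing back and forth between $\alpha$ and its supremum-preserving regularization via Lemma \ref{regularization}), and invokes \cite[Proposition 2.1]{ortega-perera-rordam} to conclude that $(k+1)Ps'\leq kQt$ for all large $k$. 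That cited proposition is a Goodearl--Handelman-type separation theorem and is where the analogue of your construction is actually carried out; if you want a self-contained argument, that is the statement you must reprove, not the naive generator-by-generator extension.
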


\begin{proof}
If $Ms'\leq Nt$, with $M/N>1-\epsilon$, then $(1-\epsilon)\widehat{s'}\leq \hat t$. Passing to the supremum over all $\epsilon>0$
and $s'\ll s$ we get that $\hat s\leq \hat t$.

Suppose that $\hat s\leq \hat t$ and let $s'\ll s$ and $\epsilon>0$. 
Comparing $s$ and $t$ on the functional $\lambda\colon S\to [0,\infty]$ such that $\lambda(x)=0$ if $x\leq \infty\cdot t$ and $\lambda(x)=\infty$
otherwise, we conclude that $s\leq \infty \cdot t$, and so $s'\leq Ct$ for some finite $C>0$. 
Choose $P,Q\in \N$ such that $1-\epsilon<P/Q<1$. Then $P\lambda(s)<Q\lambda(t)$ for every $\lambda\in \F(S)$ such that
$\lambda(t)=1$. Let $\alpha\colon S\to [0,\infty]$ be an ordered semigroup map such that $\alpha(t)=1$.
Let $\tilde\alpha$ be the supremum preserving regularization of $\alpha$  (defined as in  Lemma \ref{regularization}). 
 If $\tilde\alpha(t)\neq 0$ then 
$P\alpha(s')\leq P\tilde\alpha(s)<Q\tilde\alpha(t)\leq Q\alpha(t)$. If $\tilde\alpha(t)=0$ then $P\alpha(s')=0<Q=Q\alpha(t)$. In summary, $P\alpha (s')<Q\alpha(t)$ for any ordered semigroup map $\alpha\colon S\to [0,\infty]$ 	
such that $\alpha(t)=1$.
By \cite[Proposition 2.1]{ortega-perera-rordam}, this implies that $(k+1)Ps'\leq kQt$ for all $k\in \N$ large enough. Since we can choose $k$
such that $\frac{(k+1)P}{kQ}>1-\epsilon$, we are done.   
\end{proof}

\section{The ordered semigroup $S_\R$}\label{SR}

\subsection{Definition and properties of $S_\R$}
Let $S$ be a positive ordered semigroup satisfying axioms O1-O5 (i.e., in the category $\CCu$ and with the almost algebraic order property).
We denote by $S_\R$ the subset of $\Lsc(\F(S))$ of functions expressible as the pointwise supremum of an increasing sequence
$(h_n)$, where each $h_n$ belongs to the $\Q^+$-linear span of the  image of $S$ in $\Lsc(\F(S))$. That is, $f\in S_\R$ 
if there exist $s_i\in S$ and $n_i\in \N$, with $i=1,2,\dots$, such that the sequence $(\frac{\hat s_i}{n_i})_i$ is increasing and 
\[
f(\lambda)=\sup_i \frac{\hat s_i(\lambda)}{n_i}\hbox{ for all }\lambda\in \F(S).
\] 
 
\begin{proposition}\label{SRinCu}
Let $S$ be an ordered semigroup satisfying the axioms \emph{O1-O5}. Then $S_\R$ also satisfies \emph{O1-O5} and $\F(S)\cong \F(S_\R)$
as topological cones. 
\end{proposition}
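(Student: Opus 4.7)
The plan is to verify each of the axioms O1--O5 for $S_\R$, viewed as a subset of $\Lsc(\F(S))$, and then to establish the cone isomorphism $\F(S)\cong\F(S_\R)$ as a separate step. The central tool throughout is Lemma \ref{hatwayb}, which transfers compact-containment relations from $S$ into $\Lsc(\F(S))$ by inserting a strict scalar gap. I expect the verification of O5 to be the main obstacle.

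Closure of $S_\R$ under addition and multiplication by positive rationals is immediate, since the $\Q^+$-linear span of $\hat S$ is so closed and pointwise suprema of increasing sequences commute with these operations; this also yields O4. For O2, given $f=\sup_n \hat s_n/m_n\in S_\R$ with the sequence increasing, I would pick rapidly increasing sequences $(s_{n,k})_k$ in $S$ with $\sup_k s_{n,k}=s_n$ and, using Lemma \ref{hatwayb} with scalar factors approaching $1$, diagonalize to obtain a rapidly increasing sequence in $S_\R$ whose supremum is $f$. O1 then follows by a further diagonalization over the rapidly increasing sequences produced by O2. For O3, one first checks that the $\ll$ relation in $S_\R$ agrees with the one in $\Lsc(\F(S))$ on elements of $S_\R$---this uses the rapidly increasing sequences from O2, which are simultaneously rapidly increasing in $\Lsc(\F(S))$ by Lemma \ref{hatwayb}---and then invokes the corresponding additivity property in $\Lsc(\F(S))$, whose compact containment is characterized by \cite[Proposition 5.1]{ers}.

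The main obstacle is O5. Given $f'\ll f\leq g$ in $S_\R$, we must construct $h\in S_\R$ with $f'+h\leq g\leq f+h$. The strategy is to reduce to O5 inside $S$. Using $f'\ll f$, extract a $\Q^+$-span element $\hat s/N$ with $f'\leq \hat s/N\leq f$; writing $g=\sup_k \hat t_k/M_k$ with the sequence increasing, for each $k$ one applies O5 in $S$ to a suitable pair (after clearing denominators) to produce $r_k\in S$ witnessing the slack between $Ms$ and $Nt_k$. Assembling the $r_k$, with appropriate scalars, into an element $h\in S_\R$ given by a supremum of the form $\sup_k\widehat{r_k}/(NM_k)$, and checking both $f'+h\leq g$ and $g\leq f+h$, requires careful bookkeeping of the scalar factors and of the increasing property of the resulting sequence, and this is the delicate technical step of the proof.

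Finally, for the cone isomorphism, each $\lambda\in\F(S)$ gives evaluation $\hat\lambda(f):=f(\lambda)$ on $S_\R$, which is additive (since elements of $S_\R$ are linear on $\F(S)$) and preserves sequential suprema (which are pointwise in $S_\R$); hence $\hat\lambda\in\F(S_\R)$. Conversely, each $\mu\in\F(S_\R)$ restricts via $s\mapsto\hat s$ to a functional on $S$, and the two assignments are mutually inverse cone maps. Continuity of $\lambda\mapsto\hat\lambda$ follows because each $f\in S_\R$ is lower semicontinuous on $\F(S)$ and $f'\ll f$ in $S_\R$ implies $\overline{\{f'>c\}}\subseteq\{f>c\}$ by \cite[Proposition 5.1]{ers}. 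Continuity of the restriction uses Lemma \ref{hatwayb}: given $s'\ll s$ in $S$ and $\alpha<1$, the relation $\alpha\widehat{s'}\ll\hat s$ in $S_\R$ transfers the defining topological inequalities on $\F(S_\R)$ to those on $\F(S)$ after letting $\alpha\to 1$. Since both cones are compact Hausdorff, the bijection is automatically a homeomorphism.
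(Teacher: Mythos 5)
Your treatment of O1--O4 and of the cone isomorphism follows the paper's route (diagonalization via Lemma \ref{hatwayb}, plus compactness of both cones for the homeomorphism), and those parts are fine, though for O3 the paper does not invoke additivity of $\ll$ in $\Lsc(\F(S))$ (which is not obvious from the level-set characterization of \cite[Proposition 5.1]{ers}); instead it reduces to $g_i=\hat t_i$ with $t_i\in S$ and interpolates $f_i\leq(1-\epsilon)\widehat{t_i'}\ll\hat t_i$ using $t_i'\ll t_i$, so that additivity of $\ll$ is only needed for elements coming from $S$, where Lemma \ref{hatwayb} supplies it.

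The genuine gap is in O5, and it is not merely ``delicate bookkeeping.'' Two things go wrong with your reduction to O5 in $S$. First, from $\hat s/N\leq\hat t_k/M_k$ you cannot conclude $M_k s\leq N t_k$ in $S$: the map $s\mapsto\hat s$ is not an order embedding, and Proposition \ref{stcomparison} only yields approximate comparisons $M's'\leq N't_k$ with $M'/N'>1-\epsilon$ for $s'\ll s$, so there is no ``suitable pair'' to which O5 in $S$ applies after clearing denominators. Second, and more fundamentally, the complements $r_k$ produced by O5 at different stages $k$ are in no way coherent: O5 gives no canonical or monotone choice of complement, so there is no reason the candidates $\widehat{r_k}/(NM_k)$ form an increasing sequence, and no reason their supremum satisfies the upper bound $g\leq f+h$ (which forces $h(\lambda)\geq g(\lambda)-f(\lambda)$ at \emph{every} $\lambda\in\F(S)$ simultaneously). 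This is precisely why the paper postpones O5: its proof (Proposition \ref{algordercont} via Lemma \ref{algordercontlemma}) defines $h$ pointwise as $h_n(\lambda)=g_n(\lambda)-f(\lambda)$ where $g_n(\lambda)<\infty$ and $\infty$ elsewhere, proves lower semicontinuity of $h_n$ using the stronger relation $f\lhd g_n$ (continuity of $f$ where $g_n$ is finite, established in Proposition \ref{rapidlycontinuous} by a dyadic averaging construction), and then must invoke the duality theorem $S_\R=\LL(\F(S))$ (Theorem \ref{thebidual}, resting on the Choquet-theoretic Proposition \ref{hahnbanach} and Lemma \ref{relunit}) to conclude that this pointwise-defined $h$ actually lies in $S_\R$. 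That entire chain of ideas --- the relation $\lhd$, the continuity argument, and the identification of $S_\R$ with $\LL(\F(S))$ --- is absent from your sketch, and without it the membership $h\in S_\R$ cannot be established.
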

\begin{proof}
Let $s\in S$ and let $(s_i)$ be a rapidly increasing sequence with supremum $s$. By Lemma \ref{hatwayb}, 
we have $(1-\frac{1}{i})\hat s_i\ll (1-\frac{1}{i+1})\hat s_{i+1}$, where the relation $\ll$ is taken in $\Lsc(\F(S))$. It follows
that this relation of compact containment also holds in $S_\R\subseteq \Lsc(\F(S))$. Thus, $\hat s$ is the supremum
of a rapidly increasing sequence. This automatically holds also for $\frac{\hat s}{n}$ for every $n\in \N$.
Using a standard diagonalization argument (see the proofs of \cite[Theorem 1 (i)]{cei} and \cite[Proposition 5.1 (iii)]{ers}) 
we can then show that $S_\R$ is closed under the suprema of increasing sequences 
(as a subset of $\Lsc(\F(S))$), and that every element of $S_\R$ is the supremum of a rapidly increasing sequence in $S_\R$. 
Since the  supremum of a sequence in $\Lsc(\F(S))$ is the pointwise supremum, it is clear that $S_\R$ satisfies O4.

Let us show that $S_\R$ satisfies axiom O3.
Let $f_i,g_i\in S_\R$, $i=1,2$, be such that $f_i\ll g_i$. In order to prove O3, we may assume that $g_1$ and $g_2$ belong to a dense subset. Thus,
we may assume that they have the form $\alpha \hat t$, with $t\in S$ and $\alpha\in \Q^+$. Moreover, multiplying by a suitable integer, we reduce proving O3 to the case that
$g_i=\hat t_i$, $i=1,2$. Let us find $\epsilon>0$ and $t_i'\ll t_i$, with $i=1,2$, such that $f_i\leq (1-\epsilon)t_i'\ll t_i$.
Then $f_1+f_2\leq (1-\epsilon)(t_1'+t_2')\ll \hat t_1+\hat t_2$. This proves O3.

We postpone the proof of O5 to Proposition \ref{algordercont}, where a stronger version of the almost algebraic order property is obtained.

The map  $\lambda\mapsto (f\mapsto f(\lambda))$, from $\F(S)$ to $\F(S_\R)$ is linear and  continuous. It is also bijective, since any functional on $S_\R$ is uniquely determined by its restriction to the image of $S$ in $S_\R$, and thus gives rise to a unique functional on $S$. Since both $\F(S)$ and $\F(S_\R)$ are compact Hausdorff spaces, $\lambda\mapsto (f\mapsto f(\lambda))$ is a homeomorphism.
\end{proof}

The ordered semigroup $S_\R$ can be characterized by a universal property using the property of real multiplication. 

\begin{definition}
We say that the ordered semigroup $O$ has real multiplication if there exists
a map $(0,\infty]\times O\mapsto O$
\[
(t,s)\mapsto t\cdot s
\]
that is additive on both variables, order preserving on both variables, supremum (of sequences) preserving on both variables, and such that $1\cdot s=s$. 
\end{definition}

$S_\R$ clearly has real multiplication.
An ordered semigroup with real multiplication is  unperforated by definition, i.e., $nx\leq ny$ implies $x\leq y$. Although $S_\R$ is not necessarily cancellative, 
it has the following form of cancellation (a direct consequence of unperforation):
\[
\begin{array}{c}
f+h\leq g+h\\
h\propto g
\end{array}
\Rightarrow f\leq g.
\]
Here $h\propto g$ means that $h\leq ng$ for some $n\in \N$.

The following proposition implies that having real multiplication is a property rather than additional structure
(thus, the scalar multiplication can be uniquely defined, if at all).

\begin{proposition}\label{realification}
Let $S$ and $S'$ be a ordered semigroups satisfying \emph{O1-O5} and suppose that  $S'$ has real multiplication. Let $\alpha \colon S\to S'$ be an ordered semigroup map that preserves 
the suprema of increasing sequences. Then there exists
a unique ordered semigroup map $\overline \alpha\colon S_\R\to S'$ that preserves the suprema of increasing sequences and such that the following diagram commutes:
\[
\xymatrix{
S\ar[r]^\alpha\ar[d] & S'\\
S_\R\ar[ur]_{\overline\alpha}&
}
\] 
\end{proposition}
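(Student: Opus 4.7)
The plan is to build $\overline{\alpha}$ in two stages: first on the $\Q^+$-linear span $D\subseteq S_\R$ of the image of $S$, then extend to all of $S_\R$ by sequential suprema. Since $\widehat{s+t}=\hat s+\hat t$ and common denominators may be cleared, every element of $D$ has the form $\tfrac{1}{n}\hat s$ with $s\in S$, $n\in\N$, so I would \emph{define}
\[
\overline{\alpha}\!\left(\tfrac{1}{n}\hat s\right):=\tfrac{1}{n}\cdot\alpha(s),
\]
where $\tfrac{1}{n}\cdot$ is the real multiplication on $S'$. Both well-definedness and order preservation on $D$ then reduce to the implication
\[
m\hat s\leq n\hat t \;\Longrightarrow\; m\alpha(s)\leq n\alpha(t)\quad\text{in }S',
\]
which is the technical heart of the argument and the only place where I expect real difficulty.

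For that implication I invoke Proposition \ref{stcomparison}: for every $\epsilon>0$ and $s'\ll s$ there exist $M,N\in\N$ with $M/N>1-\epsilon$ and $Mms'\leq Nnt$ in $S$. Applying $\alpha$ gives $Mm\alpha(s')\leq Nn\alpha(t)$ in $S'$; acting by $\tfrac{1}{M}\cdot$ and using order preservation in the scalar yields
\[
m\alpha(s')\leq \tfrac{N}{M}\,n\alpha(t)\leq \tfrac{1}{1-\epsilon}\,n\alpha(t),
\]
i.e.\ $(1-\epsilon)m\alpha(s')\leq n\alpha(t)$. Passing to the supremum over a rapid-increase sequence $s'\uparrow s$, and then over $\epsilon=1/k\uparrow 1$, using sup-preservation of $\alpha$, of real multiplication in the second variable, and of real multiplication in the \emph{scalar} variable, yields $m\alpha(s)\leq n\alpha(t)$. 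Additivity on $D$ is then transparent from additivity of $\alpha$ and of real multiplication.

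For the second stage I extend $\overline{\alpha}$ by choosing, for each $f\in S_\R$, a rapidly increasing sequence $(a_k)$ in $D$ with $f=\sup_k a_k$ (which exists by the proof of Proposition \ref{SRinCu}) and setting $\overline{\alpha}(f):=\sup_k\overline{\alpha}(a_k)$, which exists in $S'$ by O1. Independence of the chosen sequence is a sandwich argument: given another representation $f=\sup_j b_j$, approximate each $a_k=\tfrac{1}{n_k}\widehat{s_k}$ from below by $a_k^{(l)}:=\tfrac{1}{n_k}\widehat{s_k^{\,l}}$ along a rapid increase $s_k^{\,l}\uparrow s_k$; since $a_k^{(l)}\ll a_k\leq\sup_j b_j$ in $S_\R$, we get $a_k^{(l)}\leq b_{j_l}$ for some $j_l$, hence $\overline{\alpha}(a_k^{(l)})\leq \sup_j\overline{\alpha}(b_j)$ by Stage~1, and sup-preservation in $l$ gives $\overline{\alpha}(a_k)\leq \sup_j\overline{\alpha}(b_j)$; symmetry gives equality. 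Additivity on $S_\R$ then follows from O4 in $S'$ and sup-preservation is built into the construction. Uniqueness is forced: any map $\overline{\alpha}'$ with the stated properties satisfies $n\,\overline{\alpha}'(\tfrac{1}{n}\hat s)=\overline{\alpha}'(\hat s)=\alpha(s)=n\cdot\tfrac{1}{n}\alpha(s)$, and unperforation of $S'$ (implied by real multiplication) pins down $\overline{\alpha}'(\tfrac{1}{n}\hat s)=\tfrac{1}{n}\alpha(s)$; sup-preservation then determines $\overline{\alpha}'$ on all of $S_\R$.
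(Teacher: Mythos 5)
Your proposal follows the paper's own proof in all essentials: the paper likewise extracts the key monotonicity statement from Proposition \ref{stcomparison} (in the form $\hat s_1\leq \hat s_2\Rightarrow \alpha(s_1)\leq\alpha(s_2)$, obtained by passing to the supremum over $\epsilon>0$ and $s_1'\ll s_1$ in $(1-\epsilon)\alpha(s_1')\leq\alpha(s_2)$), defines $\overline\alpha(f)=\sup_i\alpha(s_i)/n_i$ along a rapidly increasing sequence $(\hat s_i/n_i)$ with supremum $f$, and checks independence of the chosen sequence by an intertwining argument before noting that additivity, order preservation and sup-preservation follow by a routine (if tedious) verification. Your two-stage presentation ($\Q^+$-span first, then suprema) and your uniqueness argument via unperforation are the same computation, spelled out in slightly more detail.

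One step of your write-up is false as literally stated. For $s'\ll s$ it is \emph{not} true in general that $\tfrac{1}{n}\widehat{s'}\ll\tfrac{1}{n}\hat s$ in $\Lsc(\F(S))$: take $S=\{0,1,\dots,\infty\}$ and $s'=s=1$, so that $\hat s$ is the pointwise supremum of the increasing sequence $(1-\tfrac1l)\hat s$ without dominating any of its terms. This is precisely why Lemma \ref{hatwayb} demands a strict gap $\alpha<\beta$ in the scalars. Consequently your approximants $a_k^{(l)}:=\tfrac{1}{n_k}\widehat{s_k^{\,l}}$ need not satisfy $a_k^{(l)}\ll a_k$, and the sandwich step $a_k^{(l)}\leq b_{j_l}$ is unjustified. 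The repair is the one already used in the proof of Proposition \ref{SRinCu}: replace $a_k^{(l)}$ by $(1-\tfrac1l)\tfrac{1}{n_k}\widehat{s_k^{\,l}}$, which still lies in the $\Q^+$-span, is $\ll a_k$ by Lemma \ref{hatwayb}, has supremum $a_k$, and satisfies $\sup_l\overline\alpha\bigl((1-\tfrac1l)\tfrac{1}{n_k}\widehat{s_k^{\,l}}\bigr)=\overline\alpha(a_k)$ because both $\alpha$ and the scalar action preserve suprema. (Alternatively, compare only rapidly increasing sequences, as the paper does, so that $a_k\ll a_{k+1}\leq\sup_jb_j$ gives the intertwining directly.) With that change the proof goes through.
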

\begin{proof}
Let us show the uniqueness of $\overline\alpha$ first. Suppose that $\overline \alpha_1,\overline \alpha_2\colon S_\R\to S'$ satisfy that
$\overline\alpha_1(\hat s)=\overline\alpha_2(\hat s)$ for all $s$. Then $\overline\alpha_1$ and $\overline\alpha_2$
also agree on elements of the form $\hat s/n$ and on the suprema of increasing sequences of such elements. Thus, $\overline\alpha_1=\overline\alpha_2$.

Let $\alpha\colon S\to S'$ be given as in the statement of the proposition. Let $s_1,s_2\in S$ be such that $\hat s_1\leq \hat s_2$. 
Using Proposition \ref{stcomparison}, we can see that $(1-\epsilon)\alpha(s_1')\leq \alpha(s_2)$ for all $\epsilon>0$ and $s_1'\ll s_1$.
Passing to the supremum over all such $\epsilon$ and $s_1'$ we obtain that $\alpha(s_1)\leq \alpha(s_2)$. 

Let $f\in S_\R$. Let $(\hat s_i/n_i)$ and $(\hat t_i/m_i)$ be rapidly increasing sequences  with supremum $f$. Then these sequences intertwine: 
for every $i$ there exists $j$ such that
$\hat s_i/n_i\leq \hat t_j/m_j$ and $\hat t_i/m_i\leq \hat s_j/n_j$. Thus,   the sequences $(\alpha(s_i)/n_i)$ and $(\alpha(t_i)/m_i)$ are also intertwined, 
and so they have the same supremum. We can thus define
\[
\overline\alpha(f):=\sup_i \frac{\alpha(s_i)}{n_i}.
\]
A straightforward, but tedious, analysis show that this map is additive, order preserving, and supremum preserving. 
\end{proof}
\begin{corollary}
Let $S$ be an ordered semigroup satisfying \emph{O1-O5}. Then $(S_\R)_\R\cong S_\R$. 
\end{corollary}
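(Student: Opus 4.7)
The plan is to obtain the isomorphism directly from the universal property of realification established in Proposition \ref{realification}, by applying it twice. By Proposition \ref{SRinCu}, $S_\R$ itself satisfies \emph{O1--O5}, so $(S_\R)_\R$ is defined; and $S_\R$ carries a real multiplication by construction. Let $\iota\colon S_\R\to (S_\R)_\R$ denote the canonical map sending $f\in S_\R\subseteq \Lsc(\F(S))$ to the corresponding element $\hat f\in \Lsc(\F(S_\R))$ under the identification $\F(S_\R)\cong \F(S)$ of Proposition \ref{SRinCu}. This $\iota$ is a supremum-preserving ordered semigroup map.

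First I would apply Proposition \ref{realification} to the identity map $\id_{S_\R}\colon S_\R\to S_\R$, viewing the target as an ordered semigroup satisfying \emph{O1--O5} and having real multiplication. This produces a unique supremum-preserving ordered semigroup map $\Phi\colon (S_\R)_\R\to S_\R$ such that $\Phi\circ \iota=\id_{S_\R}$. For the reverse composition, I would observe that both $\iota\circ \Phi$ and $\id_{(S_\R)_\R}$ are supremum-preserving ordered semigroup maps from $(S_\R)_\R$ to itself whose restrictions along $\iota$ agree with $\iota$. Applying the uniqueness clause of Proposition \ref{realification} to the map $\iota\colon S_\R\to (S_\R)_\R$ itself then forces $\iota\circ \Phi=\id_{(S_\R)_\R}$. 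Thus $\iota$ and $\Phi$ are mutually inverse isomorphisms.

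The only point to check carefully, and the main potential obstacle, is the identification of $\iota$ with the canonical map implicit in Proposition \ref{realification}: elements of $(S_\R)_\R$ are by definition suprema of rapidly increasing sequences of $\Q^+$-linear combinations of elements $\hat f$ for $f\in S_\R$, but since $S_\R$ already possesses a real multiplication, such $\Q^+$-combinations already lie in $\iota(S_\R)$. This matches the hypothesis of Proposition \ref{realification} verbatim and validates the uniqueness step above. Beyond unpacking this identification, no further work is required.
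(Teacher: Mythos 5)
Your argument is correct and is essentially the intended one: the paper states this corollary immediately after Proposition \ref{realification} precisely because it follows from the standard two-fold application of the universal property (existence giving $\Phi$ with $\Phi\circ\iota=\id_{S_\R}$, uniqueness forcing $\iota\circ\Phi=\id_{(S_\R)_\R}$), and your check that the uniqueness clause applies because $(S_\R)_\R$ has real multiplication is the only point needing care.
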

\begin{remark}
The case can be made that $S_\R$ is nothing but the tensor product $S\otimes [0,\infty]$ in the category of ordered semigroups that satisfy the axioms O1-O5. 
However, tensor products in this category remain a subject yet to be investigated. So we will not pursue this point of view here. 
\end{remark}

Let us introduce a strengthening  of the compact containment relation among the elements of $S_\R$.
Let $f,g\in S_\R$. Let us write $f\lhd g$ if $f\leq (1-\epsilon)g$ for some $\epsilon>0$
and $f$ is continuous at each $\lambda\in \F(S)$ for which  $g(\lambda)$ is finite. 
We will make repeated use of this relation in the coming sections. 
We remark that 
\begin{enumerate}[(i)]
\item
$f\lhd g\leq h$ implies $f\lhd h$.

\item
$f\lhd g$ implies that $f\ll g$, where the relation $\ll$ is taken in $\Lsc(\F(S))$. This is proven in \cite[Proposition 5.1]{ers}.

\item
$f\lhd g$ and $f'\lhd g'$ imply $f+f'\lhd g+g'$.
\end{enumerate}

\begin{proposition}\label{rapidlycontinuous}
For each $f\in S_\R$ there exists a sequence $h_1\lhd h_2\lhd h_3\dots$ in $S_\R$ with supremum $f$.
\end{proposition}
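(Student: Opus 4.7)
The plan is to reduce the problem to constructing, for each $f\in S_\R$, an increasing sequence $(g_n)$ of \emph{continuous} elements of $S_\R$ with $\sup_n g_n=f$. Granted such a sequence, I would set $h_n:=(1-1/n)\,g_n$ and verify the three required properties directly. Indeed, the algebraic ratio $(1-1/n)/(1-1/(n+1))$ equals $1-1/n^2$, so $h_n\leq(1-1/n^2)h_{n+1}$, giving the scalar inequality part of $\lhd$ with $\epsilon_n=1/n^2$. Continuity of each $g_n$ on all of $\F(S)$ transfers directly to $h_n=(1-1/n)g_n$, so $h_n$ is continuous at every $\lambda\in\F(S)$ (a fortiori at every $\lambda$ with $h_{n+1}(\lambda)<\infty$), establishing the continuity part of $\lhd$. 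Finally $\sup_n h_n(\lambda)=\sup_n (1-1/n)g_n(\lambda)=f(\lambda)$ pointwise.

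The main obstacle is thus to produce the continuous approximants $(g_n)$. Using Proposition \ref{SRinCu} and the diagonal density argument from its proof, I would first reduce to the case $f=\hat s$ for some $s\in S$ (the general case follows by diagonalization over a rapidly increasing sequence approximating $f$). Fix a rapidly increasing sequence $s_k\ll s_{k+1}$ in $S$ with $\sup_k s_k=s$. Lemma \ref{hatwayb}, combined with the closure-containment characterization of $\ll$ in $\Lsc(\F(S))$ from \cite[Proposition 5.1]{ers}, yields inclusions $\overline{\{\hat{s_k}>r\}}\subseteq\{\hat{s_{k+1}}>r(1-\delta)\}$ for every $\delta>0$; these give local upper bounds on each $\hat{s_k}$ at points where $\hat{s_{k+1}}$ is finite, but do not yield upper semicontinuity.

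To upgrade local boundedness to genuine continuity, I would insert, via axiom O5 on $S$, long chains $s_n=s_{n,0}\ll s_{n,1}\ll\cdots\ll s_{n,N_n}=s_{n+1}$ between consecutive $s_k$'s, and construct $g_n$ as a Cesàro-type averaged combination $\frac{1}{N_n}\sum_{j=1}^{N_n}\hat{s_{n,j}}$ with suitable rational scalings arranged so that $g_n\leq g_{n+1}\leq\hat s$. The averaging over many intermediate levels is designed to smooth out the jumps of individual $\hat{s_{n,j}}$, forcing the local oscillation of $g_n$ at each $\lambda$ to scale with $1/N_n$. If necessary, one then replaces each $g_n$ by a pointwise supremum over refinements, which remains in $S_\R$ by its sup-closure (Proposition \ref{SRinCu}).

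The technical heart of the argument, and where I expect the main difficulty to lie, is verifying rigorously that this construction produces a genuinely continuous element of $S_\R$: any finite sum of step-like $\hat{s_{n,j}}$'s is itself a step function, so continuity arises only in the refinement limit, and the limiting supremum must remain in $S_\R$. Carefully orchestrating axiom O5 on $S$ (to supply the chains of intermediates) with Lemma \ref{hatwayb} (to control the closure-containment inclusions in a way that makes the averaged oscillation shrink uniformly) is the central technical challenge, essentially the place where the almost algebraic order property O5 and the real scaling freedom of $S_\R$ must be exploited jointly.
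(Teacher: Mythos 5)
Your central idea --- averaging over a long chain of interpolating elements so that the oscillation of the average scales like the reciprocal of the chain length --- is exactly the mechanism the paper uses (it takes dyadic interpolants $f_{k/2^n}$ between $f'$ and $f$ and forms $l_n=\frac{1}{2^n}\sum_{k=0}^{2^n-1}f_{k/2^n}$). But your proposal stops precisely at the decisive step. The point is not that each finite average is continuous (it is not, as you observe), but that the \emph{supremum} $l=\sup_n l_n$ of the lower averages is sandwiched between $l_n$ and the upper average $\overline{l}_n=\frac{1}{2^n}\sum_{k=1}^{2^n}f_{k/2^n}$, with $\overline{l}_n-l_n=\frac{1}{2^n}(f_1-f_0)\leq \frac{1}{2^n}f$; combining $l_n\ll l$ and $f_1\ll f$ with the defining inequalities of the topology on $\F(S)$ then gives $\limsup_i l(\lambda_i)\leq l(\lambda)+f(\lambda)/2^n$ for every $n$, hence continuity of $l$ at every $\lambda$ with $f(\lambda)<\infty$. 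This telescoping estimate is the entire content of the proposition, and you explicitly defer it as ``the central technical challenge'' rather than proving it. As written, the proposal is therefore incomplete at its essential point.

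Two further remarks. First, your reduction demands approximants $g_n$ continuous at \emph{every} point of $\F(S)$ (where finite); the construction above only yields continuity at points where the dominating element $f$ is finite, which is all that the relation $\lhd$ requires, and it is not clear that globally continuous approximants exist. The paper sidesteps this by proving the local statement ``$f'\ll f$ implies there is $l$ with $f'\leq l\lhd f$'' and then chaining it along a rapidly increasing sequence $(f_n)$ with supremum $f$, using that $l\lhd f_{n+1}\leq l_{n+1}$ implies $l\lhd l_{n+1}$. Second, the interpolating chains come from axiom O2 (every element is a supremum of a rapidly increasing sequence, which yields interpolation for $\ll$), not from O5; O5 supplies algebraic complements and plays no role in producing the interpolants.
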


\begin{proof}
It suffices to show that if $f'\ll f$ then there exists $l$ such that $f'\leq l\lhd f$.
Let us choose, recursively, elements $f_{\frac{k}{2^n}}\in S_\R$ indexed by the dyadic rationals in $[0,1]$ in the following manner: 
$f_0=f'$, $f_1\ll f$, and 
$f_{\frac{k}{2^n}}\ll f_{\frac{k'}{2^{n'}}}$ if $\frac{k}{2^n}<\frac{k'}{2^{n'}}$. Finally, for each $n\in \N$ let
\begin{align*}
l_n &=\frac{1}{2^n}\sum_{k=0}^{2^n-1} f_{\frac{k}{2^n}},\\
\overline{l}_n &=\frac{1}{2^n}\sum_{k=1}^{2^n} f_{\frac{k}{2^n}}.
\end{align*}
Then $(l_n)$ is increasing, $(\overline{l}_n)$ is decreasing, and $f'\leq l_n\leq \overline{l}_n\leq f$ for all $n$. Let $l=\sup_n l_n$.
Let us show that $l$ is continuous at each $\lambda$ where $f$ is finite. Suppose that  $f(\lambda)<\infty$ and let $\lambda_i\to \lambda$.
Since $l$ is lower semicontinuous,   $l(\lambda)\leq \liminf_i l(\lambda_i)$. On the other hand, for every $n$ we have 
\[
l_n\ll l\leq \overline {l}_n\leq l_n+\frac{f}{2^n}.\] 
Thus,
\[
\limsup_i l(\lambda_i)\leq \limsup_i l_n(\lambda_i)+\frac{1}{2^n}\cdot \limsup_i f_1(\lambda_i)\leq l(\lambda)+ \frac{f(\lambda)}{2^n}.
\] 
Since $n$ is arbitrary and $f(\lambda)<\infty$, we have $\limsup_i l(\lambda_i)\leq l(\lambda)$. Thus, $l$ is continuous on $\lambda$. 
In order to arrange that $l\lhd f$, we first find $\epsilon>0$ such that  $f'\ll (1-\epsilon)f$. We then find $l$
such that $f'\leq l\leq (1-\epsilon)f$ and $l$ is continuous on each $\lambda$ where $f$ is finite. 
\end{proof}

\begin{lemma}\label{dini} 
Let $f\lhd g$ and  let $(f_n)_n$ be an  increasing sequence with supremum $f$ and such that $f_n\lhd f$ for all $n$. The for every $\epsilon>0$ there exists $N$ such that $f\leq f_n+\epsilon g$ for all $n\geq N$.
\end{lemma}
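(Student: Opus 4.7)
The plan is to argue by contradiction and reduce to a Dini-type squeeze after normalizing using the $\R^+$-homogeneity of elements of $S_\R$. Suppose, for some $\epsilon>0$, the conclusion fails. Then there exist sequences $n_k\to\infty$ and $\mu_k\in\F(S)$ with
\[
f(\mu_k)>f_{n_k}(\mu_k)+\epsilon g(\mu_k).
\]
We must have $g(\mu_k)\in(0,\infty)$: the value $\infty$ would make the right-hand side infinite, while the value $0$ would force $f(\mu_k)\leq(1-\eta)g(\mu_k)=0$ (with $\eta>0$ chosen so that $f\leq(1-\eta)g$) and hence $f_{n_k}(\mu_k)=0$, both incompatible with a strict inequality.

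Next, observe that every element of $S_\R$ is $\R^+$-homogeneous on the cone $\F(S)$: this is clear for each $\hat s$, since $\hat s(t\lambda)=t\hat s(\lambda)$, and the property is preserved under addition, rational scaling, and suprema of increasing sequences. Replacing $\mu_k$ by $\mu_k/g(\mu_k)$, I may therefore assume $g(\mu_k)=1$; the inequality becomes $f(\mu_k)>f_{n_k}(\mu_k)+\epsilon$, and $\mu_k$ now lies in the closed (hence compact) subset $K:=\{g\leq 1\}$ of $\F(S)$.

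Pass to a subnet $\mu_{k_j}\to\mu\in K$, with $n_{k_j}\to\infty$. Since $g(\mu)\leq 1<\infty$, the relation $f\lhd g$ gives continuity of $f$ at $\mu$, so $f(\mu_{k_j})\to f(\mu)$. For the lower squeeze on $f_{n_{k_j}}(\mu_{k_j})$, fix $m$; eventually $n_{k_j}\geq m$ and monotonicity gives $f_{n_{k_j}}(\mu_{k_j})\geq f_m(\mu_{k_j})$, so lower semicontinuity of $f_m$ at $\mu$ yields $\liminf_j f_{n_{k_j}}(\mu_{k_j})\geq f_m(\mu)$; taking the supremum over $m$ gives $\liminf_j f_{n_{k_j}}(\mu_{k_j})\geq f(\mu)$. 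Together with the upper bound $f_{n_{k_j}}(\mu_{k_j})\leq f(\mu_{k_j})\to f(\mu)$, this forces $f_{n_{k_j}}(\mu_{k_j})\to f(\mu)$, hence $f(\mu_{k_j})-f_{n_{k_j}}(\mu_{k_j})\to 0$, contradicting the strict bound $>\epsilon$.

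The main obstacle---really the key idea that makes the whole thing work---is the rescaling step: a direct compactness argument on $\F(S)$ fails because the margin $\epsilon g$ vanishes where $g$ is small, yet compact control over $f_n$ is lost where $g$ is infinite. Using homogeneity to normalize $g$ to $1$ confines the bad points to a single compact set on which Dini's squeeze applies cleanly.
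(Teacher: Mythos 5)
Your argument is correct and is essentially the paper's own proof unwound: the paper simply observes that $(f_n)$ converges to $f$ uniformly on the compact set $\{\lambda\in\F(S)\mid g(\lambda)\leq 1\}$ by Dini's theorem (continuity of $f$ there coming from $f\lhd g$, lower semicontinuity of the $f_n$ giving the open covering), and then the inequality $f\leq f_n+\epsilon g$ follows everywhere by the same homogeneity/normalization you use. Your contradiction-plus-subnet argument is just a by-hand proof of that uniform convergence, so there is nothing genuinely different here.
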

\begin{proof}
This follows from the fact that $f_n$ converges uniformly to $f$ on the set $\{\lambda\in \F(S)\colon g(\lambda)\leq 1\}$ (by Dini's theorem).
\end{proof}

\subsection{$S_\R$ as dual of $\F(S)$.}
In this subsection $S$ continues to denote an ordered semigroup satisfying axioms O1-O5. 
Here we show how $S_\R$ may be recovered solely from the  topological cone $\F(S)$.
Indeed, $S_\R$ coincides with the ordered semigroup $\LL(\F(S))$
introduced in \cite{ers}. 

By $\LL(\F(S))$ we denote the subset of $\Lsc(\F(S))$ of functions $f$ 
expressible as the supremum of an increasing sequence $(f_n)$, with $f_n\in \Lsc(\F(S))$ and $f_n\lhd f_{n+1}$ for all $n$.
Proposition \ref{rapidlycontinuous} implies that $S_\R$ is contained in  $\LL(\F(S))$. Following the
same approach used to prove  \cite[Theorem 5.7]{ers}, we can show that they are in fact equal: 
\begin{theorem}\label{thebidual}
Let $S$ be an ordered semigroup satisfying \emph{O1-O5}. Then $S_\R=\LL(\F(S))$.
\end{theorem}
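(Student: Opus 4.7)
The plan is to establish the two inclusions separately. The forward inclusion $S_\R \subseteq \LL(\F(S))$ is immediate from Proposition \ref{rapidlycontinuous}, which produces, for any $f\in S_\R$, a sequence $h_1\lhd h_2\lhd\cdots$ in $S_\R\subseteq\Lsc(\F(S))$ with supremum $f$; this same sequence witnesses $f\in\LL(\F(S))$.

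For the reverse inclusion $\LL(\F(S))\subseteq S_\R$, I would proceed as in the proof of \cite[Theorem 5.7]{ers}. Given $f\in\LL(\F(S))$, write $f=\sup_n f_n$ with $f_n\lhd f_{n+1}$ in $\Lsc(\F(S))$. Since $S_\R$ is closed under the suprema of increasing sequences (by axiom O1 for $S_\R$, established in Proposition \ref{SRinCu}), it suffices to produce, for each $n$, an element $g_n\in S_\R$ with $f_n\leq g_n\leq f_{n+1}$; then $(g_n)$ is increasing with supremum $f$ and so $f\in S_\R$. This reduces the theorem to the following interpolation statement: whenever $h\lhd h'$ in $\Lsc(\F(S))$, there exists $g$ in the $\Q^+$-linear span of $\{\hat s:s\in S\}$ (which is contained in $S_\R$) with $h\leq g\leq h'$.

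To prove this interpolation lemma I would exploit the compactness of $\F(S)$ together with the two clauses of the relation $\lhd$: the strict margin $h\leq (1-\epsilon)h'$ and the continuity of $h$ at every $\lambda$ where $h'(\lambda)<\infty$. For each such $\lambda$, pick $s_\lambda\in S$ and $\alpha_\lambda\in\Q^+$ with $\alpha_\lambda\hat{s_\lambda}(\lambda)$ lying strictly between $h(\lambda)$ and $h'(\lambda)$; by the continuity of $h$ at $\lambda$ and the lower semicontinuity of $h'$ and of $\hat{s_\lambda}$, these inequalities persist on a neighbourhood of $\lambda$. A finite subcover of the compact sublevel set $\{\lambda:h'(\lambda)\leq C\}$ (handled for increasing $C$ and glued via the $\epsilon$-margin, since points where $h'$ is infinite put no upper constraint) combined with the formation of suitable sums and rational multiples of the local approximants yields the desired $g$.

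The main obstacle is precisely this interpolation: linear combinations of elements of $\hat S$ naturally provide lower bounds via summation but are more delicate to keep under a prescribed upper bound $h'$ on all of $\F(S)$ simultaneously. The resolution requires carefully using the strict slack $\epsilon$ in $h\lhd h'$ to absorb the overcounting that arises when local approximations overlap, essentially the same bookkeeping underlying the proof of \cite[Theorem 5.7]{ers}. Once this step is in place, the rest of the argument is the routine inductive chaining described above, together with the closure property of $S_\R$ under sequential suprema.
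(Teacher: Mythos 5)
Your overall architecture is right -- the forward inclusion via Proposition \ref{rapidlycontinuous}, and the reverse inclusion by producing interpolants in $S_\R$ between the terms of a $\lhd$-increasing sequence and using closure of $S_\R$ under sequential suprema. This is also the paper's skeleton (except that the paper interpolates with stride three, $h_1\leq g_1\leq h_4$, rather than between consecutive terms, because its interpolation lemma needs the extra room). The problem is that your proof of the interpolation step, which you correctly identify as the main obstacle, does not work as sketched, and the actual argument needed is of a different nature. A local approximant $\alpha_\lambda\hat s_\lambda$ chosen so that $h<\alpha_\lambda\hat s_\lambda<h'$ near $\lambda$ is a globally defined linear function with no upper control away from that neighbourhood; it may even be infinite elsewhere on $\F(S)$. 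Summing the finitely many approximants from a subcover therefore produces a function that can exceed $h'$ by an unbounded (not merely multiplicative) amount wherever the neighbourhoods overlap or wherever some $\hat s_{\lambda_i}$ is large, and the slack $h\leq(1-\epsilon)h'$ only buys a factor of $1/(1-\epsilon)$, which cannot absorb this. There is also no maximum operation available in the $\Q^+$-span of $\hat S$ (lattice structure only appears later, in Section \ref{riesz}, under the additional axiom O6), so neither sums nor maxima of the local pieces yield the required global $g$. A secondary issue: your local step presumes that for each $\lambda$ one can find $\alpha\hat s$ with $\alpha\hat s(\lambda)$ strictly between $h(\lambda)$ and $h'(\lambda)$, which already fails for functionals taking only the values $0$ and $\infty$, and your covering argument does not address the lower bound $h\leq g$ at points where $h'=\infty$.

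What actually closes the gap in the paper (following \cite[Theorem 5.7]{ers}) is a duality argument rather than a covering argument: Lemma \ref{relunit} shows that $h_1$ can be \emph{uniformly} approximated on a suitable compact set $K\subseteq\F_I(S)$ by functions from the set $P_I\subseteq S_\R$, by supposing otherwise, extracting a separating real measure on $K$, and using the Choquet-type representation of positive linear functionals on $\mathrm{V}(\F_I(S))$ by points of $\F_I(S)$ (Proposition \ref{hahnbanach}) to derive a contradiction; the resulting approximation error $\delta h_3$ is then absorbed by adding a small multiple of a single $\hat t$ supplied by Lemma \ref{ideals} with $h_3\ll\hat t\ll\infty\cdot h_4$. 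None of this machinery is present, even implicitly, in your proposal, so the reverse inclusion remains unproved.
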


Before proving this theorem, we need some preliminary results.

\begin{lemma}\label{ideals}
Let $S$ be an ordered semigroup satisfying \emph{O1-O5}. Let $f,g\in \Lsc(\F(S))$ be such that
$f\lhd g$. Then there exists
$s\in S$ such that $f\ll \hat s\ll \infty \cdot g$.
\end{lemma}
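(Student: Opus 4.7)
The plan is to carry out a Urysohn-type compactness argument on the compact Hausdorff cone $\F(S)$. The topology on $\F(S)$ defined by \eqref{topology} is generated by subbasic open sets of the form $\{\lambda : \lambda(t) > r\}$ for $t \in S$, $r > 0$, so elements of $S$ furnish enough test functions. Using the closure-based characterization of the compact-containment relation in $\Lsc(\F(S))$ from \cite[Proposition 5.1]{ers} that underlies the proof of Lemma \ref{hatwayb}, the conclusions $f \ll \hat{s}$ and $\hat{s} \ll \infty \cdot g$ reduce to inclusions of the form $\overline{\{f > c\}} \subseteq \{\hat{s} > c\}$ and $\overline{\{\hat{s} > c\}} \subseteq \{g > 0\}$ at suitable levels $c > 0$.

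First I would unpack $f \lhd g$: fix $\epsilon > 0$ with $f \leq (1-\epsilon) g$, and recall that $f$ is continuous at each $\lambda$ with $g(\lambda) < \infty$. In particular, $f$ vanishes wherever $g$ does, so $\{f > 0\} \subseteq U := \{g > 0\}$, and $\infty \cdot g$ coincides with the function equal to $\infty$ on the open set $U$ and $0$ off $U$. Next I argue locally: for each $\lambda$ with $g(\lambda) > 0$, the nonvanishing of $\lambda$ as a functional on $S$ yields some $t \in S$ with $\lambda(t) > 0$. I plan to refine such a $t$, using axiom O5, into an element $s_\lambda \in S$ whose hat function strictly exceeds $f$ on a neighborhood $V_\lambda$ of $\lambda$ while simultaneously $\widehat{s_\lambda} \ll \infty \cdot g$ -- this element represents the ``ideal associated to $U$'' hinted at by the label of the lemma. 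Covering the compact sublevel set $\overline{\{f \geq 1/n\}}$ by finitely many such $V_{\lambda_i}$, and letting $s = \sum_i s_{\lambda_i}$ (so that O3, together with the idempotent-like identity $\infty \cdot g + \infty \cdot g = \infty \cdot g$, yields $\hat{s} \ll \infty \cdot g$), gives a candidate; a diagonal argument across $n$, along with a separate treatment of the closed set $\{f = \infty\} \subseteq \{g = \infty\}$ (where an element of the form $\sup_m m t$ for suitable $t$ reaches value $\infty$), then produces the required global $s$.

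The main obstacle is the Urysohn-type local step: given $\lambda \in U$ and some $t \in S$ with $\lambda(t) > 0$, cutting $t$ down to an element $s_\lambda \in S$ whose hat function has closure of support inside $U$. The almost algebraic order axiom O5 is essential here, as it permits decompositions $s' + r \leq t \leq s + r$ which can be used to isolate the portion of $t$ living inside the ideal-like sub-structure of $S$ associated to $U$. Combined with the slack $\epsilon > 0$ from $f \leq (1-\epsilon) g$ and Dini's lemma (Lemma \ref{dini}) to upgrade pointwise to uniform domination on the relevant compact subsets of $\F(S)$, this should produce the desired $s_\lambda$. Once the local construction is in hand, the finite-cover assembly and the passage from local pointwise bounds to global compact-containment statements via Lemma \ref{hatwayb} become routine.
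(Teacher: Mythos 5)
There is a genuine gap at the step you yourself flag as ``the main obstacle'': the local construction of $s_\lambda\in S$ with $\widehat{s_\lambda}$ dominating $f$ near $\lambda$ \emph{and} $\widehat{s_\lambda}\ll\infty\cdot g$. You assert that O5 ``permits decompositions $s'+r\leq t\leq s+r$ which can be used to isolate the portion of $t$ living inside the ideal-like sub-structure of $S$ associated to $U$,'' but O5 produces algebraic complements inside $S$; it gives no mechanism for cutting an element down so that the closure of the support of its hat function lies inside the open set $U=\{g>0\}$. Controlling $\hat s\ll\infty\cdot g$ is a statement about which functionals vanish on $s$, i.e.\ about the ideal $I=\{t\in S\mid \hat t\leq\infty\cdot g\}$, and neither O5 nor Dini's lemma addresses that. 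Since the entire lemma reduces to exactly this step, the proposal does not constitute a proof. The global assembly is also shaky: compact containment in $\infty\cdot g$ is not preserved under the countable suprema implicit in your ``diagonal argument across $n$'' and in the proposed element $\sup_m mt$ on $\{f=\infty\}$, so even granting the local step, producing a single $s$ with $f\ll\hat s\ll\infty\cdot g$ would need more care.

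For contrast, the paper sidesteps any covering argument. The set $\{\lambda\in\F(S)\mid g(\lambda)=0\}$ is closed under addition and under upward directed suprema, hence has a maximum element $\lambda_0$. The family $\{\hat s\mid \lambda_0(s)=0\}$ is then upward directed (closed under addition) and its pointwise supremum is $\infty\cdot g$: if $g(\gamma)\neq 0$ then $\gamma\not\leq\lambda_0$ on some $s$ with $\lambda_0(s)=0$, so $\infty\cdot\hat s(\gamma)=\infty$. Since $f\lhd g$ gives $f\ll\infty\cdot g$ in $\Lsc(\F(S))$, compact containment in a directed supremum yields $s'$ with $\lambda_0(s')=0$ and $f\ll\hat{s'}\leq\infty\cdot g$; choosing $s\ll s'$ with $f\ll\hat s$ and invoking Lemma \ref{hatwayb} gives $\hat s\ll 2\hat{s'}\leq\infty\cdot g$. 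If you want to salvage your approach, the maximum vanishing functional $\lambda_0$ and the directedness of $\{\hat s\mid\lambda_0(s)=0\}$ are precisely the tools that replace your missing Urysohn step.
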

\begin{proof}
Consider the set $\{\lambda\in \F(S)\mid g(\lambda)=0\}$. This set is closed under addition (whence upward directed)
and under upward directed suprema (since $g$ is lower semicontinuous). Therefore, it contains a maximum element $\lambda_0$. The set of functions 
$\{\hat s\mid \lambda_0(s)=0\}$ is closed under addition, whence upward directed. Moreover, the pointwise supremum of these functions is equal to $\infty\cdot g$ 
(if $g(\gamma)\neq 0$ for some functional $\gamma$ then $\gamma(s)>0=\lambda_0(s)$ for some $s\in S$ and so $\infty \cdot\hat s(\gamma)=\infty \cdot g(\gamma)$). Since
$f\lhd \infty \cdot g$, the function $f$ is compactly contained in $\infty\cdot g$, and so there exists $\hat {s'}\in \{\hat s\mid \lambda_0(s)=0\}$ such that $f\ll \hat {s'}\leq \infty \cdot g$. Hence,
there exists $s\ll s'$ such that $f\ll \hat s\ll 2\hat s'\leq \infty \cdot g$. This proves the lemma.
\end{proof}

The following proposition and lemma are analogs 
of \cite[Proposition 5.5]{ers} and  \cite[Lemma 5.6]{ers} (which are stated in the C*-algebraic context). In proving them we will follow the proofs
of those results closely.

Let $I\subseteq S$ be an ideal of $S$, i.e., a hereditary subsemigroup closed under the supremum of increasing sequences. 
Let $\lambda_I\colon S\to [0,\infty]$ denote the functional such that $\lambda_I(s)=0$ if $s\in I$ and $\lambda_I(s)=\infty$ otherwise. 
Finally, let $\F_I(S)\subseteq \F(S)$ be the subcone defined by
\begin{align}\label{fofs}
\F_I(S):=\lambda_I+\{\lambda\in \F(S)\mid \lambda(s)<\infty \hbox{ for all }s\ll s'\hbox{ for some }s'\in I\}.
\end{align}
Notice that $\F_I(S)$ is a cancellative cone: 
if $\lambda_1+\lambda=\lambda_2+\lambda$, with $\lambda_1,\lambda_2,\lambda\in \F_I(S)$
then  $\lambda_1(s)=\lambda_2(s)$ for all $s$ such that $s\ll s'\in I$ for some $s'$. Hence, $\lambda_1(s)=\lambda_2(s)$ for all
$s\in I$ and so $\lambda_1=\lambda_2$ (since both functionals are infinite outside $I$).

\begin{proposition}\label{hahnbanach}
Let $\mathrm{V}(\F_I(S))$ denote the ordered vector space
of linear, real-valued, continuous functions on $\F_I(S)$. Let $\Lambda\colon \mathrm{V}(\F_I(S))\to \R$ be a  positive linear functional 
on  $\mathrm{V}(\F_I(S))$. Then there exists $\lambda\in \F_I(S)$ such that $\Lambda(f)=f(\lambda)$ for all $f\in \mathrm V(\F_I(S))$.
\end{proposition}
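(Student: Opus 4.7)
The plan is to construct a candidate $\mu\in\F_I(S)$ from $\Lambda$ and verify that evaluation at $\mu$ recovers $\Lambda$ on all of $\mathrm{V}(\F_I(S))$, broadly following the template the author cites from \cite[Proposition 5.5]{ers}.

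First, I would extend $\Lambda$ to the cone of nonnegative lower semicontinuous linear functions on $\F_I(S)$ by the envelope
\[
\tilde\Lambda(h):=\sup\bigl\{\Lambda(g):g\in \mathrm{V}(\F_I(S)),\ 0\leq g\leq h\bigr\},
\]
and define $\mu\colon S\to[0,\infty]$ by $\mu(s)=\infty$ if $s\notin I$ and $\mu(s)=\tilde\Lambda(\hat s|_{\F_I(S)})$ if $s\in I$. For $s\ll s'\in I$, the function $\hat s|_{\F_I(S)}$ is lower semicontinuous and finite-valued on $\F_I(S)$ by \eqref{fofs}, and Lemma \ref{ideals} bounds $\tilde\Lambda(\hat s|_{\F_I(S)})$ by the $\Lambda$-value of a dominating continuous function, so $\mu(s)<\infty$ in this case. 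I would then verify that $\mu$ is additive, order preserving, and preserves suprema of increasing sequences: additivity and monotonicity descend from the corresponding properties of $\tilde\Lambda$ together with Proposition \ref{stcomparison} (to translate inequalities $s\leq t$ in $S$ into inequalities of $\hat s,\hat t$ on $\F_I(S)$); for the supremum property, axiom O5 is invoked to produce decompositions $s'+r\leq t\leq s+r$, Lemma \ref{dini} controls residual errors, and Lemma \ref{regularization} furnishes a supremum preserving regularization if the constructed $\mu$ needs to be replaced. This places $\mu$ in $\F(S)$, and the finiteness/infiniteness pattern of $\mu$ puts it in $\F_I(S)$.

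The main obstacle is showing $\Lambda(f)=f(\mu)$ for every $f\in\mathrm{V}(\F_I(S))$, and not merely for generators of the form $\hat s|_{\F_I(S)}$ (on which the equality is immediate by construction). For general $f$, I would argue by a density/separation step: if $\Lambda$ and $\mathrm{ev}_\mu$ disagreed on some $f\in\mathrm{V}(\F_I(S))$, I would reduce to the case of a nonnegative $f$ by sign decomposition, and then approximate $f$ in order from below by finite sums $\sum_i\alpha_i\hat{s_i}|_{\F_I(S)}$ with $s_i\in I$ and each summand continuous, using Lemma \ref{ideals} to produce appropriate elements of $S$ from the building blocks of $f$, and Proposition \ref{rapidlycontinuous} (the $\lhd$-approximation) to guarantee the required continuity on $\F_I(S)$. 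Passing to the limit and comparing $\Lambda$- and $\mathrm{ev}_\mu$-values would yield a contradiction. The delicacy is twofold: continuous functions $\hat s|_{\F_I(S)}$ for $s\in S$ may be scarce, so the approximants really live in $S_\R$ and have to be brought back to finite $S$-level combinations; and since $\F_I(S)$ is a cone and not obviously compact, the approximation has to be carried out carefully with respect to the topology described in \eqref{topology}.
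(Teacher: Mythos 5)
Your proposal takes a genuinely different route from the paper, and unfortunately it has gaps that I do not see how to close with the tools you cite. The paper does not construct $\lambda$ by hand at all: it shows that the relative topology on $\F_I(S)$ coincides with the weak topology $\sigma(\F_I(S),\mathrm{V}(\F_I(S)))$ (this is the real content of its proof, carried out by checking convergence against the functions in the set $P_I$ of \eqref{setP}), observes that $\F_I(S)$ is a weakly complete cancellative cone in Choquet's class $\mathcal S$, and then quotes Choquet's representation theorem \cite[Proposition 30.7]{choquet}. Weak completeness of the cone is what substitutes for the compactness you correctly worry about at the end of your sketch; your construction never invokes anything playing that role.

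Concretely, three steps of your argument do not go through as described. First, additivity of $\mu$: the lower envelope $\tilde\Lambda$ is superadditive for free, but the inequality $\tilde\Lambda(\hat s|_{\F_I(S)}+\hat t|_{\F_I(S)})\leq \tilde\Lambda(\hat s|_{\F_I(S)})+\tilde\Lambda(\hat t|_{\F_I(S)})$ requires decomposing an arbitrary continuous minorant $0\leq g\leq \hat s+\hat t$ into continuous minorants of $\hat s$ and of $\hat t$ separately; this is a Riesz-type decomposition for linear functions on the cone, and neither Proposition \ref{stcomparison} nor axiom O5 supplies it (the refinement machinery of Section \ref{riesz} needs the extra axiom O6 and comes later). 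Second, your final density step is circular as the paper is organized: the statement that a continuous $f$ on $\F_I(S)$ (or on the compact pieces of it) can be approximated by functions built from $S$ via $P_I$ is exactly Lemma \ref{relunit}, whose proof \emph{uses} Proposition \ref{hahnbanach}; you would need an independent proof of that density, which is the hard part. Third, positivity of $\Lambda$ gives monotonicity but not continuity along increasing limits, so even granting the density you cannot pass $\Lambda$ through the monotone approximation on the non-compact cone $\F_I(S)$ without a Dini argument relative to a dominating function whose $\Lambda$-value controls the error --- again precisely the ``relative unit'' device of Lemma \ref{relunit}, not available at this stage. The fix is to abandon the direct construction and instead prove the topological identification the paper proves, so that the abstract Choquet result can be applied.
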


\begin{proof}
We will  show that the  relative topology on $\F_I(S)$ induced by the topology of $\F(S)$ is the weak topology 
$\sigma(\F_I(S),\mathrm{V}(\F_I(S)))$. This will imply that $\F_I(S)$
is a weakly complete cancellative cone in the class $\mathcal S$ of Choquet (see \cite[page~194]{choquet}). 
The proposition will then follow from \cite[Proposition 30.7]{choquet}.

It suffices to show that the  relative topology on $\F_I(S)$ agrees with the topology of pointwise convergence 
on the functions
\begin{align}\label{setP}
P_I:=\{\, f\in S_\R\mid f\lhd f'\ll \hat s\hbox{ for some }f'\in S_\R,s\in I\,\}.
\end{align}

First observe  that $f'\ll \hat s$, with $s\in I$, implies that $f'$ is finite on $\F_I(S)$. Thus, if $f\lhd f'\ll \hat s$ 
then $f$ is continuous on $\F_I(S)$.  

Assume, on the other hand, that   $(\lambda_i)$ is a net in $\F_I(S)$ and that 
$f(\lambda_i)\to f(\lambda)$ for every $f\in P_I$. Let $s',s\in S$ be such that $s'\ll s$ and
let us show that the inequalities \eqref{topology} defining the topology of $\F(S)$ hold true.
If $s\notin I$ then $\lambda_i(s)=\lambda(s)=\infty$ for all $i$ and so the inequalities \eqref{topology} hold trivially.
Suppose that $s\in I$. Let $s''$ be such that $s'\ll s''\ll s$ and let $\epsilon>0$. Since $(1-\epsilon)\widehat {s''}\ll \hat s$, there exist $f_1,f_2\in S_\R$ such that
$(1-\epsilon)\widehat {s''}\ll f_1\lhd f_2\ll \hat s$.  Notice that $f_1\in P_I$. So
\[
(1-\epsilon)\limsup \widehat {s'}(\lambda_i)\leq \limsup_i f_1(\lambda_i)=f_1(\lambda)\leq \widehat s(\lambda).
\]
Passing to the supremum over all $\epsilon>0$  establishes one half of \eqref{topology}.
Also, 
\[
(1-\epsilon)\widehat{s''}(\lambda)\leq f_1(\lambda)=\liminf_i f_1(\lambda_i)\leq \liminf \widehat s(\lambda_i).   
\]
Passing to the supremum over all $s''\ll s$ and $\epsilon>0$ we get the other half of \eqref{topology}.
\end{proof}

\begin{lemma}\label{relunit}
Let $h_1,h_2,h_3\in \Lsc(\F(S))$ be such that $h_1\lhd h_2\lhd h_3$. 
Then for every $\delta>0$ there is $f\in S_\R$
such that $f\leq h_3$ and $h_1\leq \delta h_3+f$. 
\end{lemma}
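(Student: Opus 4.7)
The strategy is to apply Lemma \ref{ideals} in order to replace the abstract lsc function $h_2$ by a concrete element $\hat u \in S_\R$ lying between $h_2$ and a scalar multiple of $h_3$, then to construct $f$ via a Hahn-Banach style density argument on an ideal subcone where the geometry is controlled.

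Concretely, I would first apply Lemma \ref{ideals} to the hypothesis $h_2\lhd h_3$ to obtain $u\in S$ with $h_2\ll \hat u\ll \infty\cdot h_3$. Since $\infty\cdot h_3=\sup_n nh_3$ and the relation $\ll$ in $\Lsc(\F(S))$ is sequential compact containment, there is an integer $n_0\geq 1$ with $\hat u\leq n_0 h_3$. Let $I$ be the ideal of $S$ generated by $u$ and consider the cancellative subcone $\F_I(S)$ from \eqref{fofs}. On $\F_I(S)$ both $\hat u$ and $h_2\leq \hat u$ are finite, and $h_1$ is continuous (by $h_1\lhd h_2$, which gives continuity wherever $h_2$ is finite). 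By the proof of Proposition \ref{hahnbanach}, the relative topology on $\F_I(S)$ is the topology of pointwise evaluation on the set $P_I$, so the elements of $P_I\subseteq S_\R$ are dense in the continuous functions on $\F_I(S)$.

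The construction of $f$ then proceeds by using this density together with the ``margin'' provided by $h_2\lhd h_3$: I would select $f\in P_I$ approximating $h_1$ closely on $\F_I(S)$ while remaining below $h_3$, so that $h_1\leq \delta h_3+f$ holds on $\F_I(S)$ with the slack $\delta h_3$ absorbing the approximation error. The inequality must then be extended to all of $\F(S)$; for $\lambda\in \F(S)\setminus\F_I(S)$, the constraint $\hat u\leq n_0 h_3$ together with the fact that $\lambda$ fails the finiteness condition defining $\F_I(S)$ forces $h_3(\lambda)$ to be large enough (indeed infinite where $\hat u$ is infinite) that the desired inequality becomes automatic.

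\textbf{Main obstacle.} The hardest step is producing a single $f\in S_\R$ that meets both the upper bound $f\leq h_3$ and the lower bound $h_1\leq \delta h_3+f$ simultaneously: density of $P_I$ yields topological approximation but not automatic adherence to the pointwise order constraint $f\leq h_3$. The crucial point is that the permitted error $\delta h_3$ provides exactly the slack needed to bridge this gap, and the strict-containment relations $h_1\lhd h_2\lhd h_3$ (rather than mere $\leq$) are what make it possible to choose $f$ with both properties. This part of the argument is expected to parallel the proof of \cite[Lemma 5.6]{ers}, which treats the analogous statement in the C*-algebraic setting.
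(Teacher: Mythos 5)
Your overall architecture matches the paper's: pass to an ideal $I$ so that $\F_I(S)$ is a cancellative cone on which $h_1$ is continuous, approximate $h_1$ uniformly by elements of $P_I$, and use the slack $\delta h_3$ plus a rescaling to secure both inequalities at once. But there is a genuine gap at the step you dispatch with ``so'': from the fact that the relative topology on $\F_I(S)$ agrees with the topology of pointwise convergence on $P_I$ it does \emph{not} follow that $P_I$ is uniformly dense in the continuous linear functions on $\F_I(S)$. A family of functions can generate the topology of a space without being dense in any function space on it (the single coordinate function on $[0,1]$ already generates the topology there). Establishing this density is the actual content of the lemma, and the paper does it by a separation argument: if $h_1$ were not in the uniform closure of $P_I$ on the compact set $K:=\{\lambda\in\F(S)\mid h_3(\lambda)\leq 1\}+\lambda_I$, there would be a signed measure $m$ on $K$ annihilating $P_I$ with $\int h_1\,dm=1$; the Jordan decomposition $m=m_+-m_-$ gives two positive linear functionals on $\mathrm{V}(\F_I(S))$, which by Proposition \ref{hahnbanach} are evaluations at points $\lambda_+,\lambda_-\in\F_I(S)$; since every $\hat s$ with $s\in I$ is a supremum of an increasing sequence in $P_I$ and elements of $\F_I(S)$ are infinite off $I$, agreement on $P_I$ forces $\lambda_+=\lambda_-$, contradicting $h_1(\lambda_+)=h_1(\lambda_-)+1$. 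Your plan invokes Proposition \ref{hahnbanach} only for the topological identification, not for this representation step, so the key idea of the proof is missing.

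Two secondary points. First, uniform approximation must take place on a compact set, and $\F_I(S)$ need not be compact; the paper's choice of $K$ is exactly what converts $\|h_1-f\|_K<\delta$ into the homogeneous inequalities $h_1\leq f+\delta h_3$ and $f\leq h_1+\delta h_3$, which then extend to all of $\F(S)$ and, after replacing $f$ by $f/(1+\delta)$, yield $f\leq h_3$; working on all of $\F_I(S)$ as you propose leaves the meaning of ``approximating closely'' unquantified. Second, the paper takes $I=\{s\in S\mid \hat s\leq \infty\cdot h_3\}$ directly and uses Lemma \ref{ideals} only to produce $s'\ll s$ in $I$ with $h_2\leq\hat s'$, hence to see that $h_2$ is finite and $h_1$ continuous on $\F_I(S)$; your ideal generated by $u$ would serve the same purpose, so that difference is harmless.
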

\begin{proof}
Let $I:=\{s\in S\mid \hat s\leq \infty \cdot h_3\}$. Observe that $I$ is an ideal of $S$, i.e., it is a hereditary subsemigroup closed under the suprema of increasing sequences. Consider the compact subset $K\subseteq \F(S)$ defined by
\[
K:=\{\,\lambda\in \F(S)\mid h_3(\lambda)\leq 1\,\}+\lambda_I.
\] 
Observe that $K$ is contained in $\F_I(S)$.
Indeed, if $\lambda\in K$ and $s\ll s'\in I$ then $\hat s\propto h_3$, whence $\lambda(s)<\infty$.

The function $h_1$ is continuous on $K$ by hypothesis.
Since $K\subseteq \F_I(S)$, the functions in the set $P_I$ (as defined in \eqref{setP}) are also continuous on $K$. 
Let us show that $h_1$ can be uniformly approximated on $K$ by 
functions in $P_I$. Suppose the contrary. Then there is a real 
measure $m$ on $K$ such that $\int f\, dm=0$ for all $f\in P_I$ and 
$\int h_1\,dm=1$. Let $m=m_+-m_-$ denote the Jordan decomposition of $m$.  Then
$\int f \,dm_+=\int f \,dm_-$ for all $f\in P_I$ and $\int h_1\,dm_+=\int h_1\,dm_-+1$.
Since $K\subseteq \F_I(S)$, we can define positive linear functionals $\Lambda_+,\Lambda_{-}\colon \mathrm{V}(\F_I(S)\to \R$
by 
\[
\Lambda_+(g):=\int_K g\,dm_+ \hbox{ and }\Lambda_{-}(g):=\int_K g\,dm_{-}.
\]
By Proposition \ref{hahnbanach}, $\Lambda_+$ and $\Lambda_-$ are given by the evaluation  on functionals 
$\lambda_+$ and $\lambda_-$  belonging to $\F_I(S)$. 
Thus, $f(\lambda_+)=f(\lambda_-)$ for all $f\in P_I$. Every $\hat s$ with $s\in I$ is the supremum of an increasing sequence of elements of $P_I$. (To see this, find a sequence $(f_n)$ in $S_\R$ such that $f_n\lhd f_{n+1}$ for all $n$ and with supremum $\hat s$. Then $f_n\in P_I$ for all $n$.) Thus,  $\lambda_+(s)=\lambda_-(s)$ for all $s\in I$. Since $\lambda_+$ and $\lambda_-$ are in $\F_I(S)$, they are both infinite outside of $I$. Thus,
$\lambda_{+}=\lambda_{-}$. 

By Lemma \ref{ideals}, there exist $s,s'\in I$ such that $h_2\leq \hat s'$, and $s'\ll s$. It follows that $h_2$ is finite on $\F_I(S)$. 
So $h_1$ is continuous on $\F_I(S)$. In particular,   the restriction of $h_1$ to $\F_I(S)$ belongs to $\mathrm{V}(\F_I(S))$. 
But $h_1(\lambda_+)=h_1(\lambda_{-})+1$. This  contradicts the earlier conclusion $\lambda_+=\lambda_-$.
Therefore, the restriction of  $h_1$ to $K$ must belong to the norm closure of the functions 
in $P_I$. That is, for
every $\delta>0$ there exists  $f\in P_I$ such that $\|h_1-f\|_K<\delta$. Equivalently,
$h_1\leq f+\delta h_3$
and $f\leq h_1+\delta h_3$ on $K$. It is easily shown that these inequalities also  hold on all $\F(S)$. 
Changing $f$ to $f/(1+\delta)$ we can arrange that $f\leq h_3$.
\end{proof}

\begin{proof}[Proof of Theorem \ref{thebidual}]
The inclusion $S_\R\subseteq \LL(\F(S))$ follows from Proposition \ref{rapidlycontinuous}. Let us prove the opposite inclusion.
Let $(h_n)$ be a sequence in $\Lsc(\F(S))$ with supremum $h$ and satisfying $h_n\lhd h_{n+1}$ for all $n$. 
Let $\mu_n>0$ be such that  that $h_n\leq (1-\mu_n)h_{n+1}$ for all $n$.
By Lemma \ref{ideals}, there exists $t\in S$  such that $h_3\ll \hat t\ll \infty \cdot h_4$.  Let us choose $M>0$ such that $\hat t\leq  M h_4$ and 
then $\delta>0$ such that $\delta M < \mu_3$. Finally, using Lemma \ref{relunit}, let us find 
$g\in S_\R$ such that $g\leq h_3$ and $h_1\leq \delta h_3+g$. 

Let $g_1=g+\delta \hat t$. Then $g_1\in S_\R$ and 
\[
g_1= g+\delta \hat t\leq (1-\mu_3+\delta M)h_4\leq h_4.
\]
Also 
\[
g_1= g+\delta \hat t\geq g+\delta h_3\geq h_1.
\] 
So $h_1\leq g_1\leq h_4$. In the same way we may find $g_2\in S_\R$ such that
$h_4\leq g_2\leq h_7$. Continuing in this way we get a sequence $(g_n)$, with $g_n\in S_\R$ and $h=\sup_n g_n$.
Thus, $h\in S_\R$.
\end{proof}

A question left unanswered in these paragraphs is what axioms are needed on a topological
cone $C$ so that the ordered semigroup $\LL(C)$ satisfies axioms O1-O5. Furthermore, one can ask if in such a case  $C$ is recovered by passing to the cone of functionals $\F(\LL(C))$. 

\begin{problem}
Describe the category of non-cancellative cones dual to the category
of ordered semigroups that satisfy axioms  O1-O5 and have real multiplication.
\end{problem}

\subsection{Almost algebraic order of $S_\R$.}
Here we show that  $S_\R$ has almost algebraic order (thus completing the proof of Proposition \ref{SRinCu}). We will show that, in fact, $S_\R$ has the following strengthening of the  almost algebraic order property:

\begin{proposition}\label{algordercont}
Let $S$ be an ordered semigroup satisfying axioms \emph{O1-O5}. Let $f',f,g\in S_\R$ be such that  $f'\ll f\leq g$. 
Then there exist $h,h'\in S_\R$ such that $f'\ll h\ll f$ and $h+h'=g$. 
\end{proposition}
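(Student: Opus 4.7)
The plan is to construct $h$ first using the rapid continuity structure of $f$, then define $h'$ as a pointwise difference $g-h$, and finally identify $h'$ as an element of $S_\R$ via the characterization $S_\R = \LL(\F(S))$ of Theorem \ref{thebidual}.

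\emph{Constructing $h$.} I would apply Proposition \ref{rapidlycontinuous} to $f$ to obtain a rapidly $\lhd$-increasing sequence $(h_n)$ in $S_\R$ with $\sup_n h_n = f$. Since $f' \ll f$, there is some $n_0$ with $f' \leq h_{n_0}$, and I set $h := h_{n_0+1}$. The chain $f' \leq h_{n_0} \lhd h \lhd h_{n_0+2} \leq f$ then yields $f' \ll h \ll f$ via properties (i) and (ii) of the relation $\lhd$. A key byproduct is that $h$ is continuous on $\{h_{n_0+2} < \infty\}$, a set containing $\{f < \infty\}$ and therefore $\{g < \infty\}$ (since $f \leq g$).

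\emph{Defining $h'$ and verifying $h + h' = g$.} Since $h \lhd f \leq g$, we have $h \leq (1-\epsilon) g$ for some $\epsilon > 0$. I define $h' \colon \F(S) \to [0,\infty]$ pointwise by
\[
h'(\lambda) :=
\begin{cases}
g(\lambda) - h(\lambda), & g(\lambda) < \infty,\\
\infty, & g(\lambda) = \infty,
\end{cases}
\]
so that $h + h' = g$ pointwise and $h' \geq \epsilon g$ everywhere. The function $h'$ is lower semicontinuous: on $\{g < \infty\}$ it is the difference of the lsc function $g$ and the continuous function $h$; and at any $\lambda_0$ with $g(\lambda_0) = \infty$ the inequality $h' \geq \epsilon g$ and the lsc of $g$ force $\liminf_{\lambda \to \lambda_0} h'(\lambda) = \infty$.

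\emph{Main task: showing $h' \in S_\R$.} By Theorem \ref{thebidual} it suffices to exhibit $h'$ as the supremum of a rapidly $\lhd$-increasing sequence in $\Lsc(\F(S))$. Starting from a rapidly $\lhd$-increasing sequence $(g_n)$ in $S_\R$ with $\sup_n g_n = g$, which I may relabel so that $h \leq g_1$ (using $h \ll g$), I would consider the candidates $k_n(\lambda) := g_n(\lambda) - h(\lambda)$ on $\{g_n < \infty\}$ extended by $\infty$ on $\{g_n = \infty\}$. A direct algebraic manipulation shows the pointwise inequality $k_n \leq (1-\epsilon_n) k_{n+1}$ inherited from $g_n \leq (1-\epsilon_n) g_{n+1}$, and $\sup_n k_n = h'$ follows from $\sup_n g_n = g$ together with the fact that $h$ is fixed.

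\emph{Main obstacle.} The continuity of $k_n$ on $\{k_{n+1} < \infty\} = \{g_{n+1} < \infty\}$, which is what the relation $k_n \lhd k_{n+1}$ demands, requires that $h$ itself be continuous on $\{g_{n+1} < \infty\}$; but a priori the continuity set of $h$ is only known to contain $\{f < \infty\}$, while $\{g_{n+1} < \infty\}$ may strictly exceed $\{g < \infty\}$. I would resolve this either by choosing the two rapid sequences $(h_n)$ for $f$ and $(g_n)$ for $g$ simultaneously by a diagonal argument so that $\{g_{n+1} < \infty\} \subseteq \{h_{n_0+2} < \infty\}$ holds throughout, or alternatively by constructing the approximants $k_n$ directly in $S_\R$ via a Hahn-Banach/Choquet argument modeled on the proof of Lemma \ref{relunit}, working on suitable compact subsets of $\F_I(S)$ for the ideal $I$ generated by $g$. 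Once a valid $\lhd$-increasing sequence is produced, Theorem \ref{thebidual} places $h' \in S_\R$, completing the proof.
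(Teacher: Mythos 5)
Your construction is essentially the paper's: the paper factors the proposition through Lemma \ref{algordercontlemma} (stated for $f\lhd g'\ll g$), whose proof defines exactly your functions $k_n=g_n-h$ on $\{g_n<\infty\}$ (extended by $\infty$), shows $k_n\lhd k_{n+1}$, and invokes $\LL(\F(S))=S_\R$ from Theorem \ref{thebidual}. So the overall route is right, and your verification of linearity, lower semicontinuity, and the inequality $k_n\leq(1-\epsilon_n)k_{n+1}$ all match the paper.

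The one point you flag as the ``main obstacle'' dissolves much more easily than either of your proposed fixes, and this is precisely how the paper handles it. You have $h=h_{n_0+1}\lhd h_{n_0+2}$ with $h_{n_0+2}\ll f\leq g$, hence $h_{n_0+2}\ll g=\sup_n g_n$, so $h_{n_0+2}\leq g_m$ for some $m$. Discard the first $m-1$ terms of $(g_n)$; then $h\lhd h_{n_0+2}\leq g_1$, and property (i) of the relation $\lhd$ gives $h\lhd g_1$. Since $g_1\leq g_n$ for all $n$, the set $\{g_n<\infty\}$ is contained in $\{g_1<\infty\}$, on which $h$ is continuous; moreover $h\leq(1-\epsilon)g_1\leq(1-\epsilon)g_n$, which is what the lower-semicontinuity argument at points where $g_n=\infty$ actually needs (not merely $h\leq(1-\epsilon)g$). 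No diagonal choice of the two rapid sequences is required, and the Hahn--Banach/Choquet machinery of Lemma \ref{relunit} is unnecessary here. With that relabeling inserted, your argument is complete; without it, the claim $k_n\lhd k_{n+1}$ is indeed unjustified, since $\{g_{n+1}<\infty\}$ need not lie in the known continuity set of $h$.
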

This proposition is an immediate consequence of Proposition \ref{rapidlycontinuous} combined with the following lemma:

\begin{lemma}\label{algordercontlemma}
Let  $f,g\in S_\R$ be such that $f\lhd g'\ll g$ for some $g'\in S_\R$. Then there exists $h\in S_\R$ such that
$f+h=g$. The element $h$ may be chosen such that $f\propto h$.
\end{lemma}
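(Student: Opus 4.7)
The plan is to build $h$ by pointwise subtraction on $\F(S)$ and then invoke Theorem~\ref{thebidual}, which identifies $S_\R$ with $\LL(\F(S))$, to conclude $h\in S_\R$. Fix $\epsilon>0$ with $f\leq(1-\epsilon)g'$; since $g'\leq g$, one also has $f\leq(1-\epsilon)g$. Define $h\colon\F(S)\to[0,\infty]$ by $h(\lambda):=g(\lambda)-f(\lambda)$ when $f(\lambda)<\infty$ and $h(\lambda):=\infty$ otherwise. The case $f(\lambda)=\infty$ forces $g(\lambda)=\infty$, so $f+h=g$ holds pointwise on $\F(S)$, and linearity of $h$ is inherited from that of $f$ and $g$. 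From $f\lhd g'\leq g$ and property (i) of $\lhd$ one gets $f\lhd g$, so $f$ is continuous at each $\lambda$ with $g(\lambda)<\infty$. Lower semicontinuity of $h$ is then checked in two cases: where $h$ is finite, $\liminf_i h(\lambda_i)=\liminf_i g(\lambda_i)-f(\lambda)\geq h(\lambda)$ along any $\lambda_i\to\lambda$; where $h(\lambda)=\infty$, the pointwise inequality $h\geq\epsilon g$ together with lower semicontinuity of $g$ forces $h(\lambda_i)\to\infty$. So $h\in\Lsc(\F(S))$ with $f+h=g$.

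It remains to promote $h$ to an element of $S_\R$; by Theorem~\ref{thebidual} it suffices to display it as the supremum of a $\lhd$-increasing sequence in $\Lsc(\F(S))$. Using Proposition~\ref{rapidlycontinuous}, choose $g_1\lhd g_2\lhd\cdots$ in $S_\R$ with $\sup_n g_n=g$. Since $g'\ll g$, there is $n_0$ with $g'\leq g_n$ for $n\geq n_0$, so $f\leq(1-\epsilon)g_n$ for such $n$. For $n\geq n_0$ define $h_n:=g_n-f$ pointwise, with the same $\infty$-convention; the same reasoning as above places $h_n$ in $\Lsc(\F(S))$. If $\mu_n>0$ satisfies $g_n\leq(1-\mu_n)g_{n+1}$, then adding $\mu_n f\geq 0$ on the right gives $g_n\leq(1-\mu_n)g_{n+1}+\mu_n f$, and pointwise rearrangement yields $h_n\leq(1-\mu_n)h_{n+1}$. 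Moreover, at any $\lambda$ with $h_{n+1}(\lambda)<\infty$ we have $g_{n+1}(\lambda)<\infty$, so $g_n$ is continuous at $\lambda$ (from $g_n\lhd g_{n+1}$) and $g'(\lambda)\leq g_{n+1}(\lambda)<\infty$ gives continuity of $f$ at $\lambda$ (from $f\lhd g'$); hence $h_n$ is continuous at $\lambda$. Therefore $h_n\lhd h_{n+1}$, and $\sup_n h_n=h$ from $\sup_n g_n=g$, so $h\in\LL(\F(S))=S_\R$. Finally, the inequality $h\geq\epsilon g\geq\epsilon f$ gives $f\leq Nh$ for any integer $N\geq 1/\epsilon$, i.e.\ $f\propto h$.

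The principal obstacle is the careful handling of $\infty$-valued points in the pointwise subtraction. At a point $\lambda$ where $g_n(\lambda)=\infty$ but $f(\lambda)<\infty$ one cannot simply combine lower semicontinuity of $g_n$ with continuity of $f$; this is precisely where the strict inequality $f\leq(1-\epsilon)g'$ packaged in the hypothesis $f\lhd g'$ becomes essential, since it delivers the uniform pointwise bound $h_n\geq\epsilon g_n$ on $\F(S)$ for all $n\geq n_0$, which, together with lower semicontinuity of $g_n$, forces $h_n(\lambda_i)\to\infty$ along any $\lambda_i\to\lambda$.
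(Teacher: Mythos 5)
Your proof is correct and follows essentially the same route as the paper: define $h_n = g_n - f$ pointwise with the appropriate $\infty$-convention, verify linearity, lower semicontinuity and $h_n\lhd h_{n+1}$ (using the uniform bound $h_n\geq \epsilon g_n$ at points where $g_n$ is infinite), and conclude $h=\sup_n h_n\in \LL(\F(S))=S_\R$ via Theorem~\ref{thebidual}. The only (harmless) deviation is at the end: the paper arranges $f\propto h$ by first solving $f+h'=(1-\epsilon)g$ and then setting $h=h'+\epsilon g$, whereas you observe directly that the $h$ already constructed satisfies $h\geq\epsilon g\geq \epsilon f$, a slight streamlining of the same idea.
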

\begin{proof}
Let $(g_n)$ be a sequence in $S_\R$ such that $g=\sup_n g_n$ and $g_n\lhd g_{n+1}$ for all $n$. We may assume that $g'\leq g_1$,
and so $f\lhd g_1$. Let us define the functions $h_n\colon \F(S)\to [0,\infty]$ by 
\[
h_n(\lambda):=\left \{
\begin{array}{cl}
g_n(\lambda)-f(\lambda) & \hbox{ if }g_n(\lambda)<\infty\\
\infty & \hbox{otherwise}
\end{array}
\right.
\]
It is easily verified that $h_n$ is linear. Let us show that it is also lower semicontinuous. 
Let $(\lambda_i)$ be a net converging to a functional $\lambda$. Suppose that  $g_n(\lambda)<\infty$.
Then $f$ is continuous at $\lambda$. So,
\[
\liminf_i (g_n(\lambda_i)-f(\lambda_i))=\liminf_i g_n(\lambda_i)-f(\lambda)\geq g_n(\lambda)-f(\lambda).
\]
Thus, $h_n$ is lower semicontinuous at $\lambda$. Suppose that $g_n(\lambda)=\infty$. Since $f\leq (1-\epsilon_n)g_n$
for some $\epsilon_n>0$, we have $g_n(\lambda_i)-f(\lambda_i)\geq \epsilon_n g_n(\lambda_i)$ if $g_n(\lambda_i)$ is finite. This implies that
$h_n(\lambda_i)\geq \epsilon_n g_n(\lambda_i)$, whether $g_n(\lambda_i)$ is finite or not. Passing to the limit with respect to $i$ we get that 
$\liminf_i h_n(\lambda_i)\geq \liminf_i \epsilon_ng_n(\lambda_i)=\infty$.
Thus, $h_n$ is lower semicontinuous at $\lambda$. 

Let us now show that $h_n\lhd h_{n+1}$
for all $n$. If $h_{n+1}(\lambda)<\infty$ then $g_{n+1}(\lambda)<\infty$, and so $g_n$ and $f$ are both finite and continuous at $\lambda$. It follows from the definition of  $h_n$ that it is also continuous at $\lambda$. Also, from $g_{n}\leq (1-\epsilon_n)g_{n+1}$ for some $\epsilon_n>0$ and the definition of $h_n$
we easily deduce that $h_n\leq (1-\epsilon_n)h_{n+1}$. It follows that $h_n\lhd h_{n+1}$.

Let $h=\sup_n h_n$. Then $h\in \LL(\F(S))=S_\R$. Since $g_n=f+h_n$ for all $n$, we conclude that $g=f+h$. 
Finally, in order to arrange for $f\propto h$, find $\epsilon>0$ such that $f\lhd g' \ll (1-\epsilon)g$. Find then $h'$ such that $f+h'=(1-\epsilon)g$ and set $h=h'+\epsilon g$.
This concludes the proof of the lemma.
\end{proof}

\section{Refinement and interpolation properties}\label{riesz}
Let $S$ be an ordered semigroup. In this section, in addition to the axioms O1-O5, we assume that $S$ 
satisfies the following axiom:
\begin{enumerate}
\item[\textbf{O6.}] 
If  $s,t,r\in S$ are such that $s\leq r+t$, then for every $s'\ll s$ there  exist $r'$ and $t'$ such that
\begin{align*}
s' \leq r' &+ t',\\
r'\leq r,s\,\, &\hbox{and }t'\leq t,s. 
\end{align*}
\end{enumerate}

\emph{Notation convention.} In order to state multiple inequalities more compactly, we will often use the notation
$a,b,c,\dots \leq x,y,z,\dots$ to mean that every element listed on the left side is less than or equal to every
element listed on the right side.

\begin{lemma}\label{SRO5}
Let $S$ be an ordered semigroup satisfying axioms \emph{O1-O6}. Then $S_\R$ also satisfies \emph{O1-O6}.
\end{lemma}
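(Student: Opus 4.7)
The axioms O1--O5 for $S_\R$ have been established in Propositions \ref{SRinCu} and \ref{algordercont}, so only O6 remains. Fix $f,g,h\in S_\R$ with $f\leq g+h$ and $f'\ll f$; the goal is to produce $g',h'\in S_\R$ with $f'\leq g'+h'$, $g'\leq g,f$, and $h'\leq h,f$. My plan is to approximate the data by rational multiples of elements of $\hat S$, transport the inequality into $S$ via Proposition \ref{stcomparison}, apply O6 in $S$, and lift the resulting decomposition back to $S_\R$.

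First I would cushion the inequalities. Using Proposition \ref{rapidlycontinuous} pick $f_0$ with $f'\ll f_0\lhd f$, and choose $\lhd$-rapidly increasing sequences approximating $g,h$ to obtain $g_1\lhd g$, $h_1\lhd h$ with $f_0\ll g_1+h_1$. Density of $\Q^+\cdot\hat S$ in $S_\R$ then yields $a,b,c\in S$ and $N,M,K\in\N$ such that
\[
f'\ll \hat a/N\ll f_0,\quad g_1\leq \hat b/M\ll g,\quad h_1\leq \hat c/K\ll h,
\]
which together force $\widehat{MKa}\leq \widehat{NKb+NMc}$ in $\hat S$. Next I would manufacture a multiplicative slack in $S_\R$: as in the proof of Proposition \ref{SRinCu}, a rapidly increasing approximation of $\hat a/N$ in $S_\R$ can be taken of the form $((1-1/i)\widehat{a_i}/N)_i$ with $(a_i)$ rapidly increasing in $S$ and $\sup_i a_i=a$. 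Applied to $f'\ll \hat a/N$ this yields $\tilde a=a_{i_0}\ll a$ and $\delta=1/i_0>0$ with $f'\leq (1-\delta)\widehat{\tilde a}/N$. Choosing $a''\in S$ with $\tilde a\ll a''\ll a$, Proposition \ref{stcomparison} applied with slack $\epsilon=\delta$ and element $MKa''\ll MKa$ produces $P,Q\in\N$ with $P/Q>1-\delta$ and the honest $S$-inequality $PMKa''\leq QNKb+QNMc$.

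Now O6 in $S$, applied to this inequality with refinement $PMK\tilde a\ll PMKa''$, yields $r^*,t^*\in S$ with $PMK\tilde a\leq r^*+t^*$, $r^*\leq QNKb,PMKa''$, and $t^*\leq QNMc,PMKa''$. Setting $g':=\widehat{r^*}/(QNMK)$ and $h':=\widehat{t^*}/(QNMK)$ in $S_\R$, the two bounds on $r^*$ translate to $g'\leq \hat b/M\leq g$ and $g'\leq (P/Q)\widehat{a''}/N\leq \hat a/N\leq f$; symmetrically $h'\leq h,f$. Dividing $PMK\widehat{\tilde a}\leq \widehat{r^*}+\widehat{t^*}$ by $QNMK$ gives $(P/Q)\widehat{\tilde a}/N\leq g'+h'$, whence
\[
f'\leq (1-\delta)\widehat{\tilde a}/N\leq (P/Q)\widehat{\tilde a}/N\leq g'+h',
\]
completing the verification of O6.

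The main obstacle is the bookkeeping: the multiplicative slack $P/Q>1-\delta$ coming from Proposition \ref{stcomparison} must exactly absorb the slack $f'\leq (1-\delta)\widehat{\tilde a}/N$ extracted from compact containment in $S_\R$. What makes them interlock is the scalability of $\widehat{\tilde a}/N$ in $S_\R$: it guarantees that $f'\ll \widehat{\tilde a}/N$ automatically yields a quantitative gap, and matching $\epsilon=\delta$ in Proposition \ref{stcomparison} aligns this gap with the slack needed to apply O6 in $S$ and descend back to $S_\R$.
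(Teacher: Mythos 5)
Your proof is correct and follows essentially the same route as the paper: reduce via density to elements of the form $\hat a/N$, $\hat b/M$, $\hat c/K$ in the image of $S$, convert the pointwise inequality into an honest inequality in $S$ with multiplicative slack using Proposition \ref{stcomparison}, apply O6 in $S$, and rescale. The paper compresses your explicit approximation bookkeeping into the one-line remark that it suffices to verify O6 on a dense subsemigroup, but the mechanism (including the reliance on the ratio from Proposition \ref{stcomparison} being taken in $(1-\epsilon,1)$ so that the witnesses land below $f$) is identical.
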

\begin{proof}
We have already shown in Proposition \ref{SRinCu} that $S_\R$ satisfies O1-O5. 
Let $f,g,h\in S_\R$ be such that $f\leq g+h$. In order to prove axiom O6, it suffices to verify that it holds for $f$, $g$, and $h$ belonging to a dense subsemigroup
of $S_\R$. So we may  assume that they all belong to the $\Q^+$-linear span of the image of $S$ in $S_\R$. 
Moreover, multiplying by a sufficiently large integer, we may assume that $f$, $g$ and $h$
belong to the image of $S$ in $S_\R$. So let us suppose that $\hat s\leq \hat r+\hat t$. Let $s''\ll s'\ll  s$.
By Proposition \ref{stcomparison}, given $\epsilon>0$ there exist $M,N\in \N$ such that $M/N>1-\epsilon$ and 
$Ms'\leq Nr+Nt$. Thus, by axiom O6 applied to $S$ there exist $r'$ and $t'$ such that 
\[
\begin{array}{c}
Ms''\leq r'+t',\\
r'\leq Nr,Ms'  \hbox{ and } t'\leq Nt,Ms'. 
\end{array}
\]
Thus, setting $\frac{\hat r'}{N}=g$ and $\frac{\hat t'}{N}=h$, we get that
\[
\begin{array}{c}
(1-\epsilon)\hat s'' \leq g+h,\\
 g\leq \hat r,\hat s \hbox{ and }h\leq \hat t,\hat s.
\end{array}
\]
Since the elements of the form $(1-\epsilon)\hat s''$, with $\epsilon>0$ and $s''\ll s$, are compactly contained in $\hat s$
and have  supremum $\hat s$, the proof is complete.
\end{proof}

In what follows $S_\R$ denotes the realification of an ordered semigroup $S$ that satisfies axioms O1-O6. Since $S_\R$
satisfies the same axioms (by Proposition \ref{algordercont} and Lemma \ref{SRO5}), and $(S_\R)_\R\cong S_\R$, we may alternatively
regard $S_\R$ as an arbitrary ordered semigroup with real multiplication and satisfying axioms O1-O6.

\subsection{Refinement}
The following form of refinement property holds in $S_\R$ and suffices to conclude that $\F(S)$ is a lattice.
\begin{theorem}\label{refinement}
Let $S$ be an ordered semigroup satisfying axioms \emph{O1-O6}.
Let $(f_i)_{i=1}^n$ and $(g_j)_{j=1}^m$ be elements of $S_\R$ such that 
\[
\sum_{i=1}^n f_i\leq \sum_{j=1}^m g_j.\] 
Let $(f_i')_{i=1}^n$ be such that $f_i'\ll f_i$ for all $i$. Then there exist elements $h_{ij}$, with $i=1,2,\dots,n$ and $j=1,2,\dots,m$,  such that 
\begin{align}
f_i'\ll \sum_{j=1}^m h_{i,j}\ll f_i\hbox{ for all }i,\label{hg1}\\
\sum_{i=1}^n h_{i,j}\leq g_j\hbox{ for all }j.\label{hg2}
\end{align}
\end{theorem}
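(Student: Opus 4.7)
The plan is to prove Theorem \ref{refinement} by induction on $m$, keeping $n$ arbitrary at each stage. The case $m=1$ is immediate: choose $h_{i,1}\in S_\R$ with $f_i'\ll h_{i,1}\ll f_i$ using axiom O2 in $S_\R$, and observe that $\sum_i h_{i,1}\leq \sum_i f_i\leq g_1$ automatically, so the single column absorbs the entire left-hand sum.

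For the inductive step from $m-1$ to $m$, I would first thicken the data: using O2 and Proposition \ref{rapidlycontinuous}, pick intermediates $f_i'\ll f_i^\flat\ll f_i^\sharp\lhd f_i$, and set $G=g_1+\cdots+g_{m-1}$. Applying axiom O6 (available in $S_\R$ by Lemma \ref{SRO5}) to $\sum_i f_i\leq g_m+G$ with $s'=\sum_i f_i^\sharp$ produces $r,t\in S_\R$ with $\sum_i f_i^\sharp\leq r+t$, $r\leq g_m$, $r\leq\sum_i f_i$, $t\leq G$, and $t\leq\sum_i f_i$. Next I would allocate column $m$ by splitting $r\leq\sum_i f_i$ via the single-row refinement (the $(1,n)$ instance of the theorem, established in parallel by induction on $n$ through the same combination of O6 and almost algebraic order) into $h_{i,m}\leq f_i$ with $\sum_i h_{i,m}\leq r\leq g_m$, the sum $\sum_i h_{i,m}$ approximating $r$ from below in $\ll$. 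Using Lemma \ref{algordercontlemma}, I would then produce $\widetilde{f_i}\in S_\R$ with $h_{i,m}+\widetilde{f_i}=f_i$ \emph{exactly}, after minor $\lhd$-adjustments of the $h_{i,m}$ supplied by Proposition \ref{rapidlycontinuous}. Summing the exact equations and comparing against $t+\sum_i h_{i,m}$ shows that $\sum_i\widetilde{f_i}$ is dominated by a quantity compactly contained in $G$. The inductive hypothesis, applied to the reduced problem $\sum_i\widetilde{f_i}\leq G=\sum_{j<m} g_j$ with appropriate primed versions $\widetilde{f_i}'\ll\widetilde{f_i}$, then yields the remaining $h_{i,j}$ for $j<m$. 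Finally, combining the $h_{i,m}$ with the $h_{i,j}$ produces the desired tableau: the row sum is $\sum_j h_{i,j}=h_{i,m}+\sum_{j<m} h_{i,j}$, which by the exact equality $h_{i,m}+\widetilde{f_i}=f_i$ and the inductive $\ll$-bound $\sum_{j<m} h_{i,j}\ll\widetilde{f_i}$ is compactly contained in $f_i$.

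The main obstacle is securing the strict upper bound $\sum_j h_{i,j}\ll f_i$: axiom O6 alone produces $r,t\leq s$ individually but in general does \emph{not} bound $r+t$ by $s$, so a naive iteration of O6 cannot deliver this containment. The essential remedy is the almost algebraic order of $S_\R$: Lemma \ref{algordercontlemma} converts O6's approximate inequalities into exact sum decompositions $h_{i,m}+\widetilde{f_i}=f_i$ at each inductive step, and it is only the exactness of this step that keeps the tally of allocated mass under control. Threading the $\ll$ and $\lhd$ relations carefully---using Proposition \ref{rapidlycontinuous} to supply $\lhd$-rapidly increasing sequences whenever Lemma \ref{algordercontlemma} is invoked, so that its hypothesis $h_{i,m}\lhd g'\ll f_i$ is met---ensures that the final row sums remain strictly below $f_i$, while axiom O3 propagates the lower bound $f_i'\ll\sum_j h_{i,j}$ from the intermediates $f_i^\flat,f_i^\sharp$.
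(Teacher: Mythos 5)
Your architecture---peel off column $m$ via O6, allocate it across the rows by a single-row refinement, take exact complements $h_{i,m}+\widetilde{f_i}=f_i$ via Lemma \ref{algordercontlemma}, and recurse on $\sum_i\widetilde{f_i}\leq G:=\sum_{j<m}g_j$---has a genuine gap at the sentence ``summing the exact equations and comparing against $t+\sum_i h_{i,m}$ shows that $\sum_i\widetilde{f_i}$ is dominated by a quantity compactly contained in $G$.'' To extract $\sum_i\widetilde{f_i}\leq G$ (or a primed version of it) you must cancel $\sum_i h_{i,m}$ from $\sum_i h_{i,m}+\sum_i\widetilde{f_i}=\sum_i f_i$, and this requires two things you do not have: (a) an inequality of the form $r\leq\sum_i h_{i,m}+e$ with a controlled error $e$ (the single-row refinement gives $\sum_i h_{i,m}\leq r$, i.e.\ the wrong direction, so a Dini-type estimate is unavoidable); and (b) the error $e$ must be absorbable into the slack of the columns $g_1,\dots,g_{m-1}$, since the recursion needs $\sum_i\widetilde{f_i}{}'\leq G$ on the nose. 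Lemma \ref{dini} produces errors only of the form $\epsilon c$ with $r\propto c$, and the available choices are $c=r$, $c=g_m$, or $c=\sum_i f_i$---none of which is $\propto G$ in general. Since order in $S_\R$ is pointwise on $\F(S)$, a bound such as $\sum_i\widetilde{f_i}\leq(1+\epsilon)G+\epsilon g_m$ valid for every $\epsilon>0$ does not yield $\leq G$ (consider $\lambda$ with $G(\lambda)<\infty=g_m(\lambda)$). Relatedly, you invoke the $(1,n)$ instance as ``established in parallel by the same combination of O6 and almost algebraic order,'' but that instance is where the whole difficulty is concentrated, and the same absorption problem already blocks the $(1,2)$ case if attempted as you describe.

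The paper resolves precisely this obstruction by two devices absent from your plan: (i) it first proves the $(1,2)$ case under the auxiliary hypothesis $f\propto g_2$, so that the Dini error $\epsilon g_2$ is absorbed into the gap between $g_2'\leq(1-\epsilon)g_2$ and $g_2$, and it then removes the hypothesis by the padding trick $l\leq g_1'+g_2'+\tfrac{\epsilon}{2}(g_1'+g_2')$ with $l\propto\tfrac{\epsilon}{2}(g_1'+g_2')$; and (ii) it inducts on the number of \emph{rows}, so that when row $1$ is peeled off the O5-slack $h_j'+g_j'\leq g_j\leq h_j+g_j'$ sits in the very column where it must be reabsorbed, and the cancellation is of $f_1$ (made legitimate by arranging $g_j\propto g_j'$). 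Note finally that the exactness of $h_{i,m}+\widetilde{f_i}=f_i$ controls only the row-sum upper bound $\sum_j h_{i,j}\leq f_i$; it contributes nothing to the column-sum bound needed to launch the recursion, so your diagnosis that exact complementation ``keeps the tally of allocated mass under control'' locates the key mechanism in the wrong place.
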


\begin{proof}
Notice that it suffices to prove the theorem with the inequality relation $\leq$ in place of the
compact containment relation $\ll$ in \eqref{hg1}. Once the inequalities are obtained, the compact containment
is easily arranged by finding interpolating elements $f_i'\ll f_i''\ll f_i'''\ll f_i$ and applying the theorem,
with inequality relations, for the pairs $f_i''\ll f_i'''$.

Let us first prove the theorem for $n=1$ and $m=2$. Let $f',f,g\in S_\R$ be such that $f'\ll f\leq g_1+g_2$. 
Let us assume that $f\propto  g_2$. By Proposition \ref{rapidlycontinuous}, there exists $l$ such that 
$f'\ll l\lhd f$. 
Let $\epsilon>0$ be such that $l\leq (1-\epsilon)f$. Since $l\lhd f$, for any $\delta>0$ we can apply Lemma \ref{dini} to get $l'$ such that $f'\ll l'\lhd l$
and $l\leq l'+\delta f$. Since $f\propto g_2$, we can choose $\delta$ small enough so that $l\leq l'+\epsilon g_2$. In summary, we first find $l$ and $\epsilon>0$ such that $f'\ll l\lhd (1-\epsilon)f$ and then find $l'$ such that $f'\ll l'\lhd l$ and $l\leq l'+\epsilon g_2$.

By axiom O6 applied to
\[
l'\ll l\leq (1-\epsilon)g_1+(1-\epsilon)g_2,
\]
there exist $g_1'$ and $g_2'$ such that $l' \ll g_1' +g_2'$ and
\begin{align*}
g_1' &\leq (1-\epsilon)g_1,l,\\
g_2' &\leq (1-\epsilon)g_2,l. 
\end{align*}
Let us choose $h_1\ll g_1'$ such that $l'\leq h_1+g_2'$. Since $g_1'\leq l$,
by Proposition \ref{algordercont} we may choose $h_1$ that is algebraically complemented in $l$, i.e., such that there exists $h_2$ such that $l=h_1+h_2$.
Then
\[
h_1+h_2=l\leq l'+\epsilon g_2\leq h_1+g_2'+\epsilon g_2\leq h_1+g_2.
\] 
Since $h_1\leq f\propto g_2$, we can cancel $h_1$ to obtain $h_2\leq g_2$. This proves the case $n=1$, $m=2$ of the theorem under the assumption 
that $f\propto g_2$.

It follows by induction that  if $f'\ll f\leq \sum_{i=1}^n g_i$, and $f\propto g_n$ then there exist $(h_i)_{i=1}^n$
such that $f'\leq \sum_{i=1}^n h_i\leq f$ and $h_i\leq g_i$ for all $i$. 

Let us now go back to the case $n=1$ and $m=2$ and remove the assumption $f\propto g_2$. 
Suppose again that $f'\ll f\leq g_1+g_2$. Let $l$ and $\epsilon>0$ be such that $f'\ll l\ll (1-\epsilon)f$.
By axiom O6 there exist $g_1'$ and $g_2'$ such that $l\leq g_1'+g_2'$ and
\begin{align*}
g_1'&\leq (1-\epsilon)g_1,(1-\epsilon)f,\\
g_2'&\leq (1-\epsilon)g_2,(1-\epsilon)f. 
\end{align*}
Then we trivially have $l\leq g_1'+g_2'+\frac{\epsilon}{2}(g_1'+g_2')$ and $l\propto \frac{\epsilon}{2}(g_1'+g_2')$. So, 
there exist $h_1'$, $h_2'$ and $h_3'$ such that
\begin{align*}
f'  &\leq h_1'+h_2'+h_3'\leq l,\\
h_1'\leq g_1',\,h_2' &\leq g_2',\hbox{ and }h_3'\leq \frac{\epsilon}{2}(g_1'+g_2'). 
\end{align*}
Set $h_1'+\frac{\epsilon}{2} g_1'=h_1$ and $h_2'+\frac{\epsilon}{2} g_2'=h_2$. Then $h_1\leq g_1$, $h_2\leq g_2$, and $f'\leq h_1+h_2$.
Also, 
\[
h_1+h_2=h_1'+h_2'+\frac{\epsilon}{2}(g_1'+g_2')\leq l+\frac{\epsilon}{2}(g_1'+g_2').
\]
But $l\leq (1-\epsilon)f$ and $\frac{\epsilon}{2}(g_1'+g_2')\leq \epsilon f$. So, $h_1+h_2\leq f$.
This proves the theorem for  $n=1$ and $m=2$.

The reader may verify that
the case $n=1$ and arbitrary $m$ now follows by induction, building on the case that was just established. 

Finally, let us consider the general case of the theorem.
Let us assume that the theorem has been proved for certain $n$ and $m$ and then show that it is also valid for $n+1$ and $m$.
Suppose that $\sum_{i=1}^{n+1} f_i\leq \sum_{j=1}^m g_j$ and let $f_1'\ll f_1$. Then there exist $(h_j)_{j=1}^m$
such that $f_1'\ll \sum_{j=1}^m h_{j}\leq f_1$ and $h_{j}\leq g_j$ for all $j$. For each $j$ let us find $h_{j}'\ll h_{j}$ and $g_j'$ such that
$h_{j}'+g_j'\leq g_j\leq h_{j}+g_j'$ and $f_1'\leq \sum_{j=1}^m h_{j}'$. Then
\[
f_1+ \sum_{i=2}^{n+1} f_i\leq \sum_{j=1}^{m} g_j\leq \sum_{j=1}^m h_{j}+\sum_{j=1}^m g_j'\leq f_1+\sum_{j=1}^m g_j'. 
\] 
Thus, 
\begin{align}\label{f1sum}
f_1+ \sum_{i=2}^{n+1} f_i \leq f_1+\sum_{j=1}^m g_j'
\end{align}

 By Lemma \ref{algordercontlemma}, the elements $g_j'$ may be chosen such that $g_j\propto g_j'$. 
So $f_1\propto \sum_{j=1}^m g_j'$ and we can cancel $f_1$ on both sides of \eqref{f1sum}. 
By induction, there exist $(h_{i,j})$, $i=2,\dots,n+1$, $j=1,\dots,m$,  such that $f_i'\ll \sum_{j=1}^m h_{i,j}\ll f_i$
for $i=2,3,\dots,n+1$ and $\sum_{i=1}^n h_{i,j}\leq g_j$ for all $j$. We now set $h_{1,j}=h_j'$. The elements $h_{i,j}$
have the desired properties. This completes the induction.  
\end{proof}

\begin{theorem}
Let $S$ be an ordered semigroup satisfying axioms \emph{O1-O6}. Then
$\F(S)$ is a complete distributive lattice. Furthermore, addition is distributive with respect to  $\wedge$ and $\vee$:
\begin{align}\label{ident1}
(\lambda_1\vee \lambda_2)+\lambda_3 &=(\lambda_1+\lambda_3)\vee (\lambda_2+\lambda_3),\\
(\lambda_1\wedge \lambda_2)+\lambda_3 &=(\lambda_1+\lambda_3)\wedge (\lambda_2+\lambda_3)\label{ident2}
\end{align}
\end{theorem}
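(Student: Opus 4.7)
The plan is to pass to $\F(S_\R)$ via the isomorphism in Proposition \ref{SRinCu}, since $S_\R$ satisfies \emph{O1}--\emph{O6} (Lemma \ref{SRO5}) and enjoys real multiplication. For $\lambda_1,\lambda_2\in \F(S_\R)$, my candidates for the binary lattice operations will be the supremum-preserving regularizations (in the sense of Lemma \ref{regularization}) of the classical Riesz formulas
\[
\mu(f):=\inf\{\lambda_1(a)+\lambda_2(b) : a+b\geq f\},\qquad \nu(f):=\sup\{\lambda_1(a)+\lambda_2(b) : a+b\leq f\},
\]
which I shall denote $\tilde\mu$ and $\tilde\nu$. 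The aim is to show $\tilde\mu=\lambda_1\wedge \lambda_2$ and $\tilde\nu=\lambda_1\vee\lambda_2$.

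The main obstacle is proving additivity of $\tilde\mu$ and $\tilde\nu$; note that $\mu$ and $\nu$ need not themselves be additive, only their regularizations are. The refinement property (Theorem \ref{refinement}) is the key tool. For the inequality $\tilde\mu(f)+\tilde\mu(g)\leq \tilde\mu(f+g)$: given $u_1+u_2\geq f+g$ and approximations $f''\ll f'\ll f$, $g''\ll g'\ll g$, refinement supplies $h_{ij}\in S_\R$ with $f''\leq h_{11}+h_{12}$, $g''\leq h_{21}+h_{22}$, $h_{11}+h_{21}\leq u_1$ and $h_{12}+h_{22}\leq u_2$, so that
\[
\lambda_1(u_1)+\lambda_2(u_2) \geq \mu(f'')+\mu(g'');
\]
taking successive suprema over $f'',g''$ and $f',g'$, and then the infimum over $u_1,u_2$, yields the inequality. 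The reverse $\tilde\mu(f+g)\leq \tilde\mu(f)+\tilde\mu(g)$ is easier: any $h\ll f+g$ admits $f'\ll f$, $g'\ll g$ with $h\leq f'+g'$ (using $f+g=\sup_{f',g'}(f'+g')$), and then subadditivity of $\mu$ combined with $\mu(f')\leq \tilde\mu(f)$, $\mu(g')\leq \tilde\mu(g)$ gives $\mu(h)\leq \tilde\mu(f)+\tilde\mu(g)$. The argument for $\tilde\nu$ is parallel, applying refinement to $u_1+u_2\leq f+g$ instead. That $\tilde\mu$ and $\tilde\nu$ realise the meet and join is then a direct verification against an arbitrary $\kappa\in \F(S_\R)$ with $\kappa\leq \lambda_1,\lambda_2$ (resp.\ $\geq$), exploiting the sup-preservation of $\kappa$.

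For completeness, I would take, for an arbitrary family $\{\lambda_i\}_{i\in I}$, the pointwise directed supremum of the finite joins $\bigvee_{i\in F}\lambda_i$. Directed suprema commute with addition in $[0,\infty]$, so the result is additive, monotone, and sup-preserving, hence a functional and the least upper bound; arbitrary infima are handled by regularizing the pointwise infimum of the directed family of finite meets.

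Finally, for the identities \eqref{ident1} and \eqref{ident2}: one direction of each is immediate from monotonicity and the explicit formulas for $\tilde\mu,\tilde\nu$, and the reverse uses Proposition \ref{algordercont} (almost algebraic order on $S_\R$) to upgrade sub-decompositions $a+b\leq f$ with $a+b\ll f$ into exact decompositions summing to $f$, after which one approximates by rapid containment. Lattice distributivity---the statement that $\F(S)$ is a distributive lattice---then follows by a standard calculation combining \eqref{ident1} and \eqref{ident2} with the cancellation afforded by $\tilde\mu+\tilde\nu = (\tilde\mu\vee\tilde\nu)+(\tilde\mu\wedge\tilde\nu)$, a consequence of the same additivity and refinement used above.
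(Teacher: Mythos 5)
Your proposal is essentially the paper's own proof: pass to $S_\R$, define the binary join and meet via the Kantorovich-type sup/inf formulas, use the refinement property of Theorem \ref{refinement} to make their supremum-preserving regularizations additive (hence functionals, by Lemma \ref{regularization}), and obtain completeness from pointwise directed suprema of finite joins. The one step to treat with care is your final derivation of lattice distributivity: $\F(S)$ is not a cancellative cone (functionals may take the value $\infty$), so the standard argument via $\lambda_1+\lambda_2=(\lambda_1\vee\lambda_2)+(\lambda_1\wedge\lambda_2)$ plus cancellation does not apply directly; the paper instead deduces distributivity from the identities \eqref{ident1} and \eqref{ident2} alone, citing \cite[Proposition 3.4]{ers}.
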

\begin{proof}
Since $\F(S)\cong \F(S_\R)$, it suffices to prove the same properties for $\F(S_\R)$ (or alternatively, to assume that  $S$ has real multiplication). 
The pointwise supremum of an  upward directed set of functionals is also a functional, and so the supremum of the set. Thus, in order to show
that $\F(S_\R)$ is a complete lattice, it suffices to show that any two functionals have a least upper bound.

Let $\lambda_1$ and $\lambda_2$ be in $\F(S_\R)$. Let us define $\lambda\colon S_\R\to [0,\infty]$ by
\begin{align}\label{kanto1}
\lambda(f):=\sup \{\lambda_1(f_1)+\lambda_2(f_2)\mid f_1+f_2\leq f\}.
\end{align}
That $\lambda$ is sub-additive follows from general considerations. The inequality $\lambda(f)+\lambda(g)\leq \lambda(f+g)$
follows from the refinement property obtained in Theorem \ref{refinement}. Thus, $\lambda$ is additive.
It is clear that $\lambda$ is the least upper bound of $\lambda_1$ and $\lambda_2$ among all the ordered semigroup
maps from $S_\R$ to $[0,\infty]$. Let $\tilde \lambda$ denote the supremum preserving regularization of $\lambda$. That is, $\tilde\lambda(f):=\sup_{f'\ll f}\lambda(f') $. Then
$\lambda$ is a functional  on $S_\R$ (see \cite[Lemma 4.7]{ers}) and 
$\lambda_1\vee \lambda_2=\tilde \lambda$.  

The identity \eqref{ident1} follows from the fact that $\lambda_1\vee \lambda_2$ is the lower semicontinuous regularization of the functional given by \eqref{kanto1}. The reader is referred
to the proof of \cite[Theorem 3.3]{ers} for the details of this argument.
Similarly, in order to prove \eqref{ident2} we need a Kantorovich-type formula for $\lambda_1\wedge\lambda_2$.  Consider the map $\lambda\colon S_\R\to [0,\infty]$
defined by
\[
\lambda(f):=\inf\{\lambda_1(f_1)+\lambda_2(f_2)\mid f\leq f_1+f_2\}.
\]
That $\lambda$ is sub-additive follows again from general considerations.
The refinement property of Theorem \ref{refinement} can then be used to show that
\[
\lambda(f')+\lambda(g')\leq \lambda(f+g),
\]
for all $f'\ll f$ and $g'\ll g$. It follows that $\tilde\lambda(f):=\sup_{f'\ll f} \lambda(f')$
is additive. Moreover, proceeding as in the proof of \cite[Lemma 4.7]{ers} we get that $\tilde\lambda$
is a functional on $S_\R$. If $\gamma\in \F(S_\R)$ is such that $\gamma\leq \lambda_1,\lambda_2$ then
clearly $\gamma\leq \lambda$. Since $\gamma(f)=\sup_{f'\ll f}\gamma(f')$, we also have that $\gamma\leq \tilde \lambda$. Therefore, $\tilde\lambda=\lambda_1\wedge \lambda_2$. 
Identity\eqref{ident2} can now be derived  proceeding as in the proof of \cite[Theorem 3.3]{ers}. Finally, the identities
\eqref{ident1} and \eqref{ident2} imply that $\F(S_\R)$ is a  distributive lattice  (by \cite[Proposition 3.4]{ers}).
\end{proof}

\subsection{Interpolation}
Here we show that if $S$ satisfies axioms O1-O6 and has a countable dense subset then there exists a greatest lower bound $f\wedge g$ for any two elements  $f,g\in S_\R$.  
\begin{lemma}
Let $f,g\in S_\R$ with $f\propto g$. Then the set of elements $h\in S_\R$ such that
$h\ll h'\leq f,g$
for some $h'$, is an upward directed set.
\end{lemma}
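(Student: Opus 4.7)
The plan is to show directedness by producing, for any two given elements $h_1,h_2$ of the set (with witnesses $h_i\ll h_i'\leq f,g$), an element $h$ of the set satisfying $h_1,h_2\leq h$. As a first reduction, it is enough to exhibit an element $h^*\in S_\R$ with $h_1',h_2'\leq h^*\leq f,g$: from such an $h^*$, the compact containments $h_i\ll h_i'\leq h^*$ give $h_i\ll h^*$, and axiom O2 then furnishes a term of a rapidly increasing sequence with supremum $h^*$ that dominates both $h_1$ and $h_2$ and lies in our set (with witness the next term of the sequence).

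To construct such an interpolant $h^*$, I would combine Proposition~\ref{algordercont} (almost algebraic order in $S_\R$) with axiom O6 (weak Riesz decomposition). Concretely, pick intermediate elements $h_1\ll k\ll h_1'$; apply Proposition~\ref{algordercont} to $k\ll h_1'\leq f$ to write $f=p+q$ with $k\ll p\ll h_1'$. Since $h_2'\leq f=p+q$, axiom O6 yields, for any $h_2''\ll h_2'$, a decomposition $h_2''\leq a+b$ with $a\leq p,\,h_2'$ and $b\leq q,\,h_2'$. Then $p+b\leq p+q=f$, and $p+b$ dominates $k$ (since $p\geq k$) and an approximation to $h_2'$ (since $p+b\geq a+b\geq h_2''$). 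A symmetric decomposition $g=p'+q'$ produces a candidate lying below $g$.

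The main obstacle is to reconcile the candidates under $f$ and under $g$ into a single element bounded above by both. This is where the hypothesis $f\propto g$ should be essential: it provides the form of cancellation needed to patch the two decompositions together. The idea is to iterate Theorem~\ref{refinement} dyadically (in the spirit of the proof of Proposition~\ref{rapidlycontinuous}), at each stage splitting the remaining error term simultaneously with respect to both $f$ and $g$ and using $f\propto g$ to absorb that error into the next refinement of $q$ and $q'$. Taking a limit (using that $S_\R$ is closed under suprema of rapidly increasing sequences, by Proposition~\ref{SRinCu}) then yields the interpolant $h^*\in S_\R$ with $h_1',h_2'\leq h^*\leq f,g$. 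The hardest technical step is precisely this synchronization of the two decompositions, for which the proportionality $f\propto g$ is what keeps the iterative corrections summable and convergent.
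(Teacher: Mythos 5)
There is a genuine gap, and you have in fact located it yourself: the ``synchronization of the two decompositions'' is not a technical afterthought but the entire content of the lemma, and your proposal leaves it as a sketch. What you actually establish is that there is a candidate $p+b\leq f$ dominating $k$ and $h_2''$, and (symmetrically) a candidate below $g$; nothing in the argument makes either candidate lie below \emph{both} $f$ and $g$. The proposed fix --- iterating Theorem~\ref{refinement} dyadically and ``absorbing the error'' using $f\propto g$ --- is not carried out, and it is not clear it can be: since $S_\R$ is not cancellative, there is no general way to ``subtract'' the overshoot of a candidate above $g$, and the refinement theorem by itself only splits elements along a given sum, it does not merge two upper-bound constraints. (A smaller point: your first reduction asks for $h^*\geq h_1',h_2'$, which is stronger than necessary; it suffices to dominate interpolants $h_i\ll h_i''\ll h_i'$, and that is all the paper's method delivers.)

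The missing idea in the paper's proof is to work with \emph{algebraic complements} rather than with the elements themselves, and to apply O6 to an inequality between complements. Concretely: choose $p_1,q_1$ with $h_1\ll p_1\lhd (1-\epsilon)f,(1-\epsilon)g$ and $h_2\ll q_1\lhd(1-\epsilon)f,(1-\epsilon)g$, and use Lemma~\ref{algordercontlemma} to write $p_1+p_f=q_1+q_f=(1-\epsilon)f$ and $q_1+q_g=(1-\epsilon)g$ with $g\propto q_g$. Comparing the two expressions for $(1-\epsilon)f$ and cancelling $q_1$ gives $q_f\leq q_g+p_f$. Now O6 applied to $q_f'\ll q_f\leq q_g+p_f$ (where $q_f\leq q_f'+\epsilon_0 f$ comes from $q_f\lhd f$ and Lemma~\ref{dini}) splits $q_f'$ into a piece $r'\leq q_g$ and a piece $t'\leq p_f$. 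The element $r_2=r'+q_1$ then sits below both $(1-\epsilon)f$ and $(1-\epsilon)g$ by construction of the complements, dominates $q_1$, and dominates $p_1$ up to the controllable error $\epsilon_0 f$; the hypothesis $f\propto g$ enters only at the very end, to choose $\epsilon_0$ with $\epsilon_0 f\leq\epsilon g$ so that $r_1=r_2+\epsilon_0 f$ still lies below $f$ and $g$. This single application of O6 to the complement inequality is what replaces your iterative scheme, and the relation $\lhd$ (continuity where the bound is finite) together with Dini's lemma is what makes the error term $\epsilon_0 f$ available in the first place. Without these two ingredients your argument cannot be completed as written.
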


\begin{proof}
Let $p$ and $q$ be elements of $S_\R$ such that $p\ll p'\leq f,g$ and $q\ll q'\leq f,g$.
Writing $p'$ as the supremum of a rapidly increasing sequence as in Proposition \ref{rapidlycontinuous}, we can find $p_1$ and
$p_2$ such that $p\ll p_1\lhd p_2\ll p'$. Similarly, we find $q_1$ and $q_2$ such that $q\ll q_1\lhd q_2\ll q$. 
In order to prove the lemma, it suffices to find $r_1\in S_\R$ such that $p_1,q_1\leq r_1\leq f,g$, for then
there exists $r$ such that $p,q\leq r\ll r_1\leq f,g$.

Let us prove the existence of $r_1$ satisfying that $p_1,q_1\leq r_1\leq f,g$. 
In what follows, the relevant properties of $p_1$ and $q_1$ are that
\begin{enumerate}[(i)]
 \item
there exists $\epsilon>0$ such that $p_1,q_1\lhd (1-\epsilon)f,(1-\epsilon)g$, and 
\item
$p_1$ and $q_1$ have algebraic complements in both $(1-\epsilon)f$ and $(1-\epsilon)g$ (this follows from Lemma \ref{algordercontlemma}).
\end{enumerate}
Let us choose $p_f$, $q_f$, and $q_g$ such that 
\begin{align*}
p_1+p_{f}& =q_1+q_f=(1-\epsilon)f,\\
q_1+q_{g} & =(1-\epsilon)g.
\end{align*}
Then
\begin{align*}
q_1+q_{f}=(1-\epsilon)f &=p_1+p_{f}\\
         &\leq (1-\epsilon)g+p_f\\
         &=q_1+q_{g}+p_f.   
\end{align*}
So,
\[
q_1+q_{f}\leq q_1+q_{g}+p_f.
\]
We can choose $q_g$ such that $g\propto q_g$ (see Lemma \ref{algordercontlemma}). Thus, 
we can cancel $q_1$ in the above inequality: 
\[
q_{f}\leq q_{g}+p_f. 
\]
Since $q_f\lhd f$, by Lemma \ref{dini} there exists $q_f'\lhd q_f$ such that 
$q_f\leq q_f'+\epsilon_0 f$, where $\epsilon_0>0$ is small enough (how small will be specified later).
Axiom O6 applied to 
\[
q_f'\ll q_{f}\leq q_{g}+p_f\]
implies that there exist $r'$ and $t'$ such that $q_f'\leq r'+t'$, $r'\leq q_f,q_g$, and $t'\leq q_f,p_f$.
Let us set  $r_2=r'+q_1$. Then we have $q_1\leq r_2$ and $r_2\leq (1-\epsilon)f,(1-\epsilon)g$. As for comparing to $p_1$, we have
\begin{align*}
p_1+p_f=(1-\epsilon)f &=q_1+q_f\\
&\leq q_1+q_f'+\epsilon_0 f\\
&\leq q_1+r' +t'+\epsilon_0 f\\
&\leq r_2+ p_f+\epsilon_0 f.
\end{align*}
So
\[
p_1+p_f\leq r_2+ p_f+\epsilon_0 f
\]
Since $p_f\propto \epsilon_0f$, we can cancel $p_f$:
\[
p_1\leq r_2+\epsilon_0 f.
\] 
Let us choose $\epsilon_0>0$ such that $\epsilon_0\leq\epsilon$ and $\epsilon_0 f\leq \epsilon g$. Its existence is guaranteed by the hypothesis $f\propto g$. Then
$r_1=r_2+\epsilon_0f$ has the desired properties.
\end{proof}

\begin{theorem}
Let $S$ be an ordered semigroup satisfying axioms \emph{O1-O6} and with a countable dense subset.

(i) For each pair $f,g\in S_\R$ there exists a greatest lower bound $f\wedge g$. 

(ii) For any $f\in S_\R$ and any increasing sequence $(g_n)$ in $S_\R$ we have 
that \[
\sup_n (f\wedge g_n)=f\wedge (\sup_n g_n).\]

(iii) For all $f,g,h\in S_\R$ we have that
\[
f\wedge g+h=(f+h)\wedge (g+h).
\]
\end{theorem}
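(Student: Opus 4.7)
For part (i), the plan is to construct $f\wedge g$ as the supremum of an upward directed set. Let $D:=\{h\in S_\R\mid h\ll h'\leq f,g\text{ for some }h'\in S_\R\}$. By axiom O2, any common lower bound $k\leq f,g$ is the supremum of its $\ll$-approximations, and each such approximation lies in $D$; conversely, every element of $D$ is bounded by both $f$ and $g$ through its witness. Hence $\sup D$, once shown to exist in $S_\R$, coincides with $f\wedge g$. To produce the supremum, I would use that $S_\R$ inherits a countable dense subset from $S$: if $\{s_k\}_{k\in\N}$ is countable dense in $S$, then the $\Q^+$-span of $\{\hat s_k\}$ is a countable dense subset of $S_\R$, since every element of $S_\R$ is the supremum of a rapidly increasing sequence drawn from this span. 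Intersecting this countable set with $D$ and using upward directedness to extract a cofinal chain will then yield $f\wedge g$ as the supremum of that chain. The upward directedness of $D$ is supplied by the preceding lemma when $f\propto g$; for arbitrary $f,g$, I would attempt a reduction using that for $p,q\in D$ with witnesses $p',q'$, the element $u:=p'+q'$ satisfies $u\leq 2f$ and $u\leq 2g$, so $u\propto f$ and $u\propto g$, and then applying the lemma within the ideal generated by $u$. Making this reduction precise is the main obstacle.

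For part (ii), the direction $\sup_n(f\wedge g_n)\leq f\wedge \sup_n g_n$ is immediate from the monotonicity of $\wedge$. For the reverse inequality, take $h'\ll f\wedge \sup_n g_n$; then $h'\leq f$ and $h'\ll \sup_n g_n$, and the definition of $\ll$ applied to the increasing sequence $(g_n)$ yields $N\in\N$ with $h'\leq g_N$. Thus $h'$ is a common lower bound of $f$ and $g_N$, whence $h'\leq f\wedge g_N\leq \sup_n(f\wedge g_n)$. Passing to the supremum over $h'\ll f\wedge \sup_n g_n$, and invoking axiom O2 in $S_\R$, completes the argument.

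For part (iii), the inequality $(f\wedge g)+h\leq(f+h)\wedge(g+h)$ is clear from $f\wedge g\leq f,g$. For the reverse, take $k\ll (f+h)\wedge(g+h)$, so that $k\ll f+h$ and $k\ll g+h$. By the refinement theorem (Theorem~\ref{refinement}) applied to $k\leq f+h$, there exist $a_1\leq f$ and $b_1\leq h$ with $k\leq a_1+b_1$; similarly from $k\leq g+h$ there exist $a_2\leq g$ and $b_2\leq h$ with $k\leq a_2+b_2$. A further refinement step, matching the two decompositions of $k$, should produce a common piece $a\leq f,g$ (hence $a\leq f\wedge g$) and a remainder bounded by $h$, yielding $k\leq (f\wedge g)+h$. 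Taking the supremum over $k\ll (f+h)\wedge(g+h)$ then finishes part~(iii). The technically delicate step here is the joint-refinement argument that extracts a single piece simultaneously dominated by $f$ and by $g$.
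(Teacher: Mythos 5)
Part (ii) of your argument is correct and is essentially the paper's proof. The problems are in (i) and (iii), and in both cases they sit exactly at the points you yourself flag as open.

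In (i), the set $D=\{h\mid h\ll h'\leq f,g \text{ for some } h'\}$ is only known to be upward directed when $f\propto g$ --- that is the hypothesis of the preceding lemma, and it is a hypothesis on the two \emph{upper} bounds, not on the elements of $D$. Your proposed fix via $u=p'+q'$ gives $u\propto f$ and $u\propto g$, which is the wrong direction and does not feed into the lemma; ``applying the lemma within the ideal generated by $u$'' would require already knowing that meets with (something like) $\infty\cdot u$ exist, which is circular. The paper circumvents directedness of $D$ for general $f,g$ by a staged construction: first $(\infty f)\wedge(\infty g)$ exists because $\{h\mid h\leq \infty f,\infty g\}$ is closed under addition (hence directed for free), is an order ideal, and is closed under suprema; then $f\wedge g$ exists when $f\propto g$ by the lemma; then when $f\leq \infty\cdot g$ by writing $f=\sup_n f_n$ with $(f_n)$ rapidly increasing, so that each $f_n\propto g$, and setting $f\wedge g=\sup_n(f_n\wedge g)$; and finally for arbitrary $f,g$ as $\bigl(f\wedge(\infty f\wedge\infty g)\bigr)\wedge g$. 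Without some such reduction your construction of $f\wedge g$ does not get off the ground.

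In (iii), the ``joint refinement'' you invoke does not exist as stated: Theorem \ref{refinement} refines a \emph{single} inequality $\sum f_i\leq\sum g_j$, and two independent decompositions $k\leq a_1+b_1$ and $k\leq a_2+b_2$ give no mechanism for extracting one piece of $k$ lying below both $f$ and $g$. The paper's actual route is different: first prove the auxiliary inequality $(f+g)\wedge h\leq f\wedge h+g\wedge h$ using O6; then, assuming $h\propto f,g$, take $l\lhd(f+h)\wedge(g+h)$, use the auxiliary inequality to get $l\leq f+(h\wedge l)$ and $l\leq g+(h\wedge l)$, choose $h'\ll h\wedge l$ that is algebraically complemented in $l$ (Lemma \ref{algordercontlemma}), write $l=d+h'$, and \emph{cancel} $h'$ (legitimate because of $\propto$ and unperforation) to obtain $d\leq(1+\epsilon)f$ and $d\leq(1+\epsilon)g$ for the \emph{same} $d$ --- this cancellation of a complemented summand is the device that produces the common piece, and it is absent from your sketch; the hypothesis $h\propto f,g$ is then removed by an $\epsilon$-perturbation of $h$. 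I would also note that your (iii) presupposes (i), so the gap there propagates.
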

\begin{proof}
(i) The existence of a countable dense subset in $S$ implies that such a set exists also in $S_\R$.  The intersection of a dense subset 
with an order ideal is dense in the ideal. Thus, every order ideal $O$ of $S_\R$ (i.e., a subset such that $f\leq g\in O$ implies
$f\in O$) contains a countable dense subset. If $O$ is also upward directed, then we can find a cofinal increasing sequence in $O$. Finally, if in addition 
$O$ is closed under the suprema of increasing sequences, then $O$ has a maximum element.

Let $f,g\in S_\R$. Let us first establish the existence of $(\infty \cdot f)\wedge (\infty \cdot g)$.
Observe that the set of elements $h\in S_\R$ such that $h\leq \infty \cdot f,\infty \cdot g$ is upward directed, as it is
closed under addition. Since it is also an order ideal and closed under the suprema of increasing sequences, it contains a maximum element 
$(\infty \cdot f)\wedge (\infty \cdot g)$.
(Along the same lines, one can show that $\infty \cdot S$ and $\infty\cdot S_\R$ are complete lattices.)

Next, let us prove the existence of $f\wedge g$ under the assumption that $f\propto g$. By the previous lemma, the set of elements $h$ such that $h\ll h'\leq f,g$
is upward directed. Since it is also an order ideal, it contains a cofinal increasing sequence $(h_n)$. Let
$h=\sup_n h_n$. Since $h_n\leq f,g$ for all $n$, we have $h\leq f,g$. On the other hand, if $l\leq f,g$ then for every $l'\ll l$ we have 
$l'\leq h_i$ for some $i$, and so $l'\leq h$. Passing to the supremum over all such $l'$ we get that $l\leq h$. This shows that $h=f\wedge g$.

Suppose now that $f\leq \infty\cdot g$. Let $(f_n)$ be a rapidly increasing with supremum $f$.  Then $f_n\propto g$ for all $n$ and so $f_n\wedge g$ exists for all $n$.
The sequence $(f_n\wedge g)$ is increasing. Let $h=\sup_n f_n\wedge g$. We clearly have that $h\leq f,g$. On the other hand, if $l\leq f,g$ and $l'\ll l$
then $l'\leq f_n$ for some $n$, and so $l'\leq f_n\wedge g\leq h$. Passing to the supremum over all such $l'$ we get that $l\leq h$. Thus, $h=f\wedge g$. 

Finally, let $f$ and $g$ be arbitrary elements of $S_\R$. Consider the element
\[
(f\wedge (\infty f\wedge \infty g))\wedge g.
\]
This element is well defined, since the existence of each greatest lower bound has been justified previously. A simple analysis reveals 
that this element must be $f\wedge g$.

(ii) We clearly have $\sup_n (f\wedge g_n)\leq f\wedge \sup_n g_n$. Let $l\ll f\wedge \sup_n g_n$. Then $l\leq g_{n_0}$ for some $n_0\in \N$.
Thus, $g\leq f\wedge g_{n_0}\leq \sup_n f\wedge g_n$. Passing to the supremum over all $l$ such that $l\ll f\wedge \sup_n g_n$ we get 
$f\wedge \sup_n g_n\leq \sup_n (f\wedge g_n)$.

(iii) Let us first establish a preliminary inequality: 
\begin{align}\label{prelimineq}
(f+g)\wedge h\leq f\wedge h+g\wedge h.
\end{align}
 Let $l\ll (f+g)\wedge h$.
Applying O6 in 
\[
l\ll (f+g)\wedge h\leq f+g
\] we find $f'$ and $g'$ such that 
\begin{align*}
l \leq f' &+g',\\
f'\leq (f+g)\wedge h,f &\hbox{ and } g'\leq (f+g)\wedge h,g.
\end{align*}
We have $f'\leq f\wedge  h$ and $g'\leq g\wedge h$. Hence $l\leq f\wedge h+g\wedge h$. Passing to the supremum
over all $l$ such that $l\ll (f+g)\wedge h$ we get \eqref{prelimineq}.

The inequality 
\[
f\wedge g+h\leq (f+h)\wedge (g+h)\] 
follows trivially from first principles.

Let us prove that 
\begin{align}\label{theconverse}
(f+h)\wedge (g+h)\leq f\wedge g+h.
\end{align}
 We first consider the case that $h\propto f,g$.
Let $l\in S_\R$ be such that $l\lhd (f+h)\wedge (g+h)$. Let $\epsilon>0$. Let us find $l'\lhd l$ such that $l\leq l'+\epsilon f$
and $l\leq l'+\epsilon g$. Such an element $l'$ exists by Lemma \ref{dini} and the fact that $(f+h)\propto f$ and $(g+h)\propto g$.
By \eqref{prelimineq} we have that $l\leq f+(h\wedge l)$ and $l\leq g+(h\wedge l)$. Let $h'\ll (h\wedge l)$ be such that 
\[
l'\leq f+h',g+h'.
\]
By Lemma \ref{algordercontlemma}, we can choose $h'$ such that it is algebraically complemented in $l$. Let
$d$ be such that $l=d+h'$. Then
\[
d+h'=l\leq l'+\epsilon f\leq f+h'+\epsilon f=(1+\epsilon)f+h'.
\]
Cancelling $h'$ we get that $d\leq (1+\epsilon)f$. Similarly, we get that $d\leq (1+\epsilon)g$.
So $d\leq (1+\epsilon)(f\wedge g)$ (here we have used that $\alpha f\wedge \alpha g=\alpha(f\wedge g)$ for $\alpha>0$, which follows from the fact that
scalar multiplication by $\alpha$ is an ordered semigroup isomorphism of $S_\R$). So
\[
l\leq d+h\leq (1+\epsilon)(f\wedge g)+h. 
\]
Since $\epsilon$ is arbitrary, we get that
$l\leq f\wedge g+h$. Passing to the supremum over all $l$ such that $l\lhd (f+h)\wedge (g+h)$ we get \eqref{theconverse}.

Let us now drop the assumption that $h\propto f,g$. Let $\epsilon>0$. We have
\begin{align}\label{withepsilon1}
(f+h)\wedge (g+h)=(f+\epsilon h)\wedge (g+\epsilon h)+(1-\epsilon)h.
\end{align}
On the other hand, applying \eqref{prelimineq} twice we have
\begin{align}\label{withepsilon2}
(f+\epsilon h)\wedge (g+\epsilon h)\leq f\wedge g+2\epsilon h.
\end{align}
Thus, combining \eqref{withepsilon1} and \eqref{withepsilon2} we get
\[
(f+h)\wedge (g+h)\leq f\wedge g +(1+\epsilon)h.
\]
Since $\epsilon>0$ is arbitrary, we are done.
\end{proof}

\section{Further remarks}

\subsection{The Cuntz semigroup of C*-algebras}
Given a  C*-algebra $A$, we denote by $\Cu(A)$ the Cuntz semigroup of $A$.

\begin{proposition}\label{Cuaxioms}
$\Cu(A)$ satisfies the axioms \emph{O1-O6}. If $A$ is separable then
$\Cu(A)$ contains a countable dense subset. 
\end{proposition}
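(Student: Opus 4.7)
The plan is to separate the statement into three parts. Axioms O1--O4 are established in \cite[Theorem~1]{cei} and O5 in \cite[Lemma~7.2]{rordam-winter}, so I would simply cite these. The work that remains is axiom O6 and the countable dense subset in the separable case.

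For O6, let $s,r,t\in\Cu(A)$ satisfy $s\leq r+t$ and let $s'\ll s$. Represent $s,r,t$ by positive elements $a,b,c\in A\otimes\mathcal{K}$, and realize $r+t$ as the class of the orthogonal sum $b\oplus c$ (viewed inside $A\otimes\mathcal{K}$ using its stability). Since $s'\ll s$, there exists $\epsilon>0$ with $s'\leq[(a-\epsilon)_+]$. Since $a\preccurlyeq b\oplus c$, the standard characterization of Cuntz comparison produces an element $x\in A\otimes\mathcal{K}$, effectively a $1\times 2$ row $(x_1,x_2)$, satisfying $x(b\oplus c)x^*=(a-\epsilon/2)_+$, i.e.\ $x_1 b x_1^*+x_2 c x_2^*=(a-\epsilon/2)_+$. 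Then I would set $r':=[x_1 b x_1^*]$ and $t':=[x_2 c x_2^*]$. The bounds $r'\leq r$ and $t'\leq t$ are automatic from $x_i z x_i^*\preccurlyeq z$. The bounds $r',t'\leq s$ follow because each of $x_1 b x_1^*$ and $x_2 c x_2^*$ is a positive summand of $(a-\epsilon/2)_+\leq a$. Finally, using $[u+v]\leq [u]+[v]$ for positive $u,v$, we obtain $s'\leq[(a-\epsilon)_+]\leq[(a-\epsilon/2)_+]\leq r'+t'$.

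For the countable dense subset when $A$ is separable, take a countable norm-dense subset $(a_n)$ of $(A\otimes\mathcal{K})_+$ and let $D:=\{[(a_n-1/m)_+]\mid n,m\in\N\}$. Given $s=[a]\in\Cu(A)$, the sequence $([(a-1/k)_+])_k$ is rapidly increasing with supremum $s$. Using the standard perturbation lemma (if $\|a-a_n\|<\eta$ then $(a-(\epsilon+\eta))_+\preccurlyeq(a_n-\epsilon)_+$, and symmetrically), I would select indices $n_k$ and parameters $1/m_k$ so that the elements $[(a_{n_k}-1/m_k)_+]$ interleave the original sequence, yielding a rapidly increasing sequence in $D$ with supremum $s$. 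A diagonal argument handles the simultaneous approximation of all required terms.

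The main obstacle is the splitting step in O6: one must carefully track the orthogonal direct-sum structure of $b\oplus c$ inside $A\otimes\mathcal{K}$ in order to decompose $x(b\oplus c)x^*$ into the pieces $x_1 b x_1^*$ and $x_2 c x_2^*$ and then verify that each piece is dominated by $a$ as a positive operator (not just Cuntz-below $a$). The remainder is routine Cuntz-comparison bookkeeping with $(\cdot-\epsilon)_+$ approximations.
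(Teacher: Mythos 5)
Your proof is correct and follows essentially the same route as the paper: O1--O5 are quoted from the same references, O6 is obtained from R\o rdam's lemma by splitting the implementing element against the orthogonal sum of $b$ and $c$ and then using $[u+v]\leq [u]+[v]$, and the countable dense subset is the same family $\{[(a_n-1/m)_+]\}$. The only (harmless) difference is in the bookkeeping for the splitting: you decompose the row $x=(x_1,x_2)$ acting on $b\oplus c$ to get the exact identity $x_1bx_1^*+x_2cx_2^*=(a-\epsilon/2)_+$, whereas the paper first arranges $bc=0$ and splits $xx^*$ using the functional calculus elements $g_\delta(b)$ and $g_\delta(c)$; both variants verify $r'\leq r,s$ and $t'\leq t,s$ in the same way.
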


\begin{proof}
\cite[Theorem 1]{cei} states that $\Cu(A)$ is an ordered semigroup satisfying axioms O1-O4 (i.e., is an object in the category $\CCu$). 

R\o rdam and Winter show in \cite[Lemma 7.2]{rordam-winter} that $\Cu(A)$ satisfies O5 (i.e., has almost agebraic order). 

Let us show that $\Cu(A)$ satisfies O6. Suppose that $[a]\leq [b]+[c]$, with $a,b,c\in (A\otimes \mathcal K)_+$. 
Without loss of generality, let us assume that $bc=0$. We must show that for every $s\ll [a]$ there exist $[b']$
and $[c']$ such that $s\leq [b']+[c']$, and $[b']\leq [a],[b]$, $[c']\leq [a],[c]$. It suffices to show this
for $s=[(a-\epsilon)_+]$ for some $\epsilon>0$. In this case, by \cite[Proposition 4.3]{rordamUHF} there exist $x\in A\otimes \mathcal K$ and $\delta>0$  
such that $(a-\epsilon)_+=x^*x$ and 
$xx^*$ belongs to the  hereditary subalgebra generated by $(b+c-\delta)_+$. Let $g_\delta\in C_0(\R)$
be non-negative and equal to $1$ on the set $(\delta,\|a\|]$. Then $g_\delta(b+c)(xx^*)=xx^*$. So,
\begin{align*}
[(a-\epsilon)_+]=[xx^*] &=[g_\delta(b+c)xx^*g_\delta(b+c)]\\
                        &=[g_\delta(b)xx^*g_\delta(b)+g_\delta(c)xx^*g_\delta(c)]\\
                        &\leq [g_\delta(b)xx^*g_\delta(b)]+[g_\delta(c)xx^*g_\delta(c)].
\end{align*}
Notice that $[g_\delta(b)xx^*g_\delta(b)]\leq [(a-\epsilon)_+],[b]$ and $[g_\delta(c)xx^*g_\delta(c)]\leq [(a-\epsilon)_+],[c]$.
Thus, setting $g_\delta(b)xx^*g_\delta(b)=b'$ and $g_\delta(c)xx^*g_\delta(c)=c'$, the desired result follows.

Finally, if $A$ is separable then the elements  $[(a-\frac{1}{n})]$, with $n\in \N$ and $a$
ranging through a countable dense subset of $(A\otimes \mathcal K)_+$, form a dense subset of $\Cu(A)$.
\end{proof}

Next we will show that $\Cu(A\otimes \mathcal R)\cong \Cu(A)_\R$, where $\mathcal R$
denotes the stably projectionless C*-algebra studied in \cite{jacelon} (therein denoted by $\mathcal W$)
and in \cite{remarks}.  
Notice that since $\mathcal R$ is nuclear, the tensor product $A\otimes \mathcal R$ is unambiguously defined.

We will need the following properties of $\mathcal R$ (see \cite{jacelon} and \cite{remarks}):
\begin{enumerate}[(i)]
\item
$\mathrm{K}_0(\mathcal R)=\mathrm{K}_1(\mathcal R)=0$.

\item
$\mathcal R\otimes \mathcal Q\cong \mathcal R$ where $\mathcal Q$ denotes the UHF algebra
with $K_0(\mathcal Q)\cong \Q$.

\item
There is an embedding $\mathcal R\hookrightarrow \mathcal Q$ such that, at the level of $\Cu$, the class of a strictly positive element $[e]\in \Cu(\mathcal R)$ is mapped to the element  $[e]\in 
\Cu(\mathcal Q)$ such that $[e]<[1]$ and  $\widehat{[e]}=\widehat{[1]}$.

\item
$\mathcal R\otimes \mathcal R\cong \mathcal R$.

\item
The automorphism $\mathcal R\otimes \mathcal R\to \mathcal R\otimes \mathcal R$
such that $a\otimes b\mapsto b\otimes a$ is approximately inner.  
\end{enumerate}

Let us recall the definition of a purely non-compact element of $\Cu(A)$. The element $[a]\in \Cu(A)$ is purely non-compact if its image on  every quotient $\Cu(A/I)$ is either non-compact or strongly infinite (i.e., $2[\pi_I(a)]=[\pi_I(a)]$, with $\pi_I\colon A\to A/I$ the quotient map). Let us denote the set of these elements by $\Cu(A)_{\mathrm{pnc}}$. By \cite[Proposition 6.4 (i)]{ers}, $\Cu(A)_{\mathrm{pnc}}$ is a subsemigroup of $\Cu(A)$ closed under sequential suprema.
By \cite[Theorem 6.6]{ers}, if $A$ absorbs the Jiang-Su algebra $\mathcal Z$ then $[a]\mapsto \widehat{[a]}$ is an isomorphism from $\Cu_{\mathrm{pnc}}(A)$ to $\LL(\F(\Cu(A)))$, which we have shown in Theorem \ref{thebidual} coincides with $\Cu(A)_\R$.

\begin{theorem}\label{CuR}
Let $A$ be a C*-algebra.  Then  $\Cu(A\otimes \mathcal R)$ is isomorphic to $\Cu(A)_\R$.
\end{theorem}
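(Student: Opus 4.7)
The plan is to apply \cite[Theorem 6.6]{ers} to $B := A \otimes \mathcal R$. That result provides an isomorphism $\Cu_{\mathrm{pnc}}(B) \cong \LL(\F(\Cu(B)))$ whenever $B$ is $\mathcal Z$-stable, and Theorem \ref{thebidual} identifies the right-hand side with $\Cu(B)_\R$. What is then left is three items: (a) $\mathcal Z$-stability of $B$; (b) the equality $\Cu(B) = \Cu_{\mathrm{pnc}}(B)$; and (c) an identification $\F(\Cu(B)) \cong \F(\Cu(A))$ of topological cones, which forces $\LL(\F(\Cu(B))) = \LL(\F(\Cu(A))) = \Cu(A)_\R$.

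Step (a) is essentially immediate: by property (ii), $\mathcal R \cong \mathcal R \otimes \mathcal Q$, and since $\mathcal Q$ is $\mathcal Z$-stable, so is $\mathcal R$; tensoring with $A$ then preserves this. For step (b), the idea is to use property (iii): the embedding $\mathcal R \hookrightarrow \mathcal Q$ sends the class of a strictly positive element of $\mathcal R$ to a non-compact element of $\Cu(\mathcal Q)$, and this forces every non-zero element of $\Cu(\mathcal R)$ to be non-compact. Tensoring with $A$ and using that $\mathcal R$ is simple and nuclear (so ideals of $B$ have the form $J \otimes \mathcal R$ for $J \triangleleft A$ and quotients have the form $(A/J) \otimes \mathcal R$), one verifies that every element of $\Cu(B)$ is non-compact in every quotient. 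Since property (iii) also embeds $B$ into the stably finite algebra $A \otimes \mathcal Q$, strongly infinite elements do not arise, so non-compactness in every quotient coincides here with pure non-compactness.

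For step (c), recall from \cite[Theorem 4.4]{ers} that $\F(\Cu(B))$ is the cone of lower semicontinuous 2-quasitraces on $B$. I would first show that $\mathcal R$ admits an essentially unique lower semicontinuous 2-quasitrace $\tau_{\mathcal R}$ up to positive scalar, using property (iii) (which realises $\Cu(\mathcal R)$ as the non-compact part of $\Cu(\mathcal Q)$, equipped with a single functional up to scalar). Every lower semicontinuous 2-quasitrace on $A \otimes \mathcal R$ should then decompose uniquely as $\tau \otimes \tau_{\mathcal R}$ for a lower semicontinuous 2-quasitrace $\tau$ on $A$. The resulting bijection between the two cones is continuous for the topology \eqref{topology}, and since both cones are compact Hausdorff it is a homeomorphism.

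The main obstacle is step (c): forming tensor products of 2-quasitraces, as opposed to genuine traces, is subtle, and uniqueness of $\tau_{\mathcal R}$ must be established at the level of lower semicontinuous 2-quasitraces rather than traces. A workable route is to first settle the analogous statement with $\mathcal R$ replaced by the UHF algebra $\mathcal Q$, where one can work with ordinary traces and exploit exactness; then to leverage $\mathcal R \otimes \mathcal R \cong \mathcal R$ (property (iv)) together with approximate innerness of the flip on $\mathcal R \otimes \mathcal R$ (property (v)) to transfer the conclusion from $\mathcal Q$ to $\mathcal R$.
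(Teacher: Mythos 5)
Your overall skeleton --- reduce to \cite[Theorem 6.6]{ers} applied to $B=A\otimes\mathcal R$, then identify $\LL(\F(\Cu(B)))$ with $\Cu(A)_\R$ via Theorem \ref{thebidual} --- is sound, and your steps (a) and (b) essentially reproduce what the paper does in Lemma \ref{absorbsR} (though the paper handles (b) differently: it reduces pure non-compactness to showing that every projection in an $\mathcal R$-absorbing algebra is properly infinite, using $\mathrm K_0(\mathcal R)=0$ and almost unperforation, rather than trying to propagate non-compactness of $[e]\in\Cu(\mathcal Q)$ through the tensor product; your remark that strongly infinite elements ``do not arise'' because $A\otimes\mathcal Q$ is stably finite is false for general $A$, e.g.\ $A=\mathcal O_2$, but it is also unnecessary, since non-compact \emph{or} strongly infinite in every quotient is all that pure non-compactness asks for).

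The genuine gap is your step (c). Proving that every lower semicontinuous $2$-quasitrace on $A\otimes\mathcal R$ decomposes as $\tau\otimes\tau_{\mathcal R}$ is not a routine matter and does not follow from properties (i)--(v): $A$ is an arbitrary C*-algebra, so you cannot invoke exactness to replace $2$-quasitraces by traces (not even on $A\otimes\mathcal Q$), and tensor decompositions of $2$-quasitraces are precisely the kind of statement one does not know how to prove in this generality. The paper sidesteps this entirely. It proves the cone identification only for $\mathcal Q$ (Lemma \ref{CuQ}), where $A\otimes\mathcal Q$ is a sequential inductive limit of matrix amplifications $A\otimes M_n$ and the identification $\F(\Cu(A\otimes M_n))\cong\F(\Cu(A))$ is elementary; this reduces the theorem to the case $A\cong A\otimes\mathcal Q$. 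It then compares $\Cu(A\otimes\mathcal R)$ with $\Cu_{\mathrm{pnc}}(A\otimes\mathcal Q)$ \emph{directly at the level of Cuntz semigroups}, using the embedding $\mathcal R\hookrightarrow\mathcal Q$ of property (iii) together with the key structural Lemma \ref{mainlemma} (every positive element of $A\otimes\mathcal R$ is Cuntz equivalent to one of the form $a\otimes e$, proved via the approximately inner flip of property (v)); the only functionals ever needed are those on $\Cu(A\otimes\mathcal Q)$ and their restrictions along $[a]\mapsto[a\otimes e]$, never a classification of all functionals on $\Cu(A\otimes\mathcal R)$. To repair your argument you would need either to supply the quasitrace decomposition (hard, and not available from the stated inputs) or to adopt something like the paper's detour through $\mathcal Q$ and Lemma \ref{mainlemma}.
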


The proof is divided in a number lemmas.

\begin{lemma} \label{absorbsR}
If $A\otimes \mathcal R\cong A$ then the map $[a]\mapsto \widehat{[a]}$ is an isomorphism from $\Cu(A)$ to $\Cu(A)_\R$.
\end{lemma}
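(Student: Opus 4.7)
The plan is to reduce the statement to \cite[Theorem 6.6]{ers}, which provides, for a $\mathcal{Z}$-stable C*-algebra $A$, an isomorphism of ordered semigroups from $\Cu_{\mathrm{pnc}}(A)$ onto $\LL(\F(\Cu(A)))$ via $[a]\mapsto\widehat{[a]}$; by Theorem \ref{thebidual} this target coincides with $\Cu(A)_\R$. The task therefore reduces to verifying (i) that $A$ is $\mathcal{Z}$-stable and (ii) that every element of $\Cu(A)$ is purely non-compact, so that $\Cu_{\mathrm{pnc}}(A)=\Cu(A)$.

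Point (i) is immediate from property (ii) of $\mathcal{R}$: since $\mathcal{R}\cong\mathcal{R}\otimes\mathcal{Q}$ and the UHF algebra $\mathcal{Q}$ is $\mathcal{Z}$-stable, the hypothesis $A\cong A\otimes\mathcal{R}$ upgrades to $A\cong A\otimes\mathcal{R}\otimes\mathcal{Q}\otimes\mathcal{Z}$.

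For (ii), nuclearity of $\mathcal{R}$ makes $-\otimes\mathcal{R}$ exact, so every quotient $A/I$ still satisfies $(A/I)\otimes\mathcal{R}\cong A/I$; in particular the hypothesis passes to quotients. It therefore suffices to show that any $B$ with $B\cong B\otimes\mathcal{R}$ is stably projectionless; granting this, no nonzero element of $\Cu(B)$ can be compact (as compact Cuntz classes are represented by projections), hence every element of $\Cu(A)$ is automatically purely non-compact, the zero class being trivially strongly infinite.

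The hard part will be the stable projectionlessness claim. I plan to derive it from property (i) of $\mathcal{R}$: the vanishing $\mathrm{K}_0(\mathcal{R})=\mathrm{K}_1(\mathcal{R})=0$ combined with a K\"unneth-type argument yields $\mathrm{K}_0(A\otimes\mathcal{R})=0$, while the densely defined faithful trace on $\mathcal{R}$ together with the $\mathcal{Z}$-stability secured in (i) shows that $A\otimes\mathcal{R}\cong A$ is stably finite; any projection in the stabilization of a stably finite algebra with vanishing $\mathrm{K}_0$ must then vanish. An alternative route would use property (iii): the embedding $\mathcal{R}\hookrightarrow\mathcal{Q}$ sends a strictly positive element to a class strictly dominated by $[1]$ with matching trace, and tensoring with a hypothetical nonzero projection in $A\otimes\mathcal{K}\cong A\otimes\mathcal{R}\otimes\mathcal{K}$ would produce a contradiction in $\Cu(A\otimes\mathcal{Q})$. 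With (i) and (ii) in hand, \cite[Theorem 6.6]{ers} combined with Theorem \ref{thebidual} directly yields the stated isomorphism $\Cu(A)\cong\Cu(A)_\R$.
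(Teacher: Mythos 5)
Your reduction to \cite[Theorem 6.6]{ers} via Theorem \ref{thebidual} is exactly the paper's strategy, and points (i) (the upgrade $\mathcal R\cong\mathcal R\otimes\mathcal Q$ gives $\mathcal Z$-stability) and the passage to quotients in (ii) are fine. The gap is in the "hard part": the claim that every $B$ with $B\cong B\otimes\mathcal R$ is stably projectionless is false. For instance $\mathcal O_2\otimes\mathcal R$ is $\mathcal R$-absorbing (since $\mathcal R\otimes\mathcal R\cong\mathcal R$), but it is a purely infinite simple C*-algebra, hence of real rank zero and full of projections. Both of your proposed routes to stable projectionlessness break at the same point: the stable finiteness argument needs a trace on $A\otimes\mathcal R$, but the trace on $\mathcal R$ does not produce one unless $A$ itself admits a densely defined trace (it does not for $A=\mathcal O_2$), and the $\Cu(\mathcal Q)$ comparison in the alternative route likewise only yields a contradiction in the presence of a faithful functional.

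What saves the lemma --- and what the paper actually does --- is that pure non-compactness does not require the absence of compact elements: a compact class $[p]$ is still purely non-compact provided it is \emph{strongly infinite} ($2[p]=[p]$) in every quotient where it is nonzero. So the correct target is: every projection $p$ in an $\mathcal R$-absorbing C*-algebra is properly infinite. The paper gets this from $\mathrm K_0(pAp)=\mathrm K_0(\mathrm{Ideal}(p))=\mathrm K_0(\mathrm{Ideal}(p)\otimes\mathcal R)=0$ (using that ideals inherit $\mathcal R$-absorption and $\mathrm K_*(\mathcal R)=0$), which gives $m[p]=n[p]$ in $\Cu$ for some $m<n$, and then almost unperforation of $\Cu(pAp)$ (from $\mathcal Z$-stability) forces $2[p]=[p]$. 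Your property (i) of $\mathcal R$ is the right ingredient, but it must be deployed to kill $\mathrm K_0$ of the ideal generated by a projection, not to rule out projections altogether.
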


\begin{proof}
Since $\mathcal R\otimes \mathcal Z\cong \mathcal R$, the algebra $A$ absorbs $\mathcal Z$. Thus, by \cite[Theorem 6.6]{ers},  it suffices to show that every element of $\Cu(A)$
is purely non-compact. Since every quotient of $A\otimes \mathcal R$ ($\cong A$) has the form $(A/I)\otimes \mathcal R$, it suffices to show that 
every projection $p$ of an $\mathcal R$-absorbing C*-algebra is properly infinite. 
Let $p$ be such a projection. Then $pAp$ is unital and absorbs $\mathcal Z$ (since $\mathcal Z$-stability passes to hereditary subalgebras). Since  $\mathrm K_0(pAp)=\mathrm K_0(\mathrm{Ideal}(p))= \mathrm K_0(\mathrm{Ideal}(p)\otimes \mathcal R)=\{0\}$, we have $m[p]=n[p]$ for some $m<n$. But $\Cu(pAp)$ is almost unperforated. So $2[p]=p$, i.e., $p$ is properly infinite.
\end{proof}

\begin{lemma}\label{CuQ}
The homomorphism $A\to A\otimes \mathcal Q$ given by 
$a\mapsto a\otimes 1$
induces an isomorphism from $\Cu(A)_\R$ to $\Cu(A\otimes \mathcal Q)_\R$.
\end{lemma}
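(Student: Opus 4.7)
The plan is to dualise the question via Theorem \ref{thebidual}. The inclusion $\iota\colon A\to A\otimes \mathcal{Q}$, $a\mapsto a\otimes 1$, yields a $\CCu$-morphism $\iota_*\colon \Cu(A)\to \Cu(A\otimes \mathcal{Q})$, and since $\Cu(A\otimes \mathcal{Q})_\R$ has real multiplication, Proposition \ref{realification} gives a unique extension $\phi\colon \Cu(A)_\R\to \Cu(A\otimes \mathcal{Q})_\R$; this is the map we must show is an isomorphism. The key observation is that by Theorem \ref{thebidual} the realification is recovered from the cone of functionals as $S_\R=\LL(\F(S))$, so it suffices to prove that the pull-back $\iota^*\colon \F(\Cu(A\otimes \mathcal{Q}))\to \F(\Cu(A))$, $\lambda\mapsto \lambda\circ\iota_*$, is an isomorphism of topological cones. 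Then $\LL(\iota^*)$ furnishes a two-sided inverse of $\phi$, and by the uniqueness clause of Proposition \ref{realification} this inverse construction agrees with the canonical extension, so $\phi$ itself is an isomorphism.

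Continuity of $\iota^*$ is immediate from the characterisation \eqref{topology} of convergence in $\F$, since $\iota_*$ preserves $\ll$; as both cones are compact Hausdorff by \cite[Theorem 4.8]{ers}, it suffices to verify bijectivity. Injectivity is forced by the divisibility of $\mathcal{Q}$: for each $n$ there is a projection $p_n\in \mathcal{Q}$ with $n[p_n]=[1_\mathcal{Q}]$, so $n[a\otimes p_n]=\iota_*([a])$ in $\Cu(A\otimes \mathcal{Q})$, and hence any $\lambda\in \F(\Cu(A\otimes \mathcal{Q}))$ must satisfy $\lambda([a\otimes p_n])=(\iota^*\lambda)([a])/n$. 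Representing $\mathcal{Q}$ as the sequential inductive limit of $(M_{n!})_n$ under the standard unital embeddings and invoking continuity of $\Cu$, every element of $\Cu(A\otimes \mathcal{Q})$ is a sequential supremum of sums of such classes $[a\otimes p_n]$, so $\lambda$ is pinned down by $\iota^*\lambda$.

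For surjectivity, given $\mu\in \F(\Cu(A))$ I construct a preimage level by level: on $\Cu(A\otimes M_{n!})\cong \Cu(A)$ (Morita equivalence) I set $\hat\mu_n([a\otimes e_{11}])=\mu([a])/n!$. The connecting map $\Cu(A\otimes M_{n!})\to \Cu(A\otimes M_{(n+1)!})$ induced by $x\mapsto x\otimes 1_{n+1}$ sends $[a\otimes e_{11}]$ to $(n+1)[a\otimes e_{11}\otimes e_{11}]$, and the prefactors $1/n!$ and $1/(n+1)!$ conspire so that $\hat\mu_{n+1}((n+1)[a\otimes e_{11}\otimes e_{11}])=\mu([a])/n!=\hat\mu_n([a\otimes e_{11}])$; thus the $\hat\mu_n$ are compatible and descend to a functional $\hat\mu$ on $\Cu(A\otimes \mathcal{Q})$ satisfying $\iota^*\hat\mu=\mu$. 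The main obstacle lies precisely in this surjectivity step: verifying compatibility with the connecting maps under the Morita identifications, and then checking that the glued limit really preserves sequential suprema (not just addition and order) is the most delicate piece. Once the bijection is established, compactness delivers the homeomorphism, and applying $\LL$ together with Proposition \ref{realification} yields the desired isomorphism.
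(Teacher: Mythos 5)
Your proposal is correct and follows essentially the same route as the paper: reduce to showing that $a\mapsto a\otimes 1$ induces an isomorphism $\F(\Cu(A\otimes\mathcal Q))\to\F(\Cu(A))$ by exploiting the inductive limit $\mathcal Q=\varinjlim M_{n!}$ and the isomorphisms at each matrix-algebra stage, then conclude via Theorem \ref{thebidual}. The only difference is that the paper dispatches the limit step (your ``most delicate piece,'' the gluing and supremum-preservation check) by citing the sequential continuity of $\F(\cdot)$ from \cite[Theorem 4.8]{ers}, whereas you carry it out by hand.
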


\begin{proof}
The homomorphisms $a\mapsto a\otimes 1_n$, from $A$ to $A\otimes M_n$
induce isomorphisms at the level of $\F(\cdot)$ for all $n$. Passing to the limit with respect to $n$, and using that $\F(\cdot)$ is sequentially continuous (see \cite[Theorem 4.8]{ers}), we get that the map  
$\F(A\otimes \mathcal Q)\to \F(A)$ induced by $a\mapsto a\otimes 1$ is an isomorphism. The result now follows from Theorem \ref{thebidual}. (We can alternatively use the continuity of the functor $\Cu(\cdot)_\R$ with respect to sequential inductive limits.)
\end{proof}

The following lemma is of independent interest (and in particular, does not immediately follow from Theorem  \ref{CuR}).
\begin{lemma}\label{mainlemma}
If $A\otimes \mathcal Q\cong A$ then every element in $A\otimes \mathcal R$ is Cuntz equivalent to 
an element of the form $a\otimes e$, with $e\in \mathcal R_+$ strictly positive.
\end{lemma}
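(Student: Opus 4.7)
The plan is to reduce Cuntz equivalence in $A\otimes\mathcal R$ to matching of dimension functions (by exploiting $\mathcal R$-absorption), and then to produce the required element $a$ via the embedding $\mathcal R\hookrightarrow \mathcal Q$ from property~(iii).

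First I would observe that $A\otimes\mathcal R$ is $\mathcal R$-absorbing: by property~(iv), $(A\otimes\mathcal R)\otimes\mathcal R\cong A\otimes(\mathcal R\otimes\mathcal R)\cong A\otimes\mathcal R$. Applying Lemma \ref{absorbsR} to $B:=A\otimes\mathcal R$, the map $[y]\mapsto \widehat{[y]}$ identifies $\Cu(A\otimes\mathcal R)$ with $\Cu(A\otimes\mathcal R)_\R$; in particular, Cuntz equivalence in $A\otimes\mathcal R$ is detected by the values of lsc $2$-quasitraces. It thus suffices to produce $a\in A_+$ with $d_\tau(x)=d_\tau(a\otimes e)$ for every lsc $2$-quasitrace $\tau$ on $A\otimes\mathcal R$.

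For the candidate, take the embedding $\phi\colon \mathcal R\hookrightarrow \mathcal Q$ from property~(iii) and form
\[
\iota\colon A\otimes\mathcal R\xrightarrow{\id_A\otimes\phi}A\otimes\mathcal Q\xrightarrow{\;\cong\;}A,
\]
where the last arrow is the given isomorphism. Set $a:=\iota(x)$. To verify the equality of dimension functions, normalize the unique lsc $2$-quasitrace $\tau_\mathcal R$ of the simple nuclear algebra $\mathcal R$ so that $d_{\tau_\mathcal R}(e)=1$. Every lsc $2$-quasitrace $\tau$ on $A\otimes\mathcal R$ then factors as $\tau=\tau_A\otimes\tau_\mathcal R$ for a unique lsc $2$-quasitrace $\tau_A$ on $A$, whence $d_\tau(a\otimes e)=d_{\tau_A}(a)$. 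Transferring $\tau_A$ across $A\cong A\otimes\mathcal Q$ and writing the corresponding trace as $\tau_A\otimes\tau_\mathcal Q$ (by uniqueness of the trace $\tau_\mathcal Q$ on $\mathcal Q$), one computes
\[
d_{\tau_A}(\iota(x)) \;=\; d_{\tau_A\otimes\tau_\mathcal Q}\bigl((\id_A\otimes\phi)(x)\bigr) \;=\; d_{\tau_A\otimes(\tau_\mathcal Q\circ\phi)}(x).
\]
Property~(iii) forces $d_{\tau_\mathcal Q}(\phi(e))=d_{\tau_\mathcal Q}(1_\mathcal Q)=1$, so by the uniqueness of $\tau_\mathcal R$ one has $\tau_\mathcal Q\circ\phi=\tau_\mathcal R$. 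Hence $d_\tau(a\otimes e)=d_{\tau_A\otimes\tau_\mathcal R}(x)=d_\tau(x)$, as required.

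The main obstacle is the factorization $\tau=\tau_A\otimes\tau_\mathcal R$ for lsc $2$-quasitraces on $A\otimes\mathcal R$. Nuclearity of $\mathcal R$ and Haagerup's theorem promote the $2$-quasitraces to traces, after which uniqueness of the trace on the simple algebra $\mathcal R$ yields the factorization by a standard argument (define $\tau_A(a):=\tau(a\otimes e)$, check it is a lsc $2$-quasitrace on $A$, and compare both sides on simple tensors using lower semicontinuity). Additional care is required around the normalization of $\tau_\mathcal R$ and $\tau_\mathcal Q$, the potential discrepancy between $\tau_A$ on $A$ and its image under the isomorphism $A\cong A\otimes\mathcal Q$ (which can be absorbed into the parametrization of $\F(\Cu(A))$), and the extension of the argument to the stabilization $(A\otimes\mathcal R)\otimes\mathcal K$, but these are essentially routine. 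The author's division of the proof into several lemmas is presumably organized around exactly these points.
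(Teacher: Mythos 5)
Your opening move --- $A\otimes\mathcal R$ absorbs $\mathcal R$, so by Lemma \ref{absorbsR} Cuntz classes in $A\otimes\mathcal R$ are detected by functionals --- is the same as the paper's, but the way you produce and verify the candidate $a$ has two genuine gaps. First, the factorization $\tau=\tau_A\otimes\tau_{\mathcal R}$ of an arbitrary lower semicontinuous $2$-quasitrace on $A\otimes\mathcal R$ is not available as you claim: Haagerup's theorem promotes $2$-quasitraces to traces only on \emph{exact} C*-algebras, and here $A$ is arbitrary (the hypothesis $A\otimes\mathcal Q\cong A$ does not force exactness), so you cannot assume $\tau$ is a trace; moreover ``$\tau_A\otimes\tau_{\mathcal R}$'' is not a priori a well-defined $2$-quasitrace, and even in the tracial case the factorization of unbounded $[0,\infty]$-valued lsc traces over a non-unital simple factor is not the routine unital tracial-state argument you gesture at. Second, and more fatally, the computation of $d_{\tau_A}(\iota(x))$ breaks where you transfer $\tau_A$ across the isomorphism $\psi\colon A\otimes\mathcal Q\to A$: uniqueness of $\tau_{\mathcal Q}$ only yields $\tau_A\circ\psi=\sigma\otimes\tau_{\mathcal Q}$ for \emph{some} $\sigma$, with no reason that $\sigma=\tau_A$, since $\psi$ is an uncontrolled isomorphism. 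What you actually obtain is $\widehat{[x]}=\widehat{[a\otimes e]}\circ T$ for some bijection $T$ of the cone of functionals, and this does \emph{not} give $\widehat{[x]}=\widehat{[a\otimes e]}$; the discrepancy cannot be ``absorbed into the parametrization,'' because Lemma \ref{absorbsR} requires agreement at every individual functional, not agreement up to an automorphism of the cone.

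The paper's proof avoids both issues by never leaving a fixed ambient algebra and never factoring quasitraces. It places the element in $\mathcal Q\otimes A\otimes\mathcal R$ and uses $\mathcal Q$-absorption to replace it, up to approximate unitary equivalence, by $1\otimes a_1$ with $a_1\in A\otimes\mathcal R$; property (iii) then gives $\widehat{[1\otimes a_1]}=\widehat{[e\otimes a_1]}$, hence $[1\otimes a_1]=[e\otimes a_1]$ by Lemma \ref{absorbsR}; finally the approximately inner flip on $\mathcal R\otimes(\cdot)\otimes\mathcal R$ (property (v)) moves $e$ into the designated tensor factor, again by approximate unitary equivalence. The flip is the ingredient your argument has no substitute for: it is what lets one realize $a\otimes e$ inside the same algebra as $x$ without routing through the non-canonical identification $A\otimes\mathcal Q\cong A$. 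If you want to salvage your approach, you would need to prove the quasitrace factorization in this generality and, more importantly, choose $a$ so that the comparison of functionals is pointwise rather than up to a reparametrization --- at which point you are essentially forced back to the paper's argument.
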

\begin{proof}
Let $b\in \mathcal Q\otimes A\otimes \mathcal R$ be a positive element, where $A$ is a C*-algebra that absorbs $\mathcal Q$. Since $A\otimes \mathcal R$ absorbs $\mathcal Q$, $b$ is approximately unitarily equivalent to
an element of the form $1\otimes a_1$, with $a_1\in A\otimes \mathcal R$. Let us identify $\mathcal R$ with a subalgebra of $\mathcal Q$ in such a way that $[e]\in \Cu(\mathcal Q)$
is the unique element such that $[e]<[1]$
and $\widehat{[e]}=\widehat{[1]}$. Then $\widehat{[1\otimes a_1]}=\widehat{[e\otimes a_1]}$ 
(more generally, $\widehat{[b_1\otimes c]}=\widehat{[b_2\otimes c]}$ whenever 
$\widehat{[b_1]}=\widehat{[b_2]}$). So, $[1\otimes a_1]=[e\otimes a_1]$ by Lemma
\ref{absorbsR}. Notice that $e\otimes a_1\in \mathcal R\otimes A\otimes \mathcal R$.
Since the automorphism of $\mathcal R\otimes A\otimes \mathcal R$ that maps $x\otimes y\otimes z$
to $z\otimes y\otimes x$ is approximately inner, the element $e\otimes a_1$ is approximately
unitarily equivalent to an element of the form $a\otimes e$, with $a\in \mathcal Q\otimes A$. This completes the proof. 
\end{proof}

\begin{proof}[Proof of Theorem \ref{CuR}]
By Lemma \ref{CuQ} we may assume that $A\otimes \mathcal Q\cong A$. Consider the map
from $A\otimes \mathcal R$ to $A\otimes \mathcal Q$ induced by the inclusion $\mathcal R\hookrightarrow \mathcal Q$. Since every element of $\Cu(A\otimes \mathcal R)$ is purely
non-compact, and such elements are preserved by morphisms in the category $\CCu$, $\Cu(A\otimes \mathcal R)$ is mapped into  $\Cu_{\mathrm{pnc}}(A\otimes \mathcal Q)$.
Let us show that it is an isomorphism into this set.  Let $s_1,s_2\in \Cu(A\otimes \mathcal R)$.
Assume that $s_i=[a_i\otimes e]$, with $i=1,2$,  by Lemma \ref{mainlemma}. If 
$[a_1\otimes e]=[a_2\otimes e]$ in $\Cu(A\otimes \mathcal Q)$, then 
\[
\widehat {[a_1\otimes 1]}=\widehat {[a_1\otimes e]}=\widehat {[a_2\otimes e]}=
\widehat {[a_2\otimes 1]}.
\]
By Lemma \ref{CuQ}, we get that $\widehat {[a_1]}=\widehat {[a_2]}$, and so
$\widehat {[a_1\otimes e]}=\widehat {[a_1\otimes e]}$ as elements of $\Cu(A\otimes \mathcal R)_{\R}$. Thus, by Lemma \ref{absorbsR}, $[a_1\otimes e]=[a_2\otimes e]$ in $\Cu(A\otimes \mathcal R)$. This proves injectivity.

Let us prove surjectivity. Let $s\in \Cu_{\mathrm{pnc}}(A\otimes \mathcal Q)$. We may assume that
$s=[a\otimes 1]$ for some $a\in A$. We have $\widehat{[a\otimes 1]}=\widehat{[a\otimes e]}$.
But $s$ is purely non-compact. So, $s=[a\otimes 1]=[a\otimes e]$. This proves surjectivity.
\end{proof}

\subsection{Glimm's halving property}
Let us show that the axioms O1-O6 suffice to recover Glimm's halving property in the context of simple ordered semigroups.

\begin{proposition}
Let $S$ be an ordered semigroup satisfying axioms \emph{O1-O6}. Suppose that $S$ is simple (in the sense that every non-zero element is full, 
i.e., $\infty \cdot s=\infty$ for $s\neq 0$) and that $S\neq \{0,1,\dots,\infty\}$. Then for every non-zero $x\in S$ there exists $z\neq 0$ 
such that $2z\leq x$.
\end{proposition}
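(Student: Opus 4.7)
The plan is to argue by contradiction. Assume $x \in S$ is non-zero and that no non-zero $z \in S$ satisfies $2z \leq x$; I aim to show that $S$ is of the rigid form $\{0, x, 2x, 3x, \dots, \infty\}$ isomorphic to $\{0, 1, \dots, \infty\}$, contradicting the hypothesis. The first easy observation is that non-halvability descends: any $z \neq 0$ with $2z \leq y \leq x$ would halve $x$ as well, so no non-zero $y \leq x$ can be halved.

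The crux will be to show that $x$ is indecomposable, in the sense that $y_1 + y_2 \leq x$ with both $y_1, y_2 \neq 0$ is impossible. Given such a decomposition, simplicity yields $y_1 \leq \infty \cdot y_2$, so after picking a non-zero $y_1' \ll y_1$ and writing $y_2$ as the supremum of a rapidly increasing sequence $(y_{2,m})$, one obtains $y_1' \leq n\, y_{2,m}$ for some $n, m \in \N$. I would then apply O6 iteratively to $y_1' \leq y_{2,m} + (n-1)\, y_{2,m}$: each application extracts a common sub-element $a$ bounded by both $y_1$ and $y_2$, and such an $a$ must vanish because $2a \leq y_1 + y_2 \leq x$ would halve $x$. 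Peeling off one copy of $y_{2,m}$ at each step, after $n$ iterations one arrives at $y_1' \leq 0$, contradicting $y_1' \neq 0$. This iterative peeling via O6, controlled by the non-halvability of $x$, is the main technical obstacle.

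With indecomposability established, I would use O5 to deduce $[0, x] = \{0, x\}$: given a hypothetical $0 \neq y < x$, pick non-zero $y' \ll y$ and apply O5 to $y' \ll y \leq x$ to obtain $r$ with $y' + r \leq x \leq y + r$; then $r = 0$ would force $y \geq x$, contradicting the strict inequality, while $r \neq 0$ would violate indecomposability. So $x$ must be minimal non-zero, and writing $x = \sup x_n$ with $x_n \ll x_{n+1}$, each $x_n$ would lie in $\{0, x\}$ and some $x_{n_0} = x$, giving $x \ll x$.

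Finally, for any non-zero $y \in S$, simplicity combined with $x \ll x$ would give $x \leq n y$ for some $n$; iterated O6 together with the minimality of $x$ (so that each extracted common sub-element lies in $\{0, x\}$) would then produce $x \leq y$ after at most $n$ steps. Applying O5 to $x \ll x \leq y$ then writes $y = x + r$, and recursing on $r$ (which, if non-zero, also satisfies $x \leq r$) yields $y = k x + r^{(k)}$ for every $k$; either the recursion terminates at some $r^{(k)} = 0$, giving $y = k x$, or $r^{(k)} \neq 0$ for all $k$, forcing $y \geq k x$ for all $k$ and hence $y = \infty$ by simplicity. This would identify $S$ with $\{0, x, 2x, \dots, \infty\} \cong \{0, 1, \dots, \infty\}$, completing the contradiction.
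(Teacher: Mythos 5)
Your proof is correct and takes essentially the same route as the paper's: the iterated O6 argument showing that any non-trivial decomposition $y_1+y_2\leq x$ produces a non-zero element dominated by both summands (hence a halving element), followed by the O5/O6 analysis showing that an indecomposable element is a minimal, compact, minimum non-zero element, which forces $S=\{0,x,2x,\dots,\infty\}\cong\{0,1,\dots,\infty\}$. The differences are purely organizational --- you argue by global contradiction and show that every extracted common sub-element vanishes, where the paper exhibits one non-zero such piece directly --- and your concluding identification of $S$ with $\{0,1,\dots,\infty\}$ is carried out at the same level of detail as in the paper.
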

\begin{proof}
Let $x\in S$ and suppose that $x_1+x_2\leq x$
for non-zero $x_1$ and $x_2$. Let us prove the existence of $z$ such that $2z\leq x$. Let $x_1'$ and $x_1''$
be non-zero elements and such that $x_1''\ll x_1'\ll x_1$. Then there is a finite $n$ such that $x_1'\leq nx_2$.
By O6, we have $x_1''\leq x_2^{(1)}+x_2^{(2)}+\dots x_2^{(n)}$, where $x_2^{(i)}\leq x_2,x_1$. At least one of the $x_2^{(i)}$s
must be non-zero. Assume it is $x_2^{(1)}$. Then $2x_2^{(1)}\leq x_1+x_2\leq x$.

Suppose that there exists an element $e\in S$  such that $x_1+x_2\leq e$ implies $x_1=0$ or $x_2=0$.
Let us prove that in this case $S\cong \{0,1,\dots,\infty\}$. First observe that $e$ is minimal among the non-zero elements.
For if $e'<e$, with $e'\neq 0$, then choosing $e''\ll e'$ non-zero we get by axiom O5 that there exists $c$ such that $e''+c\leq e\leq e'+c$. The element $c$ must be non-zero
(since $e'\neq e$). This contradicts the property of $e$. Since $e$ is a minimal non-zero element, we must have $e\ll e$. Let $f\in S$. Then there exists
$n$ such that $e\leq nf$. By O6 we have $e\leq f_1+f_2+\dots +f_n$, where $f_i\leq e,f$. At least one the $f_i$s is non-zero. For this element
we must have $e=f_i$, since $e$ is minimal. We conclude that $e\leq f$, i.e. $e$ is the minimum non-zero element. Let $f\in S$ be non-zero. Then $e\ll e\leq f$ and so
$e+f_1=f$ for some $f_1$ (by O5). If $f_1$ is non-zero then $e\leq f_1$ and so $e+f_2=f_1$ for some $f_2$. Continuing this process we get that either $f=ne$
for some $n$ or $f=\infty$. Thus, $S=\{0,e,2e,\dots,\infty\}\cong \{0,1,\dots,\infty\}$.
\end{proof}

An analogue of the previous proposition for ordered groups with Riesz interpolation is obtained in \cite[Lemma 14.5]{goodearl}.

\begin{remark} Martin Engbers has let me know that the statement of Proposition 5.2.1 must be amended as follows: Instead of assuming that $S\neq \{0,1,\dots,\infty\}$ we must require that there is no $e\in S$ such that 
$S= \{0,e,2e,\dots,\infty\}$ 
(i.e., $S$ is not ``singly generated"). Indeed, this is the assumption made tacitly in the proof. Observe that this new formulation
also excludes the semigroups $\{0,1,\dots,n,\infty\}$ for all $n\in \N$ (with the obvious order and addition).  
\end{remark}

\begin{bibdiv}
\begin{biblist}

\bib{blackadar-handelmann}{article}{
   author={Blackadar, B.},
   author={Handelman, D.},
   title={Dimension functions and traces on $C\sp{\ast} $-algebras},
   journal={J. Funct. Anal.},
   volume={45},
   date={1982},
   number={3},
   pages={297--340},
}

\bib{radius}{article}{
   author={Blackadar, B.},
   author={Robert, L.},
   author={Tikuisis, A.},
   author={Toms, A.},
   author={Winter, W.},
   title={An algebraic approach to the radius of comparison},
   journal={Trans. Amer. Math. Soc. (to appear)},
}

\bib{cei}{article}{
   author={Coward, K. T.},
   author={Elliott, G. A.},
   author={Ivanescu, C.},
   title={The Cuntz semigroup as an invariant for $C\sp *$-algebras},
   journal={J. Reine Angew. Math.},
   volume={623},
   date={2008},
   pages={161--193},
}

\bib{choquet}{book}{
   author={Choquet, G.},
   title={Lectures on analysis. Vol. II: Representation theory},
   publisher={W. A. Benjamin, Inc., New York-Amsterdam},
   date={1969},
}

\bib{cuntz}{article}{
   author={Cuntz, J.},
   title={Dimension functions on simple $C\sp*$-algebras},
   journal={Math. Ann.},
   volume={233},
   date={1978},
   number={2},
   pages={145--153},
}

\bib{ers}{article}{
    author={Elliott, G. A.},
    author={Robert, L.},
    author={Santiago, L.},
    title={The cone of lower semicontinuous traces on a C$^*$-algebra},
    journal={Amer. J. of Math.},
volume={133},
date={2011},
number={4},
pages={969--1005}
}

\bib{goodearl}{book}{
   author={Goodearl, K. R.},
   title={Partially ordered abelian groups with interpolation},
   series={Mathematical Surveys and Monographs},
   volume={20},
   publisher={American Mathematical Society},
   place={Providence, RI},
   date={1986},
   pages={xxii+336},
}

\bib{jacelon}{article}{
   author={Jacelon, B.},
   title={A simple, self-absorbing, stably projectionless C$^*$-algebra},
   eprint={http://arxiv.org/abs/1006.5397},
   date={2010},
 }

\bib{ortega-perera-rordam}{article}{
   author={Ortega, E.},
   author={Perera, F.},
   author={R\o rdam, M.},
   title={The Corona Factorization property, Stability, and the Cuntz semigroup of a C$^*$-algebra},
   journal={Int. Math. Res. Notices (to appear)}
   eprint={http://arxiv.org/abs/0903.2917},
   date={2011},
}

\bib{remarks}{article}{
author={Robert, L.},
   title={Remarks on $\mathcal R$},
status={in preparation},
date={2011},
 }

\bib{robert-rordam}{article}{
author={Robert, L.},
   author={R{\o}rdam, M.},
   title={Divisibility properties for C*-algebras},
eprint={http://arxiv.org/abs/1106.5523},
   date={2011},
   }

\bib{rordamUHF}{article}{
   author={R{\o}rdam, M.},
   title={On the structure of simple $C\sp *$-algebras tensored with a
   UHF-algebra. II},
   journal={J. Funct. Anal.},
   volume={107},
   date={1992},
   number={2},
   pages={255--269},
}

\bib{rordam-winter}{article}{
   author={R{\o}rdam, M.},
   author={Winter, W.},
   title={The Jiang-Su algebra revisited},
   journal={J. Reine Angew. Math.},
   volume={642},
   date={2010},
   pages={129--155},
}

\bib{tikuisis}{article}{
author={Tikuisis, A.},
title={The Cuntz semigroup of continuous functions into certain simple C*-algebras},
journal={Internat. J. Math. },
volume={22},
date={2011},
number={8},
pages={1051--1087},
}
\end{biblist}
\end{bibdiv}

\end{document}